\def\ep{{\varepsilon}}
\let\epsilon=\varepsilon
\def\R{\mathbb R}
\let\phi=\varphi
\newtheorem{theorem}{\textbf{Theorem}}[section]
\newtheorem{lemma}[theorem]{\textbf{Lemma}}
\newtheorem{proposition}[theorem]{\textbf{Proposition}}
\newtheorem{definition}[theorem]{\textbf{Definition}}
\theoremstyle{remark}
\newtheorem{remark}[theorem]{\textbf{Remark}}
\numberwithin{equation}{section}
\title{Lotka-Volterra competition-diffusion system: the critical competition case}
\date{}
\begin{document}

\maketitle

\begin{center}
{\large\bf  Matthieu Alfaro\footnote{Universit\'e de Rouen Normandie, CNRS, Laboratoire de Math\'ematiques Rapha\"el Salem, Saint-Etienne-du-Rouvray, France \& BioSP, INRAE, 84914, Avignon, France. e-mail: {\tt matthieu.alfaro@univ-rouen.fr}} and Dongyuan Xiao\footnote{IMAG, Univ. Montpellier, CNRS, Montpellier, France. e-mail: {\tt dongyuan.xiao@umontpellier.fr}}}\\
[2ex]
\end{center}


\tableofcontents

\vspace{10pt}

\begin{abstract} 
We consider the reaction-diffusion competition system in the so-called {\it critical competition case}. The associated ODE system then admits infinitely many equilibria, which makes the analysis intricate. We first prove the non-existence of {\it ultimately monotone} traveling waves by applying the phase plane analysis. Next, we study the large time behavior of the solution of the Cauchy problem with a compactly supported initial datum. We not only reveal that the \lq\lq faster'' species excludes the \lq\lq slower'' 
one (with a known {\it spreading speed}), but also provide a sharp description of the profile of the solution, thus shedding light on  a new 
{\it{bump phenomenon}}.\\

\noindent{\underline{Key Words:} competition-diffusion system, traveling wave, Cauchy problem, large time behavior, bump phenomenon.}\\

\noindent{\underline{AMS Subject Classifications:}  35K57 (Reaction-diffusion equations), 35C07 (Traveling wave solutions), 35B40 (Asymptotic behavior of solutions).}
\end{abstract}

\section{Introduction}\label{s:intro}

We consider the Lotka-Volterra competition-diffusion system 
\begin{equation}\label{system}
\left\{
\begin{aligned}
&\partial_tu=u_{xx}+u(1-u-v), & t>0, x\in\mathbb{R},\\
&\partial_tv=dv_{xx}+rv(1-v-u), & t>0, x\in \R,
\end{aligned}
\right.
\end{equation}
which is {\it critical} among systems in the form of \eqref{system ab}. The main difficulty is that the underlying ODE  competition system
\begin{equation}\label{ode system}
\left\{
\begin{aligned}
&u'=u(1-u-v), & t>0,\\
&v'=rv(1-v- u), & t>0,
\end{aligned}
\right.
\end{equation}
admits infinitely many (nontrivial) equilibria: the whole line $u+v=1$. Because of that, there are very few available mathematical results on system \eqref{system}. In the present paper, we fill this gap by proving the non-existence of {\it ultimately monotone} traveling waves, and giving a very precise description of the large time 
behavior of the solution starting from a compactly supported initial datum, thus revealing a new {\it bump phenomenon}.

\medskip

In the absence of one species, system \eqref{system} reduces to the reaction-diffusion equation 
\begin{equation}\label{single kpp}
\partial _t u=du_{xx}+ru(1-u),\quad t>0, x\in\mathbb{R},
\end{equation}
introduced by Fisher \cite{Fisher} and Kolmogorov, Petrovsky and Piskunov 
\cite{KPP}  as a  population genetics model to investigate the propagation of a dominant gene in a homogeneous environment. The KPP equation \eqref{single kpp} has two main properties. Firstly, nonnegative traveling waves, corresponding to the ansatz $u(t,x)=U(x-ct)$ and  solving 
\begin{equation}\label{single tw}
\left\{
\begin{aligned}
&dU''+cU'+rU(1-U)=0\quad \text{ in } \mathbb{R},\\
&U(-\infty)=1,\ U(\infty)=0,
\end{aligned}
\right.
\end{equation}
exist if and only if their speeds $c\geq c^{*}:=2\sqrt{dr}$. Secondly, the solution of \eqref{single kpp} starting from a nonnegative (nontrivial) compactly supported initial datum, satisfies
\begin{equation*}
\begin{aligned}
&\lim_{t\to\infty}\sup_{|x|\ge ct}u(t,x)=0, & \text{ for all } c>c^{*},\\
&\lim_{t\to\infty}\sup_{|x|\le ct}|1-u(t,x)|=0, & \text{ for all } c<c^{*},
\end{aligned}
\end{equation*}
see \cite{Aronson Weinberger}. In other words, the minimal speed $c^*$ of traveling wave solutions corresponds to
the {\it spreading speed} of the solution of  the Cauchy problem with a compactly supported initial datum.

The general Lotka-Volterra competition-diffusion system is written 
\begin{equation}\label{system ab}
\left\{
\begin{aligned}
&\partial_tu=u_{xx}+u(1-u-a v), & t>0, x\in\mathbb{R},\\
&\partial_tv=dv_{xx}+rv(1-v-b u), & t>0, x\in \R.
\end{aligned}
\right.
\end{equation}
Here $u=u(t,x)$ and $v=v(t,x)$ represent the population densities of two 
competing species, $d>0$ and $r>0$ stand for the diffusion rate and intrinsic growth rate of $v$ (while those of $u$ have been normalized), and $a>0$ and $b>0$ represent the strength of $v$ and $u$, respectively, as competitors. The parameters $a$ and $b$ 
determine the behavior of the underlying ODE system (see below) but, once 
fixed, the outcomes for system \eqref{system ab} are highly dependent on the parameters $r$, $d$ and the initial datum.  The situation is therefore very rich and we refer to the works mentioned below for more details and references.

The so-called {\it weak competition case} corresponds to  $a<1$ and $b<1$. Nontrivial solutions of the underlying ODE system tend to the co-existence equilibrium.  For the diffusion system, it was proved by Tang and Fife \cite{Tang} that there exists a  minimal speed $c^\star>0$ such that a monotone traveling wave solution connecting the co-existence equilibrium to the null state $(0,0)$ exists if and only if $c\geq c^\star$, which is comparable to the Fisher-KPP equation mentioned above. Concerning the 
large time behavior of the Cauchy problem, some first estimates were obtained by Lin and Li \cite{Lin Li}. More recently, Liu, Liu and Lam 
\cite{Liu Liu Lam 1, Liu Liu Lam 2} obtained some rather complete results.

The so-called {\it strong competition case} corresponds to  $a>1$ and $b>1$. Since the co-existence equilibrium is unstable and the equilibria $(1,0)$ and $(0,1)$ are both stable for the underlying ODE system, this case 
corresponds to a {\it bistable} situation. For the diffusion system, it was proved by Kan-On \cite{Kan-On}, see also \cite{Gardner}, that there exists a unique traveling wave solution connecting $(1,0)$ to $(0,1)$. The sign of the speed of this wave determines the \lq\lq winner'' between $u$ 
and $v$, and thus is very relevant for  applications, see the review \cite{Girardin}. We refer to \cite{Girardin Nadin}, \cite{Guo Lin} and \cite{Rodrigo} for some results on this delicate issue. As far as the large time behavior of the Cauchy problem is concerned, we refer to the recent work of Carrere \cite{Carrere} revealing the possibility of {\it propagating terraces}, see \cite{pp2,pp}. Very recently, Peng, Wu and Zhou \cite{Peng Wu Zhou} provided  refined estimates of both the spreading speed and the profile of the solution. 

Last, the so-called {\it strong-weak competition case} corresponds to $a<1<b$. Nontrivial solutions of the underlying ODE system tend to the state 
$(1,0)$ meaning that \lq\lq $u$ excludes $v$''. For the diffusion system, 
the traveling wave solutions were constructed by Kan-On \cite{Kan-On monotone}. Concerning the large time behavior of the solution of the Cauchy problem,  
Girardin and Lam \cite{Girardin Lam} recently studied the spreading speed of solutions with an initial datum that is null (or exponentially decaying) on the right half line. They obtained a rather complete understanding of the spreading properties, revealing in particular the possibility of an {\it acceleration phenomenon} (see Appendix of the present paper for more details). 

In the present paper, our goal is to complete the above picture by considering the issues of both traveling wave solutions and the Cauchy problem in the so-called {\it critical competition case} $a=b=1$,  corresponding to system \eqref{system}.

\section{Main results}\label{s:results}

A traveling wave solution of system \eqref{system} is defined as follows.

\begin{definition}[($\alpha,\beta$)-traveling wave]\label{def:tw} Let $0\leq \alpha, \beta \leq 1$ be given  with $\alpha\neq \beta$. Then an $(\alpha,\beta)$-traveling wave solution (or traveling wave if there is no ambiguity) of \eqref{system} is a triplet $(c,U,V)$, where $c\in \R$ is the traveling wave speed and $(U,V)$ two nonnegative  profiles, solving
\begin{equation}\label{tw}
\left\{
\begin{aligned}
&U''+cU'+U(1-U-V)=0,\\
&dV''+cV'+rV(1-V-U)=0,\\
&(U,V)(-\infty)=(\alpha,1-\alpha),\\
&(U,V)(+\infty)=(\beta,1-\beta).
\end{aligned}
\right.
\end{equation}
\end{definition}

As mentioned above, for both the strong competition case and the strong-weak competition case, {\it monotone} traveling waves connecting $(1,0)$ to $(0,1)$ are known to exist. In the critical competition case under consideration, our first main result is that there is no {\it ultimately monotone} traveling wave connecting any two different nonnegative steady states on the line $u+v=1$.

\begin{definition}[Ultimately monotone ($\alpha,\beta$)-traveling wave]\label{def:um-tw} Let $0\leq \alpha, \beta \leq 1$ be given  with $\alpha\neq \beta$. Then an ultimately monotone $(\alpha,\beta)$-traveling wave solution is an $(\alpha,\beta)$-traveling wave for which there further exist $-\infty<z_0\le z^*_0<+\infty$ such that 
\begin{equation}\label{monotone-assumption}
U'(z)V'(z)\neq 0,\quad\text{for all } z\in(-\infty,z_0]\cup[z^*_0,+\infty).
\end{equation}
\end{definition}

\begin{remark}\label{rem:minus-speed} Obviously, if $(c,U(z),V(z))$ is an (ultimately monotone) $(\alpha,\beta)$-traveling wave then $(-c,U(-z),V(-z))$ is a (ultimately monotone) $(\beta,\alpha)$-traveling wave.
\end{remark} 

In other words, we do not require the traveling wave to be monotone on $\R$, but only to be monotone in some neighborhoods of both $-\infty$ and $+\infty$. This reinforces our non-existence result which states as follows.

\begin{theorem}[Non-existence of ultimately monotone traveling waves]\label{th:no tw alpha<1}  
 Let $0\leq \alpha, \beta \leq 1$ be given  with $\alpha\neq \beta$. Then, 
 there is no ultimately monotone $(\alpha,\beta)$-traveling wave for system \eqref{system}.
\end{theorem}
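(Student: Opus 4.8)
The plan is to argue by contradiction using a phase-plane analysis of the traveling-wave ODE system. Suppose $(c,U,V)$ is an ultimately monotone $(\alpha,\beta)$-traveling wave. By Remark~\ref{rem:minus-speed} we may assume, after possibly switching $(\alpha,\beta)\leftrightarrow(\beta,\alpha)$ and $c\leftrightarrow-c$, a normalization such as $\alpha<\beta$ (so that $U$ tends to the smaller value at $-\infty$). Introduce $w:=U+V$. Adding the two profile equations and using $a=b=1$ gives an equation for $w$ in which the $U(1-U-V)$ and $rV(1-V-U)$ reaction terms combine into $(1-w)(U+rV)$; in particular at the two endpoints $w(-\infty)=w(+\infty)=1$. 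The first step is to understand $w$: I expect to show that $w\equiv 1$ is forced, or else to derive a contradiction from the behavior of $w-1$ at $\pm\infty$. One clean route: since $w-1$ satisfies a linear (non-autonomous) second-order equation with the zeroth-order coefficient having a sign tied to $U+rV>0$, a maximum-principle / sliding argument or an explicit integration should pin down $w$. If $d=1$ the equation for $w$ is exactly the linear ODE $w''+cw'+(1-w)(\text{positive})=0$ with $w(\pm\infty)=1$, and one shows $w\equiv1$; the case $d\neq1$ needs more care and is where the analysis gets heavier.

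Granting (or circumventing) the reduction to the invariant line $U+V=1$, the system collapses: on $\{U+V=1\}$ we have $1-U-V=0$, so the profile equations become $U''+cU'=0$ and $dV''+cV'=0$, i.e. $U$ and $V=1-U$ are affine-exponential. With the boundary conditions $U(-\infty)=\alpha$, $U(+\infty)=\beta$, $\alpha\neq\beta$, the only bounded solutions of $U''+cU'=0$ are constants (if $c=0$, linear functions, which are unbounded unless constant; if $c\neq0$, $U=A+Be^{-cz}$, bounded on all of $\R$ only if $B=0$). A constant $U$ cannot satisfy $U(-\infty)\neq U(+\infty)$. This yields the contradiction. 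The genuinely delicate point, and the main obstacle, is therefore the first step when $d\neq1$: establishing that any ultimately monotone traveling wave must live on the line $U+V=1$, or more precisely ruling out solutions whose orbit leaves that line. Here I would exploit the \emph{ultimate monotonicity} hypothesis \eqref{monotone-assumption}: near $\pm\infty$ the linearization of \eqref{tw} at the equilibria $(\alpha,1-\alpha)$ and $(\beta,1-\beta)$ has, because of the degenerate line of equilibria, a zero eigenvalue with eigenvector tangent to $\{u+v=1\}$ and another eigenvalue $-c$ (resp. $-c/d$); the monotone approach to the endpoint selects a specific eigendirection, and one checks that monotone convergence along the non-tangential eigendirection forces a sign on $c$ incompatible with having monotone behavior at \emph{both} ends (this is the usual phase-plane obstruction: $c$ would need to be simultaneously $\geq$ and $\leq$ some threshold, or a strictly monotone trajectory would have to connect two points that the flow cannot connect).

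Concretely, the key steps in order are: (1) set $w=U+V$, derive its equation and boundary conditions; (2) show the orbit lies on $\{w=1\}$ — via maximum principle when $d=1$, and via a more careful ODE/comparison argument (or a Lyapunov-type functional) in general, using ultimate monotonicity to control the decay at $\pm\infty$; (3) reduce \eqref{tw} on this line to $U''+cU'=0$, $dV''+cV'=0$; (4) observe that boundedness on $\R$ forces $U,V$ constant, contradicting $\alpha\neq\beta$; (5) invoke Remark~\ref{rem:minus-speed} to reduce to a single sign of $c$ (or of the ordering of $\alpha,\beta$) throughout, so that no case is missed. I expect steps (1), (3), (4), (5) to be short and essentially computational, while step (2) — ruling out excursions off the equilibrium line for $d\neq1$ — is the crux and the place where a genuine phase-plane argument (tracking the sign of $U',V'$ near the endpoints and the local stable/unstable manifolds of the degenerate equilibria) must be carried out in detail.
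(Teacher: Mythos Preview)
Your treatment of the easy cases is right and matches the paper: for $d=1$ (and similarly for $c=0$) the sum $w=U+V$ satisfies a genuine second-order scalar equation, a maximum-principle argument gives $w\equiv 1$, and then $U''+cU'=0$ forces $U$ constant. This is exactly Propositions~\ref{prop:no standing} and~\ref{prop:d=1}.

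The gap is your step~(2) for $d\neq 1$. Your plan is to show the orbit must live on $\{U+V=1\}$, but when $d\neq 1$ there is \emph{no} closed scalar equation for $w$: adding the two profile equations gives $U''+dV''+cw'+(U+rV)(1-w)=0$, which involves $V''$ separately, so neither a maximum principle nor a Lyapunov functional for $w$ alone is available. Your fallback --- a linearization/eigendirection argument at the degenerate equilibria --- is only sketched, and because the line $u+v=1$ consists entirely of equilibria the linearization has a zero eigenvalue and a center direction, so the ``monotone approach selects an eigendirection'' heuristic does not pin things down in the way you suggest.

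The paper does \emph{not} attempt to prove $w\equiv 1$. Instead it works directly with $1-w$ near $-\infty$ (for $c>0$; the case $c<0$ is symmetric via Remark~\ref{rem:minus-speed}). First, ultimate monotonicity is sharpened to $U'V'<0$ near $-\infty$ (Lemma~\ref{lm:1}). Then one shows $1-w$ can be neither ultimately $\geq 0$ nor ultimately $\leq 0$: in either case, plugging the sign of $1-w$ into the equations for $U'$ and $V'$ contradicts the monotone approach to $(\alpha,1-\alpha)$. Hence $1-w$ oscillates, and one can pick a local maximum $z_1$ with $(1-w)(z_1)>0$ and a local minimum $z_2$ with $(1-w)(z_2)<0$. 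At such points $U'+V'=0$, whereas the expression for $(1-w)''$ involves $cU'+\tfrac{c}{d}V'$; since $d\neq 1$ and $V'$ has a fixed sign on the tail, this forces $(1-w)''$ to have the wrong sign at either $z_1$ or $z_2$, a contradiction. This concrete second-derivative test at oscillation extrema is the missing idea in your outline.
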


The above theorem is proved in Section \ref{s:no tw}. The starting point consists in transforming system \eqref{tw} into 
a first order system of four ODEs. Then, by a phase plane analysis, we prove that $U+V-1$ has to \lq\lq oscillate''  in a neighborhood of  $-\infty$ or $+\infty$, from which we get a contradiction.

\begin{remark}\label{rm: on tw TH}
As easily seen from the proof, to exclude the existence of a traveling wave with speed $c>0$, $c<0$, it is enough to assume that \eqref{monotone-assumption} holds in a neighborhood of $-\infty$, $+\infty$ respectively. In other words, there is no traveling wave for which the {\it invading state} is monotonically reached. The existence of a traveling wave for which the invading state is not monotonically reached remains an open issue. Last, as seen from subsection \ref{ss:apriori}, the non existence of standing waves ($c=0$) does not require any ultimately monotonicity assumption. 
\end{remark}

\medskip

Our second main focus is concerned with the large time behavior of the solution of system \eqref{system} starting from a nonnegative (nontrivial) compactly supported initial datum. In both the strong competition case \cite{Carrere}, \cite{Peng Wu Zhou}, and the strong-weak competition case \cite{Girardin Lam}, the monotone traveling wave solutions of the entire system play 
a key role in studying the large time behavior of the solution of the Cauchy problem. However, for the critical competition case, such traveling wave solutions do not exist.

In order to state our result, we define the (minimal) Fisher-KPP  traveling wave solution $(c_u,U_{KPP})$ as
\begin{equation}\label{def: single u tw}
c_u:=2, \quad \left\{
\begin{aligned}
&U_{KPP}''+c_uU_{KPP}'+U_{KPP}(1-U_{KPP})=0,\\
&U_{KPP}(-\infty)=1,\ U_{KPP}(\infty)=0,
\end{aligned}
\right.
\end{equation}
and, similarly, $(c_v,V_{KPP})$ as
\begin{equation}\label{def: single v tw}
c_v:=2\sqrt{dr}, \quad \left\{
\begin{aligned}
&dV_{KPP}''+c_vV_{KPP}'+rV_{KPP}(1-V_{KPP})=0,\\
&V_{KPP}(-\infty)=1,\ V_{KPP}(\infty)=0.
\end{aligned}
\right.
\end{equation}
Let us recall that both $U_{KPP}$ and $V_{KPP}$ are uniquely defined \lq\lq up to a shift''. Note that, $c_u$ (resp. $c_v$) also represents the spreading speed of $u$ (resp. $v$) in the absence of $v$ (resp. $u$).

\begin{theorem}[Propagation phenomenon]\label{th: profile}
Let $(u,v)=(u,v)(t,x)$ be the solution of system \eqref{system} starting from an initial datum $(u_0,v_0)=(u_0,v_0)(x)$ satisfying 
\begin{equation}\label{initial data}
\text{ $u_0$ and $v_0$ are continuous, nontrivial, compactly supported, and $0\leq u_0, v_0\leq 1$}.
\end{equation}
Then the following holds.
\begin{itemize}
    \item[$(i)$] Assume $dr>1$ (i.e. $c_v>c_u$). Then
\begin{equation}\label{profile of the solution dr>1 large x}
\lim_{t\to\infty}\left(\sup_{x\in \R} \left |v(t,x)-V_{KPP}\left(\vert x\vert-c_vt+\frac{3d}{c_v}\ln t+\eta_*(t)\right)\right|+\sup_{x\in \R}\ u(t,x)\right)=0,
\end{equation}
where $\eta_*$ is a bounded function on $[0,\infty)$.  
    \item[$(ii)$]  Assume $dr<1$ (i.e. $c_v<c_u$). Then
\begin{equation}\label{profile of the solution dr<1}
\lim_{t\to\infty}\left(\sup_{x\in \R} \left |u(t,x)-U_{KPP}\left(\vert x\vert-c_ut+\frac{3}{c_u}\ln t+\eta_{**}(t)\right)\right |+\sup_{x\in \R}\ v(t,x)\right)=0,
\end{equation}
where $\eta_{**}$ is a bounded function on $[0,\infty)$. 
\end{itemize}
\end{theorem}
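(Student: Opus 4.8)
The plan is to squeeze $(u,v)$ between sub- and supersolutions built out of scalar Fisher--KPP fronts, exploiting that in the critical case each component satisfies a \emph{scalar} KPP inequality, namely $\partial_t u\le u_{xx}+u(1-u)$ and $\partial_t v\le d v_{xx}+r v(1-v)$ (because $u,v\ge0$), while the competition only contributes \emph{lower order} corrections in the regions that govern the spreading. I describe case $(i)$, $dr>1$, so that $v$ is the faster species and $c_v>c_u$; case $(ii)$ is symmetric, exchanging the roles of $u$ and $v$. The heuristic is that a fixed point $x_0$ is reached by the $v$-front (at time $\approx|x_0|/c_v$) strictly before the $u$-front (at time $\approx|x_0|/c_u$), so $v$ rises to $1$ at $x_0$ while $u$ is still super-exponentially small there, after which the almost-filled $v$ prevents $u$ from ever building up; thus $u$ is negligible precisely where the dynamics of the $v$-front take place. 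After a preliminary comparison argument (for $d=1$ the quantity $W:=u+v$ solves a linear equation for which the constants $\ge1$ are supersolutions; in general one bounds $\limsup_{t\to\infty}\sup_x(u+v)$ by a spatially homogeneous logistic supersolution) one may, up to a time shift and an arbitrarily small error, assume $u_0+v_0\le1$.

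First I would record the scalar upper bounds. Let $\overline u$ (resp.\ $\overline v$) solve the KPP equation with diffusion $1$, growth $1$, datum $u_0$ (resp.\ diffusion $d$, growth $r$, datum $v_0$); then $0\le u\le\overline u$ and $0\le v\le\overline v$ by comparison. The sharp description of the KPP Cauchy problem (Uchiyama; Bramson; Lau; Hamel--Nolen--Roquejoffre--Ryzhik) gives $\overline v(t,x)\le V_{KPP}\big(|x|-c_vt+\tfrac{3d}{c_v}\ln t-C_1\big)$ and $\overline u(t,x)\le U_{KPP}\big(|x|-c_ut+\tfrac{3}{c_u}\ln t-C_2\big)$, and $\overline u(t,x)\le Ce^{-\mu t}$ on $\{|x|\ge(c_u+\delta)t\}$ for every $\delta>0$ and a suitable $\mu=\mu(\delta)>0$. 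The first bound already gives the upper half of \eqref{profile of the solution dr>1 large x} and the last one gives $u\to0$ uniformly on $\{|x|\ge ct\}$, $c>c_u$. For a matching lower bound near the leading edge of $v$, note $\partial_t v\ge d v_{xx}+r v\big(1-v-\overline u(t,x)\big)$, so $v\ge\underline v$ where $\underline v$ solves this scalar non-autonomous equation with datum $v_0$; on $\{|x|\ge(c_u+\delta)t\}$ the coefficient $\overline u$ is exponentially small, so there $\underline v$ is a decaying perturbation of the scalar KPP solution, and writing $\underline v=e^{-\theta(t)}z$ with $\theta(t)=\int_t^\infty rCe^{-\mu s}\,ds$ bounded one checks that only the $O(1)$ part of the Bramson shift is affected, whence $\underline v(t,x)\ge V_{KPP}\big(|x|-c_vt+\tfrac{3d}{c_v}\ln t+C_3\big)$ near $|x|\approx c_vt$. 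Together these pin $v$ down, up to a bounded shift, in the front region, and more generally give $v\to1$ uniformly on $\{(c_u+\delta)t\le|x|\le(c_v-\delta)t\}$.

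It remains to handle the bulk, i.e.\ to show $u\to0$ and $v\to1$ uniformly on $\{|x|\le(c_v-\delta)t\}$ for every $\delta>0$; combined with the above this yields $\sup_x u(t,x)\to0$, after which $v$ solves a KPP equation perturbed by the uniformly small term $-rvu$ and so shares the large-time asymptotics of $\overline v$ up to a bounded shift, giving \eqref{profile of the solution dr>1 large x} with $\eta_*$ bounded. For the bulk I would first use $W=u+v$, which (recalling $W\le1$) satisfies $\partial_t W=dW_{xx}-(d-1)u_{xx}+(u+rv)(1-W)\ge dW_{xx}-(d-1)u_{xx}+\kappa W(1-W)$ with $\kappa:=\min(1,r)$; after absorbing the $u_{xx}$ term by parabolic estimates when $d\ne1$, $W$ dominates a scalar KPP front, so $W\to1$ on $\{|x|\le\rho_0 t\}$ for some $\rho_0>0$. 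On that region $\partial_t u=u_{xx}+u(1-W)$ with $0\le1-W\le\varepsilon(t)$, and since $1-W$ obeys $\partial_t(1-W)\le(1-W)_{xx}-\nu(1-W)$ there (transverse stability of the line $u+v=1$, $\nu>0$), $\varepsilon(t)$ decays exponentially, so $u$ satisfies an equation exponentially close to the heat equation and $\|u(t)\|_{L^\infty(|x|\le\rho_0 t)}\to0$ by the diffusion of its fixed mass; hence $v=W-u\to1$ there too. Finally I would run a continuation in the spreading speed: assuming $u\to0$ and $v\to1$ on $\{|x|\le\rho t\}$, I feed the boundary information $u(t,\pm\rho t)\to0$, $v(t,\pm\rho t)\to1$ into the $v$- and $W$-equations on $\{|x|\ge\rho t\}$; when $\rho\ge c_u+\delta$ the competitor is already exponentially small there and the scalar KPP spreading at speed $\approx c_v$ carries $v$, hence $W$, hence (by the absorbed-heat argument) the smallness of $u$, out to $\{|x|\le(c_v-\delta)t\}$; the remaining range $\rho\in[\rho_0,c_u+\delta)$ is bridged by iterating $\rho_{n+1}=\rho_n+\eta(\rho_n)$, updating $W$ and $u$ at each step with the help of the timing heuristic.

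The main obstacle is exactly this bridging, i.e.\ the intermediate zone $|x|\approx c_ut$: there the scalar bound $\overline u$ for $u$ is of order $1$, the scalar lower bound $\underline v$ for $v$ is crushed by $\overline u$, and one must genuinely use the coupled structure --- the fact that $W=u+v$ is nevertheless pushed up to $1$ by the reaction $(u+rv)(1-W)$ (whose coefficient is bounded below wherever $u$ is substantial), after which $u$ is forced down by diffusion of its fixed, finite mass against the almost constant profile $v\approx1$ arriving from the faster front. Making all the convergences uniform on the expanding, moving domains, and ruling out a residual bump of $u$ in this zone, is the delicate part; a secondary, routine point is to check that the exponentially small corrections from the competitor shift the Bramson delay $\tfrac{3d}{c_v}\ln t$ only at the $O(1)$ level, which is what makes $\eta_*$ bounded.
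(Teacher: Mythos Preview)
Your upper bounds via the scalar KPP equations, and the lower bound for $v$ in the leading-edge region $|x|\ge(c_u+\delta)t$ using that $u\le\overline u$ is exponentially small there, are correct and match what the paper does. The genuine gap is in the bulk, exactly where you say it is, and your proposed treatment of it does not work.

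The equation you write for $W=u+v$, namely $\partial_tW=dW_{xx}-(d-1)u_{xx}+(u+rv)(1-W)$, cannot be handled by ``absorbing the $u_{xx}$ term by parabolic estimates'': this is a second-order term of the same strength as the diffusion, with no smallness, and the system is not cooperative in $(W,u)$. For the same reason your ``transverse stability'' inequality $\partial_t(1-W)\le(1-W)_{xx}-\nu(1-W)$ is simply false when $d\ne1$: the cross term $(d-1)u_{xx}$ has no sign. Consequently the claim that $W\to1$ on $\{|x|\le\rho_0 t\}$, and then that $u$ obeys a heat-like equation there, is not established; and the subsequent ``continuation in $\rho$'' has no input to iterate on. (Incidentally, $W\le1$ is not known a priori when $d\ne1$, so the preliminary reduction you sketch also needs work.)

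The paper avoids the $W$-equation altogether. For the intermediate zone $c_ut<|x|<c_vt$ it compares the critical system with a nearby \emph{strong--weak} system ($a<1<b$), for which $(u^*,v^*)$ is a super-solution of the critical system and the Girardin--Lam spreading result gives $v\ge v^*\to1$ there. For the inner zone $|x|\le c_1t$ with $c_u<c_1$, it constructs an explicit super-solution pair $(\tilde U,\tilde V)$ with $\tilde U(t,x)=t^{\alpha}s(t,x)$, where $s$ solves a heat equation, and $\tilde V=V_1(\xi_+)+V_1(\xi_-)-1-\tilde U$ with $V_1$ a KPP front of speed slightly below $c_v$. The algebraic point is that the choice $\tilde U+\tilde V\approx1$ makes the reaction terms in both equations cancel to the right sign, so one checks $N_1[\tilde U,\tilde V]\ge0$ and $N_2[\tilde U,\tilde V]\le0$ directly; the remaining freedom ($B_1$ large, $q$ small) is used to fit the boundary and initial orderings on the expanding domain. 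This yields $u\le\tilde U\to0$ uniformly and $v\ge\tilde V\to1$ on $|x|\le c_1t$, and in fact the exponential smallness of $u$ on every $\{|x|\ge\varepsilon t\}$ needed for the Bramson argument. Your heuristic that in the bulk $u$ behaves like a heat solution is exactly right --- but it is \emph{imposed} through this super-solution, not deduced from a $W$-inequality.
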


The above theorem is proved in Section \ref{s:cauchy}.
Let us briefly comment on Theorem \ref{th: profile}. First of all, for the case $dr>1$ (or $dr<1$), the \lq\lq faster species'', namely $v$, excludes the \lq\lq slower one'', namely $u$, and imposes its spreading 
speed, see \eqref{profile of the solution dr>1 large x}. Furthermore, we find that the profile of the solution uniformly converges to the corresponding minimal KPP traveling wave solution, and this up to an identified logarithmic Bramson correction,  see \eqref{profile of the solution dr>1 large x} again. 

On the other hand, for the case $dr=1$, it is difficult to decide which species leads the invading front, and the behavior of the solution is highly depending on both the parameters and the shape of the initial datum. For the case $d=r=1$, for any initial datum satisfying \eqref{initial data}, a coexistence phenomenon  happens. However, for the case $dr=1$ but $d\neq 1$, the behavior of the solution is much more intricate. Indeed, in this case,  the logarithmic  phase drifts for $u$ and $v$ are different 
and there is a  narrow region of width $O(\ln t)$ where the behaviors of $u$ and $v$ are difficult to \lq\lq anticipate''. This may cause  some subtle phenomena (both species driving the front or one excluding the other) and makes the mathematical analysis quite involved. We hope to address these issues in a future work.

Our second result on the Cauchy problem deals with the region \lq\lq $\vert x \vert \leq \ep _* t$'', where the profile of the solution is more of the \lq\lq Heat equation type''.

\begin{theorem}[Bump phenomenon]\label{th: bump}
Let $(u,v)=(u,v)(t,x)$ be the solution of system \eqref{system} starting from an initial datum $(u_0,v_0)=(u_0,v_0)(x)$ satisfying \eqref{initial data}. Denote
$$
k^*:=\min\left(\frac{1}{2d},\frac{d}{2}\right), \quad d^*:=\max(1,d).
$$
Then the following holds.
\begin{itemize}
	\item[$(i)$] Assume $dr>1$ (i.e. $c_v>c_u$). Then
for $\ep_*>0$ small enough and $0<\theta<\frac{1}{2}$,  there exist $C_2>C_1>0$ and $T>0$ such that both
\begin{equation}\label{profile  u the solution dr>1 small x}
C_1t^{-\frac{1}{2}}e^{-\frac{x^2}{4t}}\le u(t,x)\le C_2t^{-k^*}e^{-\frac{x^2}{4d^*t}},\ 
\end{equation}
\begin{equation}\label{profile of v the solution dr>1 small x}
\max\left(C_1t^{-\frac{1}{2}}e^{-\frac{x^2}{4t}}-t^{-(1+\theta)},0\right)\le 1- v(t,x)\le C_2t^{-k^*}e^{-\frac{x^2}{4d^*t}},
\end{equation}
hold for any $t\geq T$, $\vert x\vert \leq \ep_*t$. 
    \item[$(ii)$]  Assume $dr<1$ (i.e. $c_v<c_u$). Then
for $\ep_{**}>0$ small enough and $0<\theta<\frac{1}{2}$, there exist $C_4>C_3>0$ and  $T>0$ such that both
\begin{equation}\label{profile  u the solution dr<1 small x}
C_3t^{-\frac{1}{2}}e^{-\frac{x^2}{4t}}\le v(t,x)\le C_4t^{-k^*}e^{-\frac{x^2}{4d^*t}},
\end{equation}
\begin{equation}\label{profile of v the solution dr<1 small x}
\max\left(C_3t^{-\frac{1}{2}}e^{-\frac{x^2}{4t}}-t^{-(1+\theta)},0\right)\le 1-u(t,x)\le C_4t^{-k^*}e^{-\frac{x^2}{4d^*t}},
\end{equation}
hold for any $t\geq T$, $\vert x\vert \leq \ep_{**}t$.
\end{itemize}
\end{theorem}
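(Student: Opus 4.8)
\textbf{Proof proposal for Theorem \ref{th: bump}.}

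The plan is to treat the two cases symmetrically and focus on case $(i)$, $dr>1$; case $(ii)$ follows by swapping the roles of $u$ and $v$ together with the relabelling $(1,d,r)\leftrightarrow(d, 1/d, dr)$ of the diffusion/growth parameters. The key point is that in the inner region $\vert x\vert\le\ep_*t$, Theorem \ref{th: profile}$(i)$ already tells us that $u$ is uniformly small and $v$ is uniformly close to $1$, so the competition terms act essentially as a Fisher-KPP type reaction for $u$ around the state $u=0$ (with effective linear growth rate $1-v\approx 0^+$) and, writing $w:=1-v$, as a damped equation for $w$ around $w=0$. First I would make these heuristics rigorous by deriving, from Theorem \ref{th: profile}$(i)$ and a bootstrap on a slightly larger region, the differential inequalities $\partial_t u\le u_{xx}+u(1-v)\le u_{xx}$ (upper bound, using $v\le 1$ is false in general, so rather $\partial_t u\le u_{xx}+ \delta(t) u$ with $\delta(t)\to 0$) and the matching lower bound $\partial_t u\ge u_{xx}+u(1-u-v)\ge u_{xx}-Cu^2-C\ep_* u$ in the cone. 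The upper bound $u\le C_2 t^{-k^*}e^{-x^2/(4d^*t)}$ should then come from a Gaussian supersolution: one checks that $(t+\tau)^{-k^*}e^{-x^2/(4d^*(t+\tau))}$ is a supersolution of $\partial_t U= d^* U_{xx}$ for the right constant, hence (since $d^*=\max(1,d)\ge 1$) of the $u$-equation linearised at $u=0$, and the exponent $k^*=\min(1/(2d),d/2)$ is exactly what is needed to also dominate the initial trace coming from the KPP tail of $u$ at time $T$ on the boundary $\vert x\vert=\ep_* t$.

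Second I would establish the Gaussian lower bounds $u(t,x)\ge C_1 t^{-1/2}e^{-x^2/(4t)}$ and $1-v(t,x)\ge C_1 t^{-1/2}e^{-x^2/(4t)}-t^{-(1+\theta)}$. For $u$, the natural device is a subsolution of the form $\kappa t^{-1/2}e^{-x^2/(4t)}$ for the pure heat equation $\partial_t u=u_{xx}$: this is a solution of the heat equation, and since in the cone $\partial_t u\ge u_{xx}-C(u^2+\ep_* u)\ge u_{xx}- C'(\text{small})u$, a slightly modified self-similar profile $\kappa t^{-1/2-\gamma}e^{-x^2/(4t)}$ with a tiny $\gamma>0$, or a time-shifted version, absorbs the error; the strong maximum principle together with the fact that $u(T,\cdot)$ is continuous, nonnegative and nontrivial gives a strictly positive Gaussian lower bound at the initial time $T$ of the comparison, which then propagates. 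For $w=1-v$, note $\partial_t w= d w_{xx}+rv(w - u)$; since $v$ is close to $1$ and $u$ is at most of order $t^{-k^*}$, this is a perturbation of $\partial_t w = d w_{xx} + r w$ with a source term of size $O(t^{-k^*})$; comparing with $u$ (which also appears as a sink) one expects $w$ to track $u$ up to a correction controlled by $\int_T^t$ of the source, which after the self-similar rescaling contributes the $t^{-(1+\theta)}$ defect — here I would choose $\ep_*$ small enough and $T$ large enough so that the accumulated error genuinely has this size for any prescribed $\theta<1/2$.

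The main obstacle I anticipate is the interface between the inner region $\vert x\vert\le\ep_* t$ and the outer KPP region handled in Theorem \ref{th: profile}: the comparison arguments above require control of $u$ and $w$ on the moving boundary $\vert x\vert=\ep_* t$ and at the initial time $t=T$, and these boundary data must be shown to lie below the Gaussian supersolutions and above the Gaussian subsolutions uniformly in $t$. This is precisely where the value $k^*=\min(1/(2d),d/2)$ and the restriction $\theta<1/2$ enter: on $\vert x\vert=\ep_* t$ one has $t^{-k^*}e^{-\ep_*^2 t/(4d^*)}$ which is super-exponentially small, so any polynomially-decaying outer bound from Theorem \ref{th: profile} (or from the known Bramson-corrected KPP tail, which decays like $e^{-(\vert x\vert - c_u t)}$ up to polynomial factors) fits underneath, but making the constants $C_1,C_2$ consistent on both the spatial boundary and the time slice $t=T$ requires a careful two-sided barrier construction. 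A secondary technical difficulty is that the reaction for $u$ in the cone is only approximately linear — the quadratic term $-u^2$ and the cross term governed by $\ep_*$ must be reabsorbed — so one genuinely needs the smallness of $u$ from Theorem \ref{th: profile} and the freedom to shrink $\ep_*$; I would carry out the sub/supersolution verifications with an explicit small parameter and only at the end fix $\ep_*$, $T$, and the constants.
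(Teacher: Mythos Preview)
Your plan to decouple the system---first extract qualitative smallness of $u$ and $1-v$ from Theorem~\ref{th: profile}, then run separate scalar comparisons in the cone $|x|\le\ep_*t$---has a real gap. Theorem~\ref{th: profile} gives only $\sup u\to 0$ and $\sup|1-v|\to 0$ with \emph{no rate}, so your inequality $\partial_t u\le u_{xx}+\delta(t)u$ has $\delta(t)\to 0$ but, a priori, $\int^\infty\delta=+\infty$; comparison then yields at best $u\le C\exp\bigl(\int_T^t\delta\bigr)\cdot(\text{heat kernel})$, which says nothing. A bootstrap does not close this loop: each pass still carries a non-integrable error unless you already know a polynomial rate on $1-u-v$, which is exactly the conclusion you are after. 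The same obstruction blocks your treatment of $w=1-v$. Your remarks about boundary matching are also off: on $|x|=\ep_*t$ with $\ep_*<c_u$ the Bramson KPP tail of $u$ gives no decay (you are \emph{inside} the $u$-front), and Theorem~\ref{th: profile} again provides no quantitative boundary datum.

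The paper proceeds very differently: it builds \emph{coupled} sub- and super-solution pairs for the full system on a much wider expanding domain (up to $|x|<c_1t$ with $c_u<c_1<c_v$ for the super, and $|x|<c_2t$ with $c_2>c_v$ for the sub), with the structural ansatz $\tilde U+\tilde V=V_1(x-c_v^*t)+V_1(-x-c_v^*t)-1$ (resp.\ $U+V=V_2(\xi_+)+V_2(\xi_-)-1+t^{-(1+\theta)}$). This forces $1-\tilde U-\tilde V$ to be \emph{exponentially} small in the cone by construction, so the reaction term $\tilde U(1-\tilde U-\tilde V)$ is harmless without any a priori input from Theorem~\ref{th: profile}. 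The exponent $k^*=\min(\tfrac1{2d},\tfrac d2)$ does not come from boundary matching as you suggest; it is the unique polynomial weight $t^{(1-d)/2}$ (resp.\ $t^{(d-1)/(2d)}$) on a heat solution $s$ for which the diffusion-mismatch terms in $N_1[\tilde U,\tilde V]\ge 0$ and $N_2[\tilde U,\tilde V]\le 0$ cancel via $\partial_t s/s\ge -\tfrac1{2t}$. Likewise the correction $t^{-(1+\theta)}$ is inserted directly into the $V$-component of the sub-solution, and the restriction $\theta<\tfrac12$ is dictated by a term $(d-1)(g\partial_t f-\partial_t h)=O(t^{-3/2})$ that must be dominated by $r t^{-(1+\theta)}$ in the verification of $N_2[U,V]\ge 0$.
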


The  above theorem is proved in  Section \ref{s:bump}. Let us briefly comment on Theorem \ref{th: bump}, say in the case  $dr>1$. As revealed by \eqref{profile  u the solution dr>1 small x} and \eqref{profile of v the solution dr>1 small x}, the solution converges  to $(0,1)$ {\it exponentially} in regions of the form $\vert x\vert\ge \ep t$ with $\ep>0$, but only  {\it algebraically} in \lq\lq sublinear regions'' of the form $\vert x\vert \lesssim \sqrt{t}$. We call this a {\it{bump  phenomenon}}, see Figure \ref{figure:bump}. 
Such a phenomenon does not occur in the strong competition case \cite{Peng Wu Zhou}. As far as  the strong-weak competition case is concerned, the 
results as stated in  \cite{Girardin Lam} are not sufficient to decide if it occurs or not, but we assert it does not, as proved in the forthcoming work \cite{Xiao-Zhou}.  Therefore, the present paper is the first one revealing a bump phenomenon in the context of competition-diffusion systems. We believe such a phenomenon is reserved  for the critical case $a=b=1$, and is rare to happen in the context of homogeneous reaction-diffusion equations.

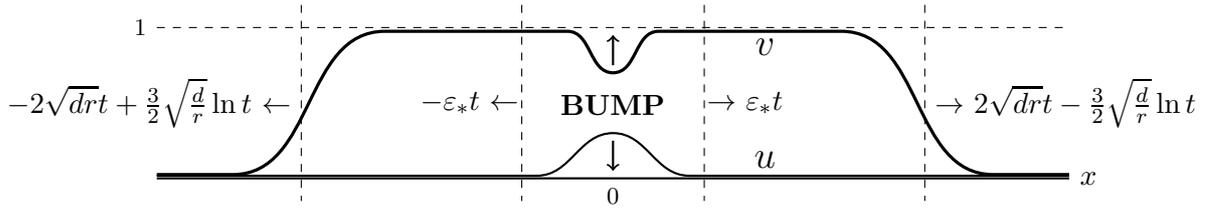
\begin{figure}
\begin{center}
\begin{tikzpicture}[scale = 1]
\draw[thick](-6,0) -- (6,0) node[right] {$x$};
\draw[->, thick] (0,1.5)--(0,1.9);
\draw[->, thick] (0,0.5)--(0,0.1);
\draw[dashed] (-6,2) -- (6,2);
\draw[thick]  (-6,0.035)--(-1,0.035)  to [out=0,in=180]  (0,0.6) to [out=0,in=180] (1,0.035)--(6,0.035);
\draw[very thick] (-6,0.05)--(-5,0.05) to [out=0, in=180]  (-3,1.95)--(-0.6,1.95) to [out=0, in=180] (0,1.4) to [out=0,in=180] (0.6,1.95)--(3,1.95) to [out=0,in=180] (5,0.05)--(6,0.05);
\draw[dashed] (4.1,2.3)--(4.1,-0.3);
\draw[dashed] (-4.1,2.3)--(-4.1,-0.3);
\node[left] at (-6,2) {\footnotesize{1}};
\node[below] at (0,0) {\footnotesize{0}};
\node[above] at (2,1.5) {\Large{$v$}};
\node[above] at (2,0) {\Large{$u$}};
\node[left] at (-4.1,1) {$-2\sqrt{dr}t+\frac{3}{2}\sqrt{\frac d r}\ln t$\ \textbf{$\leftarrow$}};
\node[right] at (4.1,1) {\textbf{$\rightarrow$}\ $2\sqrt{dr}t-\frac{3}{2}\sqrt{\frac  d r}\ln t$};
\draw[dashed] (1.2,2.3)--(1.2,-0.3);
\draw[dashed] (-1.2,2.3)--(-1.2,-0.3);
\node[above] at (0,0.7) {\bf{BUMP}};
\node[left] at (-1.1,1) {$-\ep_*t$\ \textbf{$\leftarrow$}};
\node[right] at (1.1,1) {\textbf{$\rightarrow$}\ $\ep_*t$};
\end{tikzpicture}
\caption{The asymptotic profile of the solution, in the
case  $dr>1$.}
\label{figure:bump}
\end{center}
\end{figure}

\section{Non-existence of traveling waves}\label{s:no tw}

This section is devoted to the proof of Theorem \ref{th:no tw alpha<1} on 
the non-existence of ultimately monotone traveling waves for system \eqref{system}. 

\subsection{Preliminary results and observations}\label{ss:apriori}

In this subsection, the ultimately monotonicity assumption is not required, and thus a traveling wave 
is understood in the sense of Definition \ref{def:tw}.  We start with the following {\it a priori} estimates for a traveling wave.

\begin{lemma}\label{lem:elliptic} Any traveling wave has to satisfy $0<U<1$, $0<V<1$, and $U'(\pm \infty)=V'(\pm \infty)=0$.
\end{lemma}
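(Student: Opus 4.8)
The plan is to argue in two stages: first establish the strict two-sided bounds $0<U<1$ and $0<V<1$ on all of $\R$, then deduce the vanishing of the derivatives at $\pm\infty$ from elliptic regularity and the fact that the profiles converge to constants.

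For the bounds, I would first note that each profile solves a second-order linear ODE with bounded coefficients (e.g. $U'' + cU' + U(1-U-V)=0$, where $1-U-V$ is bounded once we know $U,V$ are bounded, which follows since the limits at $\pm\infty$ are the finite values $\alpha,\beta\in[0,1]$ and $1-\alpha,1-\beta$). Nonnegativity of $U$ and $V$ is part of Definition \ref{def:tw}; strict positivity then follows from the strong maximum principle applied to the linear operator $\mathcal{L}U := U'' + cU' + (1-U-V)U$: if $U$ vanished at some interior point $z_0$, it would attain an interior minimum there, forcing $U\equiv 0$ and contradicting $U(-\infty)=\alpha$ or $U(+\infty)=\beta$ being nonzero — and if all four boundary values of $U$ were zero, i.e. $\alpha=\beta=0$, that is excluded since $\alpha\neq\beta$. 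The same argument applies to $V$. For the upper bound $U<1$: set $w:=1-U-V$, the quantity whose sign governs everything. One checks that $U<1$ and $V<1$ cannot both fail; more cleanly, I would show $U+V$ cannot exceed $1$ by much and then bootstrap. Actually the cleanest route is: suppose $\sup_\R U \geq 1$. Since $U(\pm\infty)\in\{\alpha,\beta\}\le 1$, either $U$ attains an interior maximum $\geq 1$ at some $z_0$, where $U'(z_0)=0$, $U''(z_0)\leq 0$, hence $0 = U''(z_0) + U(z_0)(1-U(z_0)-V(z_0)) \leq U(z_0)(1-U(z_0)-V(z_0))$, giving $1-U(z_0)-V(z_0)\geq 0$, i.e. $V(z_0)\leq 1-U(z_0)\leq 0$, contradicting $V>0$; or $\sup_\R U = 1$ is attained only in the limit, in which case $\alpha=1$ or $\beta=1$, say $\alpha=1$, forcing $1-\alpha=0$ as the limit of $V$ at $-\infty$, again contradicting $V>0$ unless one handles the boundary limit carefully via a sliding/translation argument. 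The symmetric argument gives $V<1$.

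For the derivative limits, having established $0<U,V<1$, the coefficient $U(1-U-V)$ is bounded, so by interior elliptic (ODE) estimates $U',U''$ are bounded on $\R$; differentiating the equation, $U'''$ is bounded as well, so $U'$ is uniformly continuous. Since $U(z)$ converges to finite limits as $z\to\pm\infty$, a standard Barbalat-type argument (a uniformly continuous function whose integral — here $U$ itself is not the integral, but one uses that $U$ has limits and $U'$ is uniformly continuous) forces $U'(\pm\infty)=0$: if $U'(z_k)\to\ell\neq 0$ along some sequence $z_k\to+\infty$, uniform continuity of $U'$ would make $U$ strictly monotone by a definite amount on intervals of fixed length around $z_k$, contradicting convergence of $U$. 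Then from the equation $U''(z) = -cU'(z) - U(z)(1-U(z)-V(z))$ and the known limits of $U,V,U'$, we get $U''(\pm\infty)=0$ too (though only $U'(\pm\infty)=0$ is claimed). The identical reasoning applies to $V$.

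The main obstacle I anticipate is the upper bound step at the boundary: ruling out the degenerate possibility that a profile touches $0$ or $1$ only asymptotically (i.e.\ that $\alpha$ or $\beta$ equals $0$ or $1$) requires slightly more than the naive maximum principle, since the relevant extremum escapes to $\pm\infty$. The fix is to use the constraint $\alpha\neq\beta$ together with the pair structure $(U,V)(\pm\infty)=(\alpha,1-\alpha),(\beta,1-\beta)$: if $\alpha=1$ then the left limit of $V$ is $0$, and strict positivity of $V$ combined with $V(-\infty)=0$ and $V>0$ is not itself contradictory — so here one genuinely needs the strong maximum principle / Hopf lemma argument showing a nonnegative solution of the linear equation for $V$ that limits to $0$ cannot stay positive unless\ldots actually it can, like a KPP tail. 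So the real content is showing such a configuration is incompatible with $U$ also solving its equation with $U(-\infty)=1$; this is a linearization/phase-plane observation near the equilibrium, and I would expect the authors to invoke it directly or defer it to the phase-plane setup of the next subsection. I would flag this as the one place where "routine maximum principle" is not quite enough and a short dedicated argument is needed.
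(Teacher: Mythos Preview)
Your positivity argument and your Barbalat-type argument for $U'(\pm\infty)=V'(\pm\infty)=0$ are both correct. The paper handles the derivative limits by a different, translation--compactness route: for any sequence $z_n\to+\infty$ with $U'(z_n)\to\ell$, the shifts $(U,V)(\cdot+z_n)$ are bounded in $W^{2,p}_{\mathrm{loc}}$ by interior elliptic estimates, hence precompact in $C^1_{\mathrm{loc}}$, and any limit must be the constant $\beta$ since $U(+\infty)=\beta$; thus $\ell=0$. Your Barbalat approach is equally valid and slightly more elementary; the paper's approach is the one that transfers verbatim to PDE settings.

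The ``main obstacle'' you flag for the upper bound is not real, and comes from aiming at the wrong target. You set out to contradict $\sup_\R U\ge 1$, i.e.\ to prove $\sup_\R U<1$; but the lemma only asserts $U(z)<1$ for every $z\in\R$, which is strictly weaker and is all that is true when $\alpha=1$ or $\beta=1$ (then $\sup_\R U=1$). So your troublesome case ``$\sup_\R U=1$ attained only in the limit'' is not a case to be excluded at all---it already gives $U<1$ on $\R$ and you are done. No sliding argument, no phase-plane input, no incompatibility with the $V$-equation is needed. The paper's two-step version makes this transparent: first, if $U\le 1$ fails then $U$ has an interior maximum strictly larger than $1$, and evaluating the $U$-equation there yields the contradiction exactly as you wrote; hence $U\le 1$. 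Second, if $U(z_0)=1$ at some finite $z_0$, this is a global maximum, so $U'(z_0)=0$, $U''(z_0)\le 0$, while the equation gives $U''(z_0)=V(z_0)>0$; hence $U<1$ pointwise. (The paper phrases this second step as ``strong maximum principle'', which amounts to the same computation.)
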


\begin{proof}
The positivity of the profiles follows from the strong maximum principle. 
 If $U\leq 1$ is not true, then $U$ has to reach a maximum value strictly 
larger than 1 at some point, and evaluating the  $U$-equation at this point gives a contradiction. Hence $U\leq 1$ and, from the strong maximum principle, $U<1$. Similarly, one has $V<1$.

We now prove the limit behavior $U'(+\infty)=0$, the other ones being proved similarly. Denote the set of
accumulation points of $U'$ in $+\infty$ by $\mathcal A$. Since $U$ is bounded,
$0\in \mathcal A$. Let $\ell \in \mathcal A$. Then there exists a sequence $z_n\to+\infty$ such that $U'(z_n)\to \ell$ as $n\to+\infty$. Then $(U_n,V_n)(z):=(U,V)(z+z_n)$ solves
$$
U_n''+cU_n'=-U_n(1-U_n-V_n).
$$
Since the $L^\infty$ norm of the right hand side term is
uniformly bounded with respect to $n$, the interior elliptic
estimates imply that, for all $R>0$ and $1<p<\infty$, the
sequence $(U_n)$ is bounded in $W^{2,p}(-R,R)$. From Sobolev
embedding theorem we have that, up to a subsequence, $U_{n}$ converges to some
$U_\infty$  in $C^{1}_{loc}(\R)$. The boundary condition $U(+\infty)=\beta$ thus enforces $U_\infty\equiv \beta$ and $U_\infty '\equiv 0$. As a result, $
U'(z_n)=U_n'(0)\to U'_\infty(0)=0$,  and thus $\ell =0$. Hence $\mathcal A=\{0\}$, which concludes the proof.
\end{proof}

We conclude this subsection by showing the non-existence of traveling wave solutions for two special cases.

\begin{proposition}[Non-existence of standing waves]\label{prop:no standing}
There is no standing wave, i.e. traveling wave with speed $c=0$, for system \eqref{system}.
\end{proposition}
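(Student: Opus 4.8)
\textbf{Proof plan for Proposition~\ref{prop:no standing}.}
When $c=0$, the traveling wave system \eqref{tw} decouples nicely: the equations become $U''=-U(1-U-V)$ and $dV''=-rV(1-V-U)$, both boundary value problems on $\mathbb R$ with $(U,V)(-\infty)=(\alpha,1-\alpha)$ and $(U,V)(+\infty)=(\beta,1-\beta)$. The plan is to exploit the absence of the first-order (advection) term to build a conserved quantity, in the spirit of the mechanical-energy first integral for Newtonian-type ODEs. Concretely, I would first multiply the $U$-equation by $U'$, the $V$-equation by $V'$, and combine them: since $\int U'U''=\tfrac12(U')^2$ and likewise for $V$, one obtains that
\[
E(z):=\frac12 (U'(z))^2+\frac{d}{2}\,(V'(z))^2+\Phi(U(z),V(z))
\]
is monotone (in fact constant if the cross terms cooperate; otherwise one gets $E'$ of a definite sign along the orbit) for a suitable potential $\Phi$ with $\nabla\Phi$ matching the reaction terms up to the factor $r$. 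The subtlety is that the two reaction terms $U(1-U-V)$ and $rV(1-V-U)$ are not jointly a gradient unless $r=1$, so I would instead treat the generic $r$ by a slightly different route: integrate each scalar equation separately against its own derivative over $\mathbb R$.

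For the second and main step, I would use Lemma~\ref{lem:elliptic}, which already gives $0<U,V<1$ and $U'(\pm\infty)=V'(\pm\infty)=0$ (and, via the interior-estimate bootstrap in its proof, $U''(\pm\infty)=V''(\pm\infty)=0$). Multiplying $U''=-U(1-U-V)$ by $U'$ and integrating over $\mathbb R$, the left side telescopes to $\tfrac12[(U')^2]_{-\infty}^{+\infty}=0$, so
\[
\int_{\mathbb R} U(z)\,(1-U(z)-V(z))\,U'(z)\,dz=0,
\]
and similarly $\int_{\mathbb R} V(z)(1-V(z)-U(z))V'(z)\,dz=0$. The idea is that these integral identities, together with the boundary values, force $\alpha=\beta$, contradicting the standing assumption $\alpha\neq\beta$ in Definition~\ref{def:tw}. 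To extract that, I would look for a function $w:=U+V-1$: subtracting, $w$ satisfies $w(-\infty)=w(+\infty)=0$, and one can try to show $w\equiv 0$ directly. Indeed, adding the two original equations (with weight chosen to kill the sign-indefinite part) suggests studying $U''+\tfrac{d}{1}V''$ or rather testing the system against $w$ itself; the cleanest path is probably: since $w(\pm\infty)=0$ and $w$ solves a linear-in-$w$ elliptic inequality with the right sign, a maximum-principle argument pins $w\equiv0$, whence $U$ and $V=1-U$ satisfy $U''+U\cdot 0=0$, i.e. $U''\equiv0$, and a bounded affine function on $\mathbb R$ is constant, so $U\equiv\alpha\equiv\beta$.

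The step I expect to be the genuine obstacle is making the "$w\equiv0$" argument rigorous for general $d,r$, because $w=U+V-1$ does not satisfy an autonomous scalar equation — its equation couples $U''$ and $dV''$, which do not recombine into $w''$ when $d\neq1$. So I would most likely argue by the energy/integral identities above rather than by a direct maximum principle on $w$: from $\int U(1-U-V)U'=0$ and the analogous $V$-identity, I would integrate by parts once more to rewrite these as integrals of $(1-U-V)$ times explicit nonnegative expressions, or use that any bounded solution of $U''=f$ on $\mathbb R$ with $U'(\pm\infty)=0$ forces $\int_{\mathbb R} f=0$, i.e. $\int_{\mathbb R} U(1-U-V)=0$ and $\int_{\mathbb R} V(1-U-V)=0$; subtracting/adding these and using $0<U,V<1$ with the sign of $1-U-V$ should corner the solution. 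Closing this sign analysis cleanly — ruling out that $1-U-V$ changes sign in a balanced way — is the crux, and I anticipate it needs the extra information that $U,V$ are each monotone-free but bounded, possibly supplemented by a phase-plane remark (as the authors announce they will use elsewhere) that $c=0$ forces reversibility $z\mapsto-z$ of the orbit, which is incompatible with distinct limits $\alpha\neq\beta$.
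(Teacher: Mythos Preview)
Your proposal identifies the right target --- showing $W:=U+V\equiv 1$ --- but you dismiss the direct maximum-principle route too quickly, and that is precisely the paper's one-line argument. You write that ``$w=U+V-1$ does not satisfy an autonomous scalar equation --- its equation couples $U''$ and $dV''$, which do not recombine into $w''$ when $d\neq1$.'' This is the oversight: when $c=0$ the equations read $U''=-U(1-U-V)$ and $V''=-\tfrac{r}{d}V(1-V-U)$, and simply \emph{adding these two} (i.e.\ dividing the $V$-equation by $d$ first) gives
\[
W''+\Big(U+\tfrac{r}{d}V\Big)(1-W)=0,\qquad W(\pm\infty)=1.
\]
The coefficient $U+\tfrac{r}{d}V$ is strictly positive by Lemma~\ref{lem:elliptic}, so the maximum principle applies immediately: a maximum of $W$ above $1$ or a minimum below $1$ is incompatible with the sign of $W''$ at that point. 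Hence $W\equiv 1$, then $U''=V''=0$, and boundedness forces $U,V$ constant, contradicting $\alpha\neq\beta$.

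Your alternative energy/integral identities ($\int U(1-U-V)U'=0$, $\int U(1-U-V)=0$, etc.) are correct but, as you yourself anticipate, the sign analysis needed to rule out balanced sign changes of $1-U-V$ is genuinely delicate and you do not close it. The reversibility remark at the end is also not quite right: the $c=0$ system is reversible, but that only says solutions come in $z\mapsto -z$ pairs, not that each heteroclinic orbit is individually symmetric, so it does not by itself forbid $\alpha\neq\beta$. The missing idea is simply that the recombination you thought impossible is in fact trivial once you divide by $d$.
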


\begin{proof} Assume $c=0$. By adding the both sides of the $U$-equation and the $V$-equation, we find that $W:=U+V$ satisfies
$$
W''+\left(U+\frac r d V\right)(1-W)=0, \quad W(\pm \infty)=1.
$$
If $W\not \equiv 1$, then $W$ reaches either a maximum value strictly larger than 1 or a minimum value in $(0,1)$, which is impossible from the above equation (recall that $U+\frac r d V>0$). As a result $W=U+V\equiv 1$. Going back to the original equations we have $U''=V''=0$.
Since $U$ and $V$ are bounded, $U$ and $V$ must be constant, which cannot 
happen since $\alpha \neq \beta$.
\end{proof}

\begin{proposition}[Non-existence of traveling waves when $d=1$]\label{prop:d=1} Assume $d=1$. Then there is no traveling wave for system \eqref{system}.
\end{proposition}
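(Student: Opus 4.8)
The plan is to exploit the special algebraic structure that appears when $d=1$: the two equations of \eqref{tw} then combine into a closed scalar equation for the sum $W:=U+V$, which the maximum principle forces to be identically $1$, after which the profiles turn out to be constant --- contradicting $\alpha\neq\beta$.

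First I would add the $U$-equation and the $V$-equation in \eqref{tw} (now with $d=1$) to obtain
$$W''+cW'+\bigl(U+rV\bigr)(1-W)=0,\qquad W(\pm\infty)=1,$$
using $\alpha+(1-\alpha)=\beta+(1-\beta)=1$ for the boundary values. Writing $Z:=1-W$, this becomes the linear equation $Z''+cZ'-\bigl(U+rV\bigr)Z=0$ on $\R$ with $Z(\pm\infty)=0$, and by Lemma~\ref{lem:elliptic} the zeroth-order coefficient satisfies $U+rV>0$ everywhere. Since $Z$ is continuous and vanishes at $\pm\infty$, if $Z\not\equiv 0$ it attains a positive maximum or a negative minimum at some (necessarily interior) point $z_0$; there $Z'(z_0)=0$ while $Z''(z_0)$ has the sign opposite to $Z(z_0)$, so evaluating $Z''+cZ'=\bigl(U+rV\bigr)Z$ at $z_0$ gives a sign contradiction. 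Hence $Z\equiv 0$, i.e. $U+V\equiv 1$ on $\R$.

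It then remains to substitute $V=1-U$ into the $U$-equation of \eqref{tw}, which collapses to $U''+cU'=0$. Its only bounded solutions on $\R$ are constants (for $c\neq 0$ one of the fundamental solutions $1,e^{-cz}$ is unbounded; the case $c=0$ is in any case already handled by Proposition~\ref{prop:no standing}), so $\alpha=U(-\infty)=U(+\infty)=\beta$, contradicting $\alpha\neq\beta$. This argument is really the proof of Proposition~\ref{prop:no standing} adapted to an arbitrary speed $c$; I do not foresee a genuine obstacle, the only point needing mild care being the justification --- via $Z(\pm\infty)=0$ and continuity --- that $Z$ attains its extremum at a finite point rather than only in the limit.
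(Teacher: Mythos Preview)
Your proof is correct and follows essentially the same approach as the paper: both add the two equations with $d=1$ to obtain a scalar equation for $W=U+V$, apply a maximum principle argument (exactly the one from Proposition~\ref{prop:no standing}) to force $W\equiv 1$, and then analyze $U''+cU'=0$. The only cosmetic difference is in the last step: the paper integrates $U''+cU'=0$ over $\R$ using $U'(\pm\infty)=0$ from Lemma~\ref{lem:elliptic} to get $c(\beta-\alpha)=0$, whereas you classify the bounded solutions directly --- both conclusions are equivalent and immediate.
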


\begin{proof} Again, by adding the both sides of the $U$-equation and $V$-equation, we see that $W:=U+V$ satisfies
$$
W''+cW'+(U+rV)(1-W)=0, \quad W(\pm \infty)=1,
$$
so that, as in Proposition \ref{prop:no standing}, we have $W\equiv 1$, and thus $U''+cU'=0$. From Lemma \ref{lem:elliptic}, by integrating both sides from $-\infty$ to $+\infty$, we have $c(\beta-\alpha)=0$,
which yields a contradiction since $c\neq 0$ and $\alpha\neq \beta$. 
\end{proof}

\subsection{Proof of Theorem \ref{th:no tw alpha<1}}\label{ss:no-tw-alpha-beta}

We now consider the case of ultimately monotone traveling waves.

\begin{lemma}\label{lm:1}
Let $0\leq \alpha, \beta \leq 1$ be given  with $\alpha\neq \beta$. Let $(c,U,V)$ be an ultimately monotone $(\alpha,\beta)$-traveling wave solution. Then \eqref{monotone-assumption} is refined in 
\begin{equation}\label{monotone-assumption-bis}
U'(z)V'(z)< 0,\quad\text{for all } z\in(-\infty,z_0]\cup[z^*_0,+\infty).
\end{equation}
\end{lemma}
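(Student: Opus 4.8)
The plan is a proof by contradiction based on a phase‑plane analysis near the end states. Since $U'$ and $V'$ are continuous and, by \eqref{monotone-assumption}, nowhere zero on $(-\infty,z_0]$ and on $[z_0^*,+\infty)$, each of them keeps a constant sign on each of these intervals; if \eqref{monotone-assumption-bis} fails, then $U'$ and $V'$ have a common constant sign on one of them, and by Remark \ref{rem:minus-speed} (passing to $(-c,U(-\cdot),V(-\cdot))$) I may assume this occurs on $[z_0^*,+\infty)$, say $U'>0$ and $V'>0$ there (the case $U'<0$, $V'<0$ is identical). Then $U$ and $V$ increase on $[z_0^*,+\infty)$ towards $\beta$ and $1-\beta$, hence $U<\beta$, $V<1-\beta$, so that $m:=1-U-V>0$ on $[z_0^*,+\infty)$, with $m\to0$ and, by Lemma \ref{lem:elliptic}, $m'\to0$ as $z\to+\infty$. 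Note moreover $0<\beta<1$: if $\beta=1$, then $V>0$ together with $V\to0^+$ forces $V'<0$ near $+\infty$ (and $\beta=0$ is symmetric), so in those cases \eqref{monotone-assumption-bis} already holds.

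Next I would recast \eqref{tw} as a first‑order system for $X=(U,V,U',V')$ and study the convergence $X(z)\to E_\beta:=(\beta,1-\beta,0,0)$ as $z\to+\infty$. A direct computation shows that the characteristic polynomial of the linearization at $E_\beta$ is $\lambda\,Q(\lambda)$ with $Q(\lambda)=(d\lambda+c)(\lambda^2+c\lambda-\beta)-r(1-\beta)(\lambda+c)$; the simple eigenvalue $0$ corresponds to the direction $(1,-1,0,0)$ tangent to the line of equilibria $\{u+v=1\}$. Since \eqref{monotone-assumption} prevents $U'$ and $V'$ from changing sign near $+\infty$, the approach to $E_\beta$ cannot be spiralling, which excludes that the stable subspace of $E_\beta$ is a complex‑conjugate pair; hence $Q$ has a negative real root along which the orbit converges. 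By the centre–manifold theorem — the centre manifold being the line of equilibria, on which the field vanishes identically — the orbit enters $E_\beta$ tangentially to a real negative eigendirection, with eigenvalue $\mu<0$. Its $(U,V)$‑components $(\xi_U,\xi_V)$ satisfy $\beta\xi_V=(\mu^2+c\mu-\beta)\xi_U$, so $U'(z)\sim\mu\xi_U e^{\mu z}$, $V'(z)\sim\mu\xi_V e^{\mu z}$, and near $+\infty$ one has $\operatorname{sign}(U'V')=\operatorname{sign}(\mu^2+c\mu-\beta)$. Our assumption $U'V'>0$ then forces $\mu<\lambda_1^*:=\tfrac12(-c-\sqrt{c^2+4\beta})$ (the other root of $\lambda^2+c\lambda-\beta$ being $>0$ since $\beta>0$).

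To close the argument I would show this is impossible. Evaluating $Q(\lambda_1^*)=-r(1-\beta)(\lambda_1^*+c)$, whose sign is that of $-(\lambda_1^*+c)\ge 0$, and comparing with $Q(0)=-c(\beta+r(1-\beta))$ and $Q(\pm\infty)=\pm\infty$, one locates the negative roots of $Q$ and checks that the eigenvalue actually governing the decay satisfies $\mu^2+c\mu-\beta<0$ (equivalently $\mu$ lies strictly between the two roots of $\lambda^2+c\lambda-\beta$), so that $U'V'<0$ near $+\infty$ — contradicting $U'V'>0$. The main obstacle, and where the real care is needed, is this last step when the stable subspace of $E_\beta$ is two‑dimensional: then $Q$ has two negative roots, the "slow" one giving $U'V'<0$ and the "fast" one $U'V'>0$, and one must rule out that the orbit enters along the one‑dimensional strong‑stable manifold of $E_\beta$. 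I expect to do this either by following that manifold backwards and showing it leaves the invariant region $\{0\le U,V\le1\}$ before it can reach $E_\alpha$, or by exploiting the global identities $\int_{\mathbb R}U(1-U-V)\,dz=c(\alpha-\beta)$ and $\int_{\mathbb R}V(1-U-V)\,dz=\tfrac cr(\beta-\alpha)$ (obtained by integrating the two lines of \eqref{tw} over $\mathbb R$ and using $U'(\pm\infty)=V'(\pm\infty)=0$), which constrain the global sign pattern of $1-U-V$. The $-\infty$ end is treated the same way, with the roles of stable and unstable directions exchanged.
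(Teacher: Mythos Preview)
Your proposal takes a far more elaborate route than the paper and, crucially, leaves the decisive step unfinished. You yourself identify the obstacle: when the linearization at $E_\beta$ has a two-dimensional stable subspace, the strong-stable eigendirection can give $U'V'>0$, and you only \emph{propose} two possible ways to rule out convergence along it (backward shooting, or the global integral identities) without carrying either out. That is a genuine gap, not a formality: the whole content of the lemma is precisely to exclude the $U'V'>0$ scenario, so deferring this case amounts to deferring the proof.

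The paper's argument avoids all spectral machinery. Working near $-\infty$ (the $+\infty$ end is symmetric), suppose $U'>0$ and $V'>0$ on $(-\infty,z_0]$. Since $(U+V)(\pm\infty)=1$, the monotonicity cannot persist on all of $\R$; let $z_1>z_0$ be the first point where $U'$ or $V'$ vanishes, say $U'(z_1)=0$. Then $U''(z_1)\le 0$, and the $U$-equation at $z_1$ reads $U''(z_1)=-U(z_1)\bigl(1-U(z_1)-V(z_1)\bigr)$, forcing $(U+V)(z_1)\le 1$. But $(U+V)(-\infty)=1$ and $(U+V)'=U'+V'>0$ on $(-\infty,z_1)$, so $(U+V)(z_1)>1$, a contradiction. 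This is a three-line maximum-principle argument; no linearization, no centre manifolds, no case split on the dimension of the stable subspace. I recommend you replace your eigenvalue analysis by this direct argument.
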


\begin{proof} We only deal with the behavior around $-\infty$.  If the conclusion is false, we may assume that $U'(z)>0$ and $V'(z)>0$ for all $z\in(-\infty,z_0]$, the case $U'(z)<0$ and $V'(z)<0$ being treated similarly. From the boundary conditions $(U,V)(-\infty)=(\alpha,1-\alpha)$ and $(U,V)(+\infty)=(\beta,1-\beta)$, there must exist a  point $z_1>z_0$ such that
$$
U'(z)>0, V'(z)>0, \quad \text{for all } z<z_1, \quad U'(z_1)=0 \text{ or } V'(z_1)=0.
$$
Assume w.l.o.g. that $U'(z_1)=0$. In particular $U''(z_1)\leq 0$. From the $U$-equation, this enforces $(U+V)(z_1)\leq 1$ which contradicts to $(U+V)(-\infty)=1$ and $(U+V)'>0$ on $(-\infty,z_1)$.
\end{proof}

We now prove, in the case $d\neq 1$, the non-existence of ultimately monotone traveling waves with speed $c\neq 0$. In view of subsection \ref{ss:apriori}, this is enough to  complete the proof of Theorem \ref{th:no tw alpha<1}.

\begin{proof}[Completion of the proof of Theorem \ref{th:no tw alpha<1}] For $d\neq 1$, let us consider $(c,U,V)$  an ultimately monotone $(\alpha,\beta)$-traveling wave with $c\neq 0$. In the sequel we only deal  with the case $c>0$ for which we perform a  phase plane analysis around $-\infty$ (for the case $c<0$, one has to perform a phase plane analysis around $+\infty$ with similar arguments). We define $W:=\alpha-U$, $P:=U'$, $R:=V-1+\alpha$, $Q:=V'$. 
Then we can rewrite (\ref{tw}) as 
\begin{equation}\label{ode system alpha<1}
\left\{
\begin{aligned}
&W'=-P,\\
&P'=-cP-(\alpha-W)(W-R),\\
&R'=Q,\\
&Q'=-\frac{c}{d}Q-\frac{r}{d}(R+1-\alpha)(W-R),\\
&(W,P,R,Q)(-\infty)=(0,0,0,0),\\
&(W,P,R,Q)(+\infty)=(\alpha-\beta,0,\alpha-\beta,0).
\end{aligned}
\right.
\end{equation}

Assume that $W-R$ is ultimately nonnegative, that is
\begin{equation}
\label{not-u-pos}
\exists z^*<z_0, \forall z\leq z^*, (W-R)(z)\geq 0.
\end{equation}
From \eqref{monotone-assumption-bis}, we know that it holds either $P>0$ or $Q>0$ on $(-\infty, z_0]$. Moreover, from Lemma \ref{lem:elliptic}, $\alpha-W\ge 0$ and $R+1-\alpha\ge 0$. If $P>0$ on $(-\infty,z_0]$, then from the $P$-equation in \eqref{ode system alpha<1}, we have $P'< 0$ on $(-\infty, z^*]$, which contradicts to $P>0$ on $(-\infty,z_0]$ and $P(-\infty)=0$. If $Q>0$ on $(-\infty, z_0]$, we similarly get a contradiction from the $Q$-equation. Hence \eqref{not-u-pos} does not hold.

Assume that $W-R$ is ultimately nonpositive, that is
\begin{equation}
\label{not-u-neg}
\exists z^*<z_0, \forall z\leq z^*, (W-R)(z)\leq 0.
\end{equation}
From \eqref{monotone-assumption-bis} again, we know that it holds either $P<0$ or $Q<0$ on $(-\infty, z_0]$.  If $P<0$ on $(-\infty,z_0]$, then from the $P$-equation in \eqref{ode system alpha<1}, we have $P'> 0$ on $(-\infty, z^*]$, which contradicts to $P<0$ on $(-\infty,z_0]$ and $P(-\infty)=0$. If $Q<0$ on $(-\infty, z_0]$, we similarly get a contradiction from the $Q$-equation. Hence \eqref{not-u-neg} does not hold.

As a result, since $W-R$ is neither ultimately nonnegative nor ultimately nonpositive, we can  find a local maximum point $z_1<z_0$ and a local minimum point $z_2<z_0$ such that
$$(W-R)(z_1)>0,\ (W-R)'(z_1)=0, \ (W-R)''(z_1)\le 0;$$
$$(W-R)(z_2)<0,\ (W-R)'(z_2)=0,  \ (W-R)''(z_2)\ge 0.$$
Note that
$$ (W-R)''=cP+\frac{c}{d}Q+\left(\alpha-W+\frac{r}{d}(R+1-\alpha)\right)(W-R).$$
From \eqref{monotone-assumption-bis}, it holds either $Q>0$ or $Q<0$ on $(-\infty, z_0]$.
Let us first consider the case $d<1$. If $Q>0$ on $(-\infty,z_0]$, since $(W-R)'(z_1)=0$ means $(P+Q)(z_1)=0$, we have $(cP+\frac{c}{d}Q)(z_1)>0$. Therefore, from the above equation, $(W-R)''(z_1)>0$, which is a contradiction. On the other hand,  if $Q<0$ on $(-\infty,z_0]$, since $(P+Q)(z_2)=0$, we have $(cP+\frac{c}{d}Q)(z_2)<0$. Therefore, from the above equation, $(W-R)''(z_2)<0$, which is a contradiction. Last, the case $d>1$ can be treated similarly.

Therefore, we conclude that system \eqref{system} does not admit any ultimately monotone traveling wave.
\end{proof}

\section{The Cauchy problem}\label{s:cauchy}

In this section, we consider system (\ref{system}) with a compactly supported initial datum, and prove the propagation result, namely Theorem \ref{th: profile}. 

\subsection{Preliminaries}

Let us start by briefly recalling the competitive comparison principle. Define the operators
\begin{equation*}
N_1[u,v]:=u_t-u_{xx}-u(1-u-v)\quad \text{ and } \quad  N_2[u,v]:=v_t-dv_{xx}-rv(1-v-u).
\end{equation*}
Consider a domain $
\Omega:=(t_1,t_2)\times(x_1,x_2)$ with  $0\le t_1<t_2\le + \infty$ and $-\infty\le x_1<x_2\le+\infty$. A (classical) super-solution is a pair  $(\overline{u},\underline{v})\in \Big[C^1\Big((t_1,t_2),C^2((x_1,x_2))\Big)\cap C_b\left(\overline \Omega\right)\Big]^2$
satisfying
$$
N_1[\overline{u},\underline{v}]\geq 0 \quad  \text{and}\quad  N_2[\overline{u},\underline{v}]\leq 0\;  \text{ in } \Omega.
$$
Similarly, a (classical) sub-solution $(\underline u, \overline v)$ requires  $N_1[\underline{u},\overline{v}]\leq 0$ and $N_2[\underline{u},\overline{v}]\geq 0$. 

\begin{proposition}[Comparison Principle]\label{prop: cp}
Let $(\overline{u},\underline{v})$ and $(\underline{u},\overline{v})$ be a super-solution and sub-solution of system (\ref{system}) in $\Omega$, respectively. If
$$
\overline{u}(t_1,x)\ge \underline{u}(t_1,x) \quad \text{and} \quad \underline{v}(t_1,x)\le\overline{v}(t_1,x),\quad\text{for all } x\in (x_1,x_2),
$$
and, for $i=1,2$, 
$$
\overline{u}(t,x_i)\ge \underline{u}(t,x_i) \quad \text{and} \quad \underline{v}(t,x_i)\le\overline{v}(t,x_i),\quad\text{for all } t\in(t_1,t_2),
$$
then, it holds
$$ 
\overline{u}(t,x) \ge\underline{u}(t,x) \quad \text{ and } \quad \underline{v}(t,x)\le\overline{v}(t,x),\quad  \text{for all } (t,x)\in\Omega.
$$
 If $x_1=-\infty$ or $x_2=+\infty$, the hypothesis on the corresponding boundary condition can be omitted.
\end{proposition}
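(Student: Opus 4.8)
The plan is to decouple the system into two scalar parabolic differential inequalities whose coupling has a sign dictated by competition, and then to run a maximum-principle argument on the pair. First I would set $\phi:=\underline u-\overline u$ and $\psi:=\overline v-\underline v$, so that the conclusion is precisely $\phi\le 0$ and $\psi\ge 0$ on $\Omega$. Subtracting $N_1[\overline u,\underline v]\ge 0$ from $N_1[\underline u,\overline v]\le 0$, and $N_2[\underline u,\overline v]\ge 0$ from $N_2[\overline u,\underline v]\le 0$, and expanding the Lotka--Volterra reaction terms, one is led to
\[
\phi_t-\phi_{xx}\le a\,\phi-\underline u\,\psi,\qquad \psi_t-d\psi_{xx}\ge b\,\psi-r\underline v\,\phi,
\]
with $a:=1-\underline u-\overline u-\underline v$ and $b:=r(1-\underline u-\overline v-\underline v)$ bounded on $\overline\Omega$ (since all four functions lie in $C_b$), and with $\underline u\ge 0$, $\underline v\ge 0$. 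The essential structural fact is that the competitive coupling makes the cross terms $-\underline u\,\psi$ and $-r\underline v\,\phi$ enter with exactly the signs that allow a comparison principle: controlling $\psi\ge0$ forces $\phi$ to be a subsolution of a scalar linear equation, and vice versa.

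Next I would remove the zeroth-order terms by an exponential shift in time: choose $\lambda$ larger than $\sup_{\overline\Omega}|a|+\sup_{\overline\Omega}\underline u$ and than $\sup_{\overline\Omega}|b|+\sup_{\overline\Omega} r\underline v$ (finite by boundedness), and set $p:=e^{-\lambda t}\phi$, $q:=-e^{-\lambda t}\psi$. A direct computation turns the above into
\[
p_t-p_{xx}-(a-\lambda)p\le \underline u\,q,\qquad q_t-dq_{xx}-(b-\lambda)q\le r\underline v\,p,
\]
where now $a-\lambda<0$ and $b-\lambda<0$. Both $p$ and $q$ are bounded and, by the hypotheses at $t=t_1$ and at $x=x_1,x_2$, satisfy $p\le 0$ and $q\le 0$ on the parabolic boundary of $\Omega$. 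Set $M:=\sup_{\overline\Omega}\max(p,q)$ and assume for contradiction $M>0$. Along a near-maximizing sequence one of the two components, say $p$, tends to $M$; a standard perturbation (subtracting $\varepsilon/(T-t)$ with $T<t_2$, and, when the $x$-domain is unbounded, adding a small barrier of the form $\varepsilon(1+x^2+2t)$ using that $p$ is bounded) produces an interior point $(t_0,x_0)$ where $p_t\ge 0$, $p_{xx}\le 0$ and $p$ is arbitrarily close to $M$. At such a point the left-hand side of the $p$-inequality is at least $(\lambda-a)p$, which by the choice of $\lambda$ is strictly larger than $(\sup\underline u)\,p\ge \underline u\,q$ (here one uses $q\le M$ and $\underline u\ge 0$), contradicting the inequality. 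The case where the maximizing sequence is for $q$ is handled symmetrically via the second inequality. Hence $M\le 0$, i.e.\ $\phi\le 0$ and $\psi\ge 0$, and undoing the shift gives the claim; the final sentence of the statement follows at once, since on an unbounded side the barrier alone controls the behaviour at spatial infinity.

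The main obstacle is not the algebra but the fact that the supremum $M$ need not be attained — either because $t_2=+\infty$, or because $x_1=-\infty$ or $x_2=+\infty$. This is exactly what the exponential-in-time shift is designed to overcome: after the shift the zeroth-order coefficients are negative, which (together with boundedness of $\overline u,\underline u,\overline v,\underline v$) makes the perturbation/barrier argument above a routine Phragmén--Lindelöf-type maximum principle. I would therefore spend the bulk of the write-up on carefully justifying that step, and only sketch the reaction-term bookkeeping and the exponential change of variables, which are straightforward.
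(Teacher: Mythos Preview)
The paper does not actually prove this proposition: it is stated as a well-known fact, with a pointer to \cite[subsection 2.1]{Girardin Lam} for the generalized version. Your outline is the standard correct argument --- pass to the differences, observe that the competitive coupling makes the resulting linear system cooperative in $(\phi,-\psi)$, shift exponentially in time to make the diagonal zeroth-order coefficients strictly negative, and run a Phragm\'en--Lindel\"of-type maximum principle on $\max(p,q)$.

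Two minor remarks. First, your argument uses $\underline u\ge 0$ and $\underline v\ge 0$ to get the right sign on the cross terms; this is not written into the statement of the proposition but is implicit throughout the paper (every sub- and super-solution constructed later is $[0,1]$-valued) and is in fact necessary for the competitive monotonicity $\partial_v[u(1-u-v)]\le 0$, $\partial_u[rv(1-v-u)]\le 0$ to hold. Second, the barrier $\varepsilon(1+x^2+2t)$ satisfies $B_t-B_{xx}=0$ but $B_t-dB_{xx}=2(1-d)$, so when $d\ne 1$ you should use $\varepsilon(1+x^2+2dt)$ for the $q$-inequality, or a single barrier such as $\varepsilon e^{Kt}\cosh(\mu x)$ with $K$ large that works for both. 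These are cosmetic; the strategy is sound.
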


Denote $(u,v)=(u,v)(t,x)$ as the solution of \eqref{system} starting from $(u_0,v_0)=(u_0,v_0)(x)$ satisfying \eqref{initial data}. Obviously, $(1,0)$ is a super-solution while $(0,1)$ is a sub-solution. It thus follows from \eqref{initial data}, the comparison principle and the strong maximum principle that
\begin{equation}\label{sandwich-0-1}
0<u(t,x)<1 \text{ and } 0<v(t,x)<1, \quad \text{for all }\ t>0, x\in \R.
\end{equation}

Actually, the comparison principle also holds for the so-called {\it generalized} sub- and super-solutions. This is a rather well-known fact, and we refer to the clear exposition  in \cite[subsection 2.1]{Girardin Lam} for more  details. In particular,  if $(\underline {u}_1,\overline{v})$ 	and $(\underline {u}_2, \overline{v})$ are both classical sub-solutions, then $(\max(\underline {u}_1,\underline{u}_2),\overline v)$ is a generalized sub-solution. Also, 
 if $(\underline u,\overline{v}_1)$ 	and $(\underline u, \overline{v}_2)$ are both classical sub-solutions, then $(\underline u,\min(\overline{v}_1,\overline{v}_2))$ is a generalized sub-solution.

\medskip

We now start the proof of Theorem \ref{th: profile}.  Observe that, by changing the variables as $x=\sqrt{\frac{d}{r}}y$ and $t=\frac{1}{r}s$, system \eqref{system} can be rewritten as 
\begin{equation*}
\left\{
\begin{aligned}
&\partial_sv=v_{yy}+v(1-v-u), & s>0, y\in\mathbb{R}, \\
&\partial_su=d^{-1}u_{xx}+r^{-1}u(1-u-v), & s>0, y\in\mathbb{R}.\\
\end{aligned}
\right.
\end{equation*}
Therefore, without loss of generality, we assume from now that $dr>1$, that is $c_v>c_u$, and shall prove the statement $(i)$ in Theorem \ref{th: profile}. 

\medskip

Since $c_v>c_u$ and $u$ cannot propagate faster than $c_u$, the behavior of the solution in the region $\vert x\vert >c_u t$ is rather well-understood.

\begin{proposition}[Estimates in the region $\vert x \vert >c_ut$]\label{prop:estimate on leading edge}
We have
\begin{equation}\label{truc1}
\lim_{t\to\infty}\sup_{|x|\ge c t} u(t,x) =0, \quad \text{ for all }  c>c_u,
\end{equation}
\begin{equation}\label{truc2}
\lim_{t\to\infty}\sup_{|x|\ge c t}v(t,x)=0, \quad \text{ for all }  c>c_v,
\end{equation}
and
\begin{equation}\label{profile on intermediate zone}
\lim_{t\to\infty}\sup_{c_1t\leq |x|\leq  c_2t} |1-v(t,x)| =0, \quad \text{ for all }  c_u<c_1<c_2<c_v.
\end{equation}
\end{proposition}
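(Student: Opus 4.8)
My plan is to derive all three assertions from comparisons with \emph{scalar} Fisher--KPP equations; the first two are immediate, while the third requires one genuinely technical sub-solution. \textbf{The leading-edge bounds \eqref{truc1}--\eqref{truc2}.} Since $0\le u,v\le1$ by \eqref{sandwich-0-1}, the first equation of \eqref{system} yields $\partial_t u\le u_{xx}+u(1-u)$, so $u$ is a sub-solution of the KPP equation $w_t=w_{xx}+w(1-w)$; by the scalar comparison principle $u(t,\cdot)\le w(t,\cdot)$, where $w$ is the solution issued from the compactly supported datum $u_0$, and the classical spreading result \cite{Aronson Weinberger} (spreading speed $2=c_u$) gives \eqref{truc1}. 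The same argument applied to $\partial_t v\le dv_{xx}+rv(1-v)$, whose associated KPP equation has spreading speed $2\sqrt{dr}=c_v$, gives \eqref{truc2}. These steps use nothing beyond Proposition~\ref{prop: cp} and scalar comparison.

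\textbf{Reduction of \eqref{profile on intermediate zone}.} The estimate $1-v(t,x)\ge0$ being free from \eqref{sandwich-0-1}, it remains to prove $\liminf_{t\to\infty}\inf_{c_1t\le|x|\le c_2t}v(t,x)\ge1$; I treat the half $c_1t\le x\le c_2t$ (the half $-c_2t\le x\le -c_1t$ is handled identically, since \eqref{truc1} also bounds $u$ on $x\le-c_1't$). Fix $c_u<c_1'<c_1$ and $\epsilon\in\bigl(0,\,1-c_2^2/(4dr)\bigr)$. By \eqref{truc1} there is $T_\epsilon$ with $u(t,x)\le\epsilon$ for all $t\ge T_\epsilon$, $x\ge c_1't$. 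A one-line computation, $\partial_t v-dv_{xx}-rv(1-\epsilon-v)=rv(\epsilon-u)\ge0$, shows that on the moving domain $\Omega_\epsilon:=\{(t,x):t>T_\epsilon,\ x>c_1't\}$ the function $v$ is a scalar super-solution of the monostable equation $w_t=dw_{xx}+rw(1-\epsilon-w)$, which has stable state $1-\epsilon$ and spreading speed $c_v^\epsilon:=2\sqrt{dr(1-\epsilon)}$; by the choice of $\epsilon$ one has $c_1'<c_2<c_v^\epsilon$, and $c_v^\epsilon\to c_v$ as $\epsilon\to0^+$.

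\textbf{Lower bound by a moving sub-solution, and conclusion.} I would seed at time $T_\epsilon$ a small compactly supported bump $0\le\psi\le v(T_\epsilon,\cdot)$ with support inside $(c_1'T_\epsilon,+\infty)$ (possible as $v(T_\epsilon,\cdot)>0$ is continuous), let it run under the above monostable equation on $\Omega_\epsilon$ with Dirichlet value $0$ on the moving boundary $x=c_1't$, and observe (scalar comparison on $\Omega_\epsilon$) that the resulting function stays below $v$. Because the boundary $x=c_1't$ recedes strictly slower than the invasion speed $c_v^\epsilon$, the monostable invasion is not blocked, so this sub-solution converges to $1-\epsilon$ uniformly on $[c_1t,c_2t]$; hence $\liminf_{t\to\infty}\inf_{c_1t\le x\le c_2t}v(t,x)\ge1-\epsilon$. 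Since every small enough $\epsilon$ is admissible, letting $\epsilon\to0^+$ gives $\liminf\ge1$, which combined with $v<1$ is exactly the uniform convergence claimed in \eqref{profile on intermediate zone}.

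\textbf{Main obstacle.} Everything resting on the comparison principle is routine; the one point needing real work is the assertion that a monostable front is not stopped by a Dirichlet obstacle receding strictly below the spreading speed (equivalently, spreading of $w$ on the moving half-line $x>c_1't$). I would establish it by an explicit compactly supported auxiliary sub-solution travelling in the moving frame $\xi=x-\gamma t$ at some $\gamma\in(c_1',c_v^\epsilon)$, built in the spirit of the classical construction of \cite{Aronson Weinberger}: it is a genuine sub-solution while its amplitude is small, it can be arranged (by starting far enough to the right) to remain inside $\Omega_\epsilon$ until its amplitude reaches order one, and the hair-trigger effect then lifts the level to $1-\epsilon$ on $[c_1t,c_2t]$; alternatively one may quote an existing propagation result for reaction--diffusion equations in shifting media. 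Tuning the parameters of that auxiliary sub-solution and carrying out this last comparison is the only laborious part of the proof.
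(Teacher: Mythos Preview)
Your treatment of \eqref{truc1}--\eqref{truc2} is correct and essentially equivalent to the paper's: instead of citing the Aronson--Weinberger spreading result for the scalar KPP equation, the paper writes down explicit exponential super-solutions $U(t,x)=C_1e^{-\frac{c_u}{2}(x-c_ut)}$ and $V(t,x)=C_2e^{-\frac{c_v}{2d}(x-c_vt)}$ (so that $(U,0)$ is a super-solution and $(0,V)$ a sub-solution of the system). This yields the pointwise bound
\[
u(t,x)\le\min\bigl(1,\,C_1e^{-\frac{c_u}{2}(x-c_ut)}\bigr),
\]
which is slightly more than the mere limit \eqref{truc1} and is in fact reused later in the paper (e.g.\ in Proposition~\ref{prop: boundary condition super sol} and in deriving \eqref{exponential decay of u}). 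Your argument gives \eqref{truc1} as stated but not this quantitative refinement.

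For \eqref{profile on intermediate zone} your route is genuinely different. The paper does \emph{not} argue via a scalar sub-solution on a moving domain; instead it picks any $0<a<1<b$, observes that the solution $(u^*,v^*)$ of the strong--weak competition system \eqref{monostable competition system} with the same initial data is a super-solution of \eqref{system} (hence $v^*\le v\le 1$), and then simply quotes the spreading result of Girardin--Lam \cite[Theorem~1.1]{Girardin Lam} for the strong--weak system to get $v^*\to 1$ on $c_1t\le|x|\le c_2t$. This is a two-line proof, at the price of importing a substantial external result. Your approach---exploiting $u\le\epsilon$ ahead of $c_1't$, reducing to the scalar monostable equation $w_t=dw_{xx}+rw(1-\epsilon-w)$, and propagating a seeded bump on the moving half-line $\{x>c_1't\}$---is more self-contained and conceptually elementary, and it works. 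The one step you flag as the ``main obstacle'' (spreading with a Dirichlet obstacle receding at sub-critical speed) is indeed the only non-routine point; your sketch via a compactly supported travelling sub-solution in the frame $\xi=x-\gamma t$, $\gamma\in(c_1',c_v^\epsilon)$, is the right construction and can be carried out, but as written it remains a sketch. In short: the paper trades self-containment for brevity by leaning on \cite{Girardin Lam}; your argument avoids that dependence but leaves one genuine lemma to be filled in.
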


\begin{proof} Without loss of generality, we only deal with the case $x\ge 0$. Define 
$$
U(t,x):=C_1e^{-\frac{c_u}{2}(x-c_ut)}\quad \text{and} \quad V(t,x):=C_2e^{-\frac{c_v}{2d}(x-c_vt)},
$$
where $C_1>0$ and $C_2>0$ are chosen large enough so that $U(0,\cdot)\geq u_0$ and $V(0,\cdot)\geq v_0$. We can easily check that $(U,0)$ is a super-solution while $(0,V)$ is a sub-solution. As a result, we have 
\begin{equation}\label{easy}
0<u(t,x)\leq \min\left(1,C_1e^{-\frac{c_u}{2}(x-c_ut)}\right)\quad\text{and}\quad 0<v(t,x)\leq \min\left(1,C_2e^{-\frac{c_v}{2d}(x-c_vt)}\right),
\end{equation}
from which \eqref{truc1} and \eqref{truc2} follow. 

Next, let $c_u<c_1<c_2<c_v$ be given. Select $0<a<1<b$ and consider $(u^{*},v^{*})$  the solution of
the strong-weak  competition system
\begin{equation}\label{monostable competition system}
\left\{
\begin{aligned}
&\partial_t u^*=u^*_{xx}+u^*(1-u^*-av^*),\\
&\partial_t v^*=dv^*_{xx}+rv^*(1-v^*-bu^*),\\
\end{aligned}
\right.
\end{equation}
starting from $(u_0,v_0)$. Obviously, $(u^{*},v^{*})$ is a super-solution for system \eqref{system}, and thus $v^{*}(t,x)\leq v(t,x)\leq 1$ for all $t\ge 0$ and $x\in\R$. Since the statements $(2)$ and $(3)$ in \cite[Theorem 1.1]{Girardin Lam} imply
$$
\lim_{t\to\infty}\sup_{c_1t\leq x\leq c_2t}|1-v^*(t,x)|=0,
$$
 the same conclusion holds for $v$.
\end{proof}

\subsection{Construction of the super-solution}\label{ss:super}

The goal of this subsection is to construct an adequate super-solution in, roughly speaking, the region $\vert x\vert <c_u t$. More precisely, let $\frac 1 d<r_1<r$ be given and define $c^{*}_v:=2\sqrt{dr_1}<c_v$. In the sequel, we introduce $V_{1}$ as a traveling wave solution with speed $c_v^{*}=2\sqrt{d r_1 }$ solving
\begin{equation}\label{single-V1}
\left\{
\begin{aligned}
&dV_1''+c_v^{*}V_1'+r_1V_1(1-V_1)=0,\\
&V_1(-\infty)=1,\  V_1(\infty)=0.
\end{aligned}
\right.
\end{equation} 
As well-known, $V_1'<0$ and there are $\lambda_1>0$ and $M _1>0$ such that
\begin{equation}\label{estimate of V at infinity-un}
 1-V_1(\xi)\sim M _1 e^{\lambda _1 \xi}\; \text{ as } \xi \to -\infty.
\end{equation}
Let us fix  $c_u<c_1<c_v^{*}$. For $T>0$, we will work in the domain (which is \lq\lq expanding in time'')
\begin{equation}\label{def:omega-un}
\Omega _1(T):=\{(t,x)\in(T,\infty)\times \R: \vert x\vert <c_1t\}.
\end{equation}
It turns out that the construction of the super-solution is highly dependent on the value of $d$.

\medskip

\noindent{$\bullet$  \bf The case $d\le 1$.} We introduce $s=s(t,x)$ as the solution of the Cauchy problem
\begin{equation}
\label{def-s}
\left\{
\begin{aligned}
\partial_t s&= s_{xx},\\
s(0,x)&=s_0(x):=B_1e^{-q|x|},
\end{aligned}
\right.
\end{equation}
and look for a super-solution $(\tilde U,\tilde V)$ in the form
\begin{equation}\label{definition of super sol system}
\left\{
\begin{aligned}
\tilde{U}(t,x)&:=t^{\frac{1-d}{2}}(1-e^{-\tau t})s(t,x),\\
\tilde{V}(t,x)&:=V_{1}(x-c^{*}_vt)+V_{1}(-x-c^{*}_vt)-1-\tilde{U}(t,x).
\end{aligned}
\right.
\end{equation}
All parameters that will be determined below (namely $B_1$, $q$ and $\tau$) are positive, and $q<1$.

\medskip

\noindent{$\bullet$ \bf The case $d\ge 1$.} We  introduce $s=s(t,x)$ as the solution of the Cauchy problem
\begin{equation}
\label{def-s*}
\left\{
\begin{aligned}
\partial_t s&= ds_{xx},\\
s(0,x)&=s_0(x):=B_1e^{-q|x|},
\end{aligned}
\right.
\end{equation}
and look for a super-solution $(\tilde U,\tilde V)$ in the form
\begin{equation}\label{definition of super sol system d>1}
\left\{
\begin{aligned}
\tilde{U}(t,x)&:=t^{\frac{d-1}{2d}}(1-e^{-\tau t})s(t,x),\\
\tilde{V}(t,x)&:=V_{1}(x-c^{*}_vt)+V_{1}(-x-c^{*}_vt)-1-\tilde{U}(t,x).
\end{aligned}
\right.
\end{equation}
All parameters that will be determined below (namely $B_1$, $q$ and $\tau$) are positive, and $q<\frac{1}{d}$.

\medskip

Obviously, \eqref{def-s}---\eqref{definition of super sol system} and \eqref{def-s*}---\eqref{definition of super sol system d>1} coincide when $d=1$. 

\begin{proposition}[Super-solutions]\label{prop: inequality super solution} The following holds.
\begin{itemize}
\item[$(i)$] Assume $d\le 1$. Let $0<q<1$ and $0<\tau<\lambda_1(c^{*}_v-c_1)$ be given. Then there exists $T^{*}>0$ such that,  for all $B_1>0$, $(\tilde U,\tilde V)$, given by \eqref{def-s}---\eqref{definition of super sol system},  is  a super-solution in the domain $\Omega_1(T^{*})$ as defined in \eqref{def:omega-un}.

\item[$(ii)$] Assume $d\ge 1$.  Let $0<q<\frac{1}{d}$ and $0<\tau<\lambda_1(c^{*}_v-c_1)$ be given. Then there exists $T^{*}>0$ such that,  for all $B_1>0$, $(\tilde U,\tilde V)$, given by \eqref{def-s*}---\eqref{definition of super sol system d>1},  is  a super-solution in the domain $\Omega_1(T^{*})$ as defined in \eqref{def:omega-un}.
\end{itemize}
\end{proposition}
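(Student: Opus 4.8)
The plan is to verify the two differential inequalities $N_1[\tilde U,\tilde V]\ge 0$ and $N_2[\tilde U,\tilde V]\le 0$ directly in $\Omega_1(T^*)$, treating the case $d\le 1$ (the case $d\ge 1$ being symmetric, with the substitutions $s_{xx}\rightsquigarrow ds_{xx}$ and the exponent $\frac{1-d}{2}\rightsquigarrow\frac{d-1}{2d}$, and $q<1\rightsquigarrow q<\frac1d$). Write $g(t):=t^{\frac{1-d}{2}}(1-e^{-\tau t})$, so $\tilde U=g(t)s(t,x)$. Since $s$ solves the heat equation, $\tilde U_t-\tilde U_{xx}=g'(t)s$, which is the key cancellation: $g'(t)=\bigl(\frac{1-d}{2}t^{-1}(1-e^{-\tau t})+\tau e^{-\tau t}\bigr)t^{\frac{1-d}{2}}>0$ for $t$ large since $d\le 1$. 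Hence
$$
N_1[\tilde U,\tilde V]=g'(t)s-\tilde U(1-\tilde U-\tilde V)=g(t)s\Bigl(\tfrac{g'(t)}{g(t)}-(1-\tilde U-\tilde V)\Bigr).
$$
Now by the very definition of $\tilde V$ in \eqref{definition of super sol system}, $1-\tilde U-\tilde V=2-V_1(x-c_v^*t)-V_1(-x-c_v^*t)$, which is a quantity that does not involve $\tilde U$ at all and is $\ge 0$; in fact in $\Omega_1(T^*)$, where $\vert x\vert<c_1t<c_v^*t$, the argument $x-c_v^*t\to-\infty$ uniformly, and \eqref{estimate of V at infinity-un} gives $2-V_1(x-c_v^*t)-V_1(-x-c_v^*t)=(1-V_1(x-c_v^*t))+(1-V_1(-x-c_v^*t))\le C e^{-\lambda_1(c_v^*-c_1)t}$. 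Meanwhile $\frac{g'(t)}{g(t)}\ge \tau e^{-\tau t}\cdot t^{\frac{d-1}{2}}\cdot t^{\frac{1-d}{2}}(1-e^{-\tau t})^{-1}\cdot(\dots)$ — more simply, $\frac{g'}{g}=\frac{1-d}{2t}+\frac{\tau e^{-\tau t}}{1-e^{-\tau t}}\ge \frac{\tau e^{-\tau t}}{1-e^{-\tau t}}\ge c\,e^{-\tau t}$ for $t\ge T^*$. Since we assumed $\tau<\lambda_1(c_v^*-c_1)$, the term $e^{-\tau t}$ dominates $e^{-\lambda_1(c_v^*-c_1)t}$, so $\frac{g'}{g}\ge 1-\tilde U-\tilde V$ for $T^*$ large enough, and $N_1[\tilde U,\tilde V]\ge 0$ follows (using $s>0$, $g>0$). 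I would carry this out first.

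For the second inequality, I would write $\tilde V=\Phi(t,x)-\tilde U$ with $\Phi(t,x):=V_1(x-c_v^*t)+V_1(-x-c_v^*t)-1$, and compute $N_2[\tilde V+\tilde U-\text{(corrections)}]$ — more precisely, $N_2[\tilde U,\tilde V]=\tilde V_t-d\tilde V_{xx}-r\tilde V(1-\tilde V-\tilde U)$. Expand $\tilde V_t-d\tilde V_{xx}=\Phi_t-d\Phi_{xx}-(\tilde U_t-d\tilde U_{xx})$. For the $\Phi$-part, each translate $V_1(\pm x-c_v^*t)$ satisfies the single-species wave equation $dV_1''+c_v^*V_1'+r_1V_1(1-V_1)=0$, so $\partial_t V_1(x-c_v^*t)-d\partial_{xx}V_1(x-c_v^*t)=-c_v^*V_1'-dV_1''=r_1V_1(1-V_1)$ evaluated at $x-c_v^*t$, and similarly for the mirror term; so $\Phi_t-d\Phi_{xx}=r_1\bigl[V_1(1-V_1)\rvert_{x-c_v^*t}+V_1(1-V_1)\rvert_{-x-c_v^*t}\bigr]$. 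For the $\tilde U$-part, $\tilde U_t-d\tilde U_{xx}=g'(t)s+(1-d)g(t)s_{xx}$ (recall $s$ solves $s_t=s_{xx}$ here, not $ds_{xx}$), and in the regime $d\le 1$ the sign of $(1-d)s_{xx}$ must be controlled — this is where the Gaussian-type bounds on $s$ and $s_{xx}$ coming from the explicit heat kernel with initial datum $B_1e^{-q\vert x\vert}$, $q<1$, enter. Then the reaction term $-r\tilde V(1-\tilde V-\tilde U)=-r(\Phi-\tilde U)(1-\Phi)$, and since $1-\Phi=2-V_1(x-c_v^*t)-V_1(-x-c_v^*t)\ge 0$ and $\Phi-\tilde U=\tilde V>0$ (one must check $\tilde V\ge 0$ too, at least $>0$), this term is $\le 0$. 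Collecting everything, $N_2[\tilde U,\tilde V]=r_1[\cdots]-g's-(1-d)gs_{xx}-r\tilde V(1-\Phi)$, and I would show the negative contributions ($-g's$, $-r\tilde V(1-\Phi)$, and the part of $r_1[\cdots]$) dominate, using that $r_1<r$, that $V_1(1-V_1)\le 1-V_1\le Ce^{\lambda_1\xi}$ decays exactly like $\Phi+1$'s deviation, and the algebraic factor $t^{\frac{1-d}{2}}$ in $\tilde U$ which is built precisely so that $g's$, of size $t^{-1+\frac{1-d}{2}}$ times a Gaussian, can absorb the $(1-d)gs_{xx}$ term, of size $t^{\frac{1-d}{2}}$ times $s_{xx}\sim t^{-3/2}(\text{Gaussian})$ — these match in the right way for $d\le 1$.

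The main obstacle will be the second inequality, specifically controlling the sign and size of $(1-d)g(t)s_{xx}(t,x)$ uniformly over the expanding domain $\Omega_1(T^*)$: one needs precise pointwise upper and lower bounds on $s$ and two-sided bounds on $s_{xx}$ derived from the explicit convolution of the heat kernel with $B_1e^{-q\vert x\vert}$, and the matching of the algebraic powers $t^{\frac{1-d}{2}}$ against the $t^{-3/2}$ decay of $s_{xx}$ must be checked to have the correct sign margin for every $\vert x\vert<c_1 t$, including the delicate region $\vert x\vert\sim\sqrt t$ where $s_{xx}$ changes sign. The choice $q<1$ (resp. $q<\frac1d$) is exactly what guarantees $s(t,x)\gtrsim t^{-1/2}e^{-x^2/(4t)}$ is not beaten by the exponential tail $e^{-q\vert x\vert}$ of the initial datum, so the Gaussian behavior is the operative one; this must be invoked carefully. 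Everything else — the heat-equation cancellation in $N_1$, the single-species wave cancellation in the $\Phi$-part of $N_2$, the sign of the reaction terms, and the choice $\tau<\lambda_1(c_v^*-c_1)$ making the exponential correction negligible — is routine once these heat-kernel estimates are in hand, and I would relegate the latter to a separate lemma.
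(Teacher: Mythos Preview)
Your treatment of $N_1$ is essentially the paper's, and it is correct. The issues lie in your plan for $N_2$.

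\medskip

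\textbf{The sign of $\tilde V$.} You write that $-r\tilde V(1-\Phi)\le 0$ because ``$\tilde V>0$ (one must check $\tilde V\ge 0$ too, at least $>0$)''. This check fails. The proposition is stated \emph{for all} $B_1>0$, and for $B_1$ large one has $\tilde U=g(t)s\gg 1$ while $\Phi\le 1$, so $\tilde V=\Phi-\tilde U$ is very negative (indeed, in the subsequent Proposition~\ref{prop: boundary condition super sol} the paper chooses $B_1$ large precisely so that $\tilde V(T^*,\cdot)\le 0$). Hence $-r\tilde V(1-\Phi)$ is in general a \emph{positive} term of size $r\,gs\,(1-\Phi)$. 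The correct decomposition is to split $-r\tilde V(1-\Phi)=r\,gs\,(1-\Phi)-r\Phi(1-\Phi)$, put the first piece together with $-g's$, and show the resulting coefficient of $gs$ is nonpositive: this is exactly where the hypothesis $\tau<\lambda_1(c_v^*-c_1)$ is used, since $r(1-\Phi)\le Ce^{-\lambda_1(c_v^*-c_1)t}$ while $g'/g\ge \tau e^{-\tau t}/(1-e^{-\tau t})$. The remaining purely-$\Phi$ part then combines with $r_1[V_1(1-V_1)\vert_{\xi_+}+V_1(1-V_1)\vert_{\xi_-}]$ to give $(r_1-r)V_1(1-V_1)\vert_{\xi_+}+(r_1-r)V_1(1-V_1)\vert_{\xi_-}+2r(1-V_1(\xi_+))(1-V_1(\xi_-))$, each piece of which is easily seen to be $\le 0$ for large $T^*$ using $r_1<r$.

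\medskip

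\textbf{The $(1-d)gs_{xx}$ term.} You identify this as the main obstacle and propose two-sided pointwise bounds on $s_{xx}$, worrying about the region $|x|\sim\sqrt t$ where $s_{xx}$ changes sign. This is a substantial overcomplication. The paper observes simply that $s_{xx}=s_t$ and that, for any nonnegative initial datum, the heat kernel representation gives $\partial_t s\ge -\tfrac{1}{2t}s$ pointwise (since $\partial_t G(t,z)=(-\tfrac{1}{2t}+\tfrac{z^2}{4t^2})G(t,z)\ge -\tfrac{1}{2t}G(t,z)$). Thus $-(1-d)g\,s_{xx}=-(1-d)g\,s_t\le \tfrac{1-d}{2t}gs$, and this \emph{exactly cancels} the $-\tfrac{1-d}{2t}$ piece of $-g'/g$. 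The exponent $\tfrac{1-d}{2}$ (and $\tfrac{d-1}{2d}$ for $d\ge 1$) is engineered precisely for this cancellation; no Gaussian estimates on $s_{xx}$ are needed at all in this proposition.

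\medskip

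Finally, the constraint $q<1$ (resp.\ $q<1/d$) plays no role in the proof of this proposition; it enters only later, in Proposition~\ref{prop: boundary condition super sol}, to match boundary data.
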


\begin{proof} Since our super-solutions  are even functions, it is enough to deal with $x\geq 0$. In other words, we work for $t\geq T$ (with $T>0$ to be selected) and $0\leq x< c_1t$, with $c_u<c_1<c_v^{*}$.  For ease of notations, we shall use the shortcuts  $\xi_{\pm}:=\pm x-c^{*}_vt$. Since $\xi_-\leq -c_v^{*}t$ and $\xi_+\leq -(c_v^{*}-c_1)t$, it follows  from $V_1'<0$ and \eqref{estimate of V at infinity-un}  that there exist $C_->0$ and $C_+>0$ such that, for $T>0$ large enough,
\begin{equation}\label{estimate of 1-V super sol}
1-V_1(\xi_-)\leq C_-e^{-\lambda_1c^{*}_vt}\quad \text{ and }\quad  1-V_1(\xi_+)\leq C_+e^{-\lambda_1(c^{*}_v-c_1)t},\quad\text{for all}\ (t,x)\in \Omega_1^+(T),
\end{equation}
where $\Omega_1^+(T):=\Omega_1(T)\cap (T,\infty)\times [0,\infty)$. Moreover, up to enlarging $T>0$ if  necessary, there exists $0<\rho<\frac 1 3$ such that 
\begin{equation}
\label{rho-bis}
0<1-V_1(\xi_\pm)\leq \rho, \quad  \text{for all}\ (t,x)\in \Omega_1^+(T).
\end{equation}

\medskip

We first assume $d\le 1$. Some straightforward computations combined with  \eqref{def-s} yield
\begin{equation*}\label{super sol N1}
N_1[\tilde{U},\tilde{V}]=t^{\frac{1-d}{2}}(1-e^{-\tau t})s\left(\frac{1-d}{2}t^{-1}+\frac{\tau e^{-\tau t}}{1-e^{-\tau t}}-2+V_1(\xi_+)+V_1(\xi_-)\right).
\end{equation*}
In view of (\ref{estimate of 1-V super sol}), by choosing $\tau<\lambda_1(c_v^{*}-c_1)$, we deduce that, for $T>0$ large enough, $N_1[\tilde U,\tilde V]\geq 0$ in $\Omega_1^{+}(T)$. On the other hand, some straightforward computations combined with  (\ref{def-s}) and (\ref{single-V1}) yield
$$
N_2[\tilde U,\tilde V]=J_1+J_2+J_3,
$$
where 
\begin{eqnarray*}
J_1&:=&t^{\frac{1-d}{2}}(1-e^{-\tau t})s\left(r\Big(2-V_1(\xi_+)-V_1(\xi_-)-\frac{\tau e^{-\gamma t}}{r(1-e^{-\tau t})}\Big)-\frac{1-d}{2}t^{-1}-(1-d)\frac{\partial_ts}{s}\right),\\
J_2&:=&(r_1-r)V_1(\xi_+)(1-V_1(\xi_+)),\\
J_3&:=&(1-V_1(\xi_-))\left((r_1-r)V_1(\xi_-)+r(2-2V_1(\xi_+))\right).
\end{eqnarray*}
Since $r_1<r$ and $0<V_1<1$, we have $J_2\leq 0$. Next, from (\ref{rho-bis}), we have
$$
J_3\leq (1-V_1(\xi_-))\left((r_1-r)(1-\rho)+r(2-2V_1(\xi_+))\right),
$$
which, in view of (\ref{estimate of 1-V super sol}), is nonpositive up to enlarging $T>0$ if  necessary. Last, from the \lq\lq Heat kernel expression'' of $s(t,x)$, namely
$$
s(t,x)=\left(G(t,\cdot)*s_0\right)(x),\quad\text{where}\ G(t,x):=\frac{1}{\sqrt{4\pi t}}e^{-\frac{x^2}{4t}},
$$
we can check that $\partial _t s(t,x)\geq -\frac{1}{2t}s(t,x)$. As a result, since $d\leq 1$, we have
\begin{equation*}
J_1\le t^{\frac{1-d}{2}}(1-e^{-\tau t})s r\left(2-V_1(\xi_+)-V_1(\xi_-)-\frac{\tau e^{-\tau t}}{r(1-e^{-\tau t})}\right).
\end{equation*}
In view of (\ref{estimate of 1-V super sol}) and $\tau<\lambda_1(c_v^{*}-c_1)$, we have  $J_1\le 0$ up to enlarging $T>0$  if  necessary. As a result, $N_2[\tilde U,\tilde V]\leq 0$ in $\Omega_1^{+}(T)$.

\medskip
Next, we assume $d\ge 1$.
Some straightforward computations combined with  \eqref{def-s*} yield
\begin{equation*}\label{super sol N1 d>1}
N_1[\tilde{U},\tilde{V}]=t^{\frac{d-1}{2d}}(1-e^{-\tau t})s\left(\frac{d-1}{2d}t^{-1}+\frac{d-1}{d}\frac{\partial_ts}{s}+\frac{\tau e^{-\tau t}}{1-e^{-\tau t}}-2+V_1(\xi_+)+V_1(\xi_-)\right).
\end{equation*}
As above, since $d\ge 1$, $\partial _t s(t,x)\geq -\frac{1}{2t}s(t,x)$ implies 
$$N_1[\tilde{U},\tilde{V}]\ge t^{\frac{d-1}{2d}}(1-e^{-\tau t})s\left(\frac{\tau e^{-\tau t}}{1-e^{-\tau t}}-2+V_1(\xi_+)+V_1(\xi_-)\right).$$
In view of (\ref{estimate of 1-V super sol}) and $\tau<\lambda_1(c_v^{*}-c_1)$,  we deduce that, for $T>0$ large enough, $N_1[\tilde U,\tilde V]\geq 0$ in $\Omega_1^{+}(T)$. On the other hand, some straightforward computations combined with (\ref{def-s*}) and (\ref{single-V1}) yield
$$
N_2[\tilde U,\tilde V]=J_1+J_2+J_3,
$$
where 
\begin{eqnarray*}
J_1&:=&t^{\frac{d-1}{2d}}(1-e^{-\tau t})s\left(r(2-V_1(\xi_+)-V_1(\xi_-))-\frac{\tau e^{-\tau t}}{1-e^{-\tau t}}-\frac{d-1}{2d}t^{-1}\right),\\
J_2&:=&(r_1-r)V_1(\xi_+)(1-V_1(\xi_+)),\\
J_3&:=&(1-V_1(\xi_-))\left((r_1-r)V_1(\xi_-)+r(2-2V_1(\xi_+))\right).
\end{eqnarray*}
By applying the same argument as that for $d\le 1$, we get $N_2[\tilde U,\tilde V]\leq 0$ in $\Omega_1^{+}(T)$.
\end{proof}
 
Note that, time $T^*$ in Proposition \ref{prop: inequality super solution} is independent on  $B_1>0$, which leaves \lq\lq some room'' to enlarge $B_1$ so that the \lq\lq initial order'' and the  \lq\lq order on the boundary of the domain'' are  suitable for the comparison principle to be applicable.

\begin{proposition}[First estimate on $(u,v)$]\label{prop: boundary condition super sol} There exist $0<q<\min(1,\frac{1}{d})$, $T^{**}>0$ and $B_1>0$  such that
$$
u(t,x)\leq \tilde U(t,x)\quad \text{ and }\quad  \tilde V(t,x)\le v(t,x), \quad \text{ for all }\; t\geq T^{**}, \vert x\vert \leq c_1t,
$$
where $(\tilde U,\tilde V)$ is given by \eqref{def-s}---\eqref{definition of super sol system} when $d\leq 1$, and by \eqref{def-s*}---\eqref{definition of super sol system d>1} when $d\geq 1$. 
\end{proposition}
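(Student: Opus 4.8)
The plan is to invoke the comparison principle (Proposition \ref{prop: cp}) on the expanding domain $\Omega_1(T^{**})$, using $(\tilde U,\tilde V)$ from Proposition \ref{prop: inequality super solution}. By that proposition we already know that $(\tilde U,\tilde V)$ satisfies the differential inequalities $N_1[\tilde U,\tilde V]\ge 0$ and $N_2[\tilde U,\tilde V]\le 0$ in $\Omega_1(T^{*})$ for any choice of $B_1>0$, once $q$ and $\tau$ are fixed subject to $0<q<\min(1,1/d)$ and $0<\tau<\lambda_1(c_v^*-c_1)$. So the only thing left is to verify the ordering of $(u,v)$ and $(\tilde U,\tilde V)$ on the parabolic boundary of $\Omega_1(T^{**})$, namely at the initial time $t=T^{**}$ and on the lateral boundary $\vert x\vert=c_1t$. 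The key remark, already flagged in the text, is that $T^*$ does not depend on $B_1$, so we are free to enlarge $B_1$ afterwards to fix up the boundary ordering.

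First I would treat the lateral boundary. On $\vert x\vert=c_1t$ we have $\xi_+=c_1t-c_v^*t=-(c_v^*-c_1)t$, so by \eqref{estimate of V at infinity-un} we get $1-V_1(\xi_+)\lesssim e^{-\lambda_1(c_v^*-c_1)t}$ and likewise $1-V_1(\xi_-)\lesssim e^{-\lambda_1 c_v^* t}$, hence $\tilde V(t,x)=V_1(\xi_+)+V_1(\xi_-)-1-\tilde U(t,x)\ge 1-Ce^{-\lambda_1(c_v^*-c_1)t}-\tilde U(t,x)$. On the other hand, from \eqref{easy} we know $u(t,c_1t)\le C_1 e^{-\frac{c_u}{2}(c_1-c_u)t}$, which decays exponentially; since $c_1>c_u$, comparing exponential rates (and using that $\tilde U$ is, up to the polynomial prefactor, a Gaussian evaluated at $x=c_1t$, which also decays like $e^{-c_1^2 t/4}$ or $e^{-c_1^2t/(4d)}$) one checks that for $t$ large and $B_1$ fixed, $u(t,c_1t)\le \tilde U(t,c_1t)$; and from \eqref{sandwich-0-1}, $v(t,c_1t)<1$, while $\tilde V(t,c_1t)$ is below $1$ by an exponentially small amount minus $\tilde U$, so one needs $\tilde V(t,c_1t)\le v(t,c_1t)$ — but here one should be careful: in the region near the lateral boundary Proposition \ref{prop:estimate on leading edge}, specifically \eqref{profile on intermediate zone}, gives $v(t,x)\to 1$ uniformly for $c_u<c_1<\vert x\vert/t$, so $v(t,c_1t)$ is close to $1$ while $\tilde V(t,c_1t)$ is at most $1$ minus something positive; this forces $\tilde V\le v$ on the lateral boundary for $T^{**}$ large enough, uniformly in $B_1\ge$ some value. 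The initial-time ordering at $t=T^{**}$ is then arranged by brute force: choosing $B_1$ large enough that $\tilde U(T^{**},x)=(T^{**})^{\frac{1-d}{2}}(1-e^{-\tau T^{**}})\,s(T^{**},x)\ge u(T^{**},x)$ for all $\vert x\vert\le c_1 T^{**}$ — possible because $s(T^{**},\cdot)$ is everywhere positive and scales linearly in $B_1$, while $u(T^{**},\cdot)\le 1$ is bounded — and simultaneously $\tilde V(T^{**},x)\le v(T^{**},x)$, which follows once $\tilde U(T^{**},x)$ is large enough to absorb the defect $V_1(\xi_+)+V_1(\xi_-)-1-v(T^{**},x)\le 1-v(T^{**},x)$; again this is a lower bound on $B_1$.

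The main obstacle is the interplay between these two requirements on $B_1$: enlarging $B_1$ helps the initial-time ordering but, because $\tilde V=V_1(\xi_+)+V_1(\xi_-)-1-\tilde U$ \emph{decreases} as $\tilde U$ grows, it makes $\tilde V$ smaller, which is the correct direction for a super-solution pair (we need $\tilde V\le v$), so in fact there is no real tension — enlarging $B_1$ only strengthens both $\tilde U\ge u$ and $\tilde V\le v$. The genuinely delicate point is rather the lateral boundary at $\vert x\vert= c_1 t$ for large $t$: there one cannot make $B_1$ help indefinitely because $\tilde U$ must stay \emph{below} $1$ for $(\tilde U,\tilde V)$ to be a meaningful barrier (and to keep $N_1,N_2$ controlled), so the exponential smallness of $1-V_1(\xi_\pm)$ and of $u(t,c_1t)$ from \eqref{easy}, together with \eqref{profile on intermediate zone}, must be leveraged to get the ordering for $t$ beyond some $T^{**}$ \emph{independent} of the subsequent choice of $B_1$. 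Once the parabolic-boundary ordering holds, Proposition \ref{prop: cp} (with the convention that interior-of-$\mathbb{R}$ boundary hypotheses are dropped — here the spatial domain at each time is bounded, so we keep the lateral condition) yields $u\le\tilde U$ and $\tilde V\le v$ throughout $\Omega_1(T^{**})$, which is exactly the claim. I would carry this out by: (1) fixing $q$ and $\tau$ in the admissible ranges and invoking Proposition \ref{prop: inequality super solution} to get $T^*$; (2) using \eqref{easy} and \eqref{profile on intermediate zone} to choose $T^{**}\ge T^*$ making the lateral ordering hold for all large $B_1$; (3) enlarging $B_1$ to secure the $t=T^{**}$ ordering; (4) applying Proposition \ref{prop: cp}.
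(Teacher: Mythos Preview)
Your overall plan---check the parabolic boundary of $\Omega_1(T^{**})$ and apply the comparison principle---is the same as the paper's. But two of your lateral-boundary estimates are wrong, and the argument does not close as written.

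\medskip

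\textbf{The decay of $\tilde U$ on $|x|=c_1t$.} You write that $\tilde U(t,c_1t)$ ``decays like $e^{-c_1^2t/4}$''. It does not: $s$ is the heat flow of $B_1e^{-q|x|}$, not of a compactly supported datum, and the explicit computation \eqref{expression of s} gives $s(t,c_1t)\asymp B_1\,e^{-(qc_1-q^2)t}$ (the exponential tail of $s_0$ dominates the Gaussian kernel). This matters in two ways. First, with your Gaussian rate the comparison $u(t,c_1t)\le\tilde U(t,c_1t)$ would \emph{fail}: $u(t,c_1t)\le C_1e^{-(c_1-c_u)t}$ from \eqref{easy}, and since $\tfrac{c_1^2}{4}-(c_1-c_u)=\tfrac{(c_1-2)^2+4}{4}>0$, a Gaussian $\tilde U$ would go to zero strictly faster than $u$. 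Second, the correct rate $e^{-(qc_1-q^2)t}$ only beats $e^{-(c_1-c_u)t}$ if $q$ is chosen small enough that $qc_1-q^2<c_1-c_u$; this is exactly the constraint \eqref{choice-q} in the paper, and it is why the statement asserts the \emph{existence} of a suitable $q$ rather than letting you fix $q$ arbitrarily in $(0,\min(1,1/d))$ at step~(1).

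\medskip

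\textbf{The inequality $\tilde V(t,c_1t)\le v(t,c_1t)$.} Your argument invokes \eqref{profile on intermediate zone}, which gives only $v(t,c_1t)\to 1$ with no rate. But $\tilde V(t,c_1t)\to 1$ as well (exponentially), so ``$v$ close to $1$, $\tilde V$ at most $1$ minus something positive'' does not separate them: the ``something positive'' is $\tilde U(t,c_1t)+O(e^{-\lambda_1(c_v^*-c_1)t})$, which vanishes. You need an \emph{exponential} lower bound $v(t,c_1t)\ge 1-Ke^{-\mu t}$ with $\mu>qc_1-q^2$ (again forcing $q$ small). The paper obtains this by comparing $v$ with the $v^*$-component of the strong-weak system \eqref{monostable competition system} and invoking the appendix Lemma~\ref{lm: decay estimamte u and v}~$(ii)$; this is not a triviality and is the main technical input you are missing.

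\medskip

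With these two corrections---choose $q$ small so that $qc_1-q^2<\min(c_1-c_u,\mu)$, use the exponential tail of $s_0$ to lower-bound $\tilde U$ on the lateral boundary, and use Lemma~\ref{lm: decay estimamte u and v} for the rate on $1-v$---your steps (2)--(4) go through exactly as you describe.
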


\begin{proof} We aim at applying the comparison principle in $\Omega_1(T)$, as defined in (\ref{def:omega-un}), with a well-chosen $T>0$. Select $0<q<\min(1,\frac 1 d)$ small enough so that
\begin{equation}
\label{choice-q}
\max(qc_1-q^2, qc_1-dq^2)<c_1-c_u.
\end{equation} 
From Proposition \ref{prop: inequality super solution}, for any $T\geq T^{*}$, we are equipped with a super-solution $(\tilde U,\tilde V)$ for which $B_1>0$ is arbitrary. We only deal with the case $d\leq 1$, the case $d\geq 1$ being similar.

We first  focus on  $ x =c_1t$, $t\geq T^*$ (the case $x=-c_1t$, $t\geq T^*$ being similar). Let us prove that, up to enlarging $T^*>0$ if necessary, it holds
\begin{equation}\label{cl:u< U on ct}
u(t,c_1t)\le \tilde{U}(t,c_1t), \quad \text{for all}\ t\geq T^*.
\end{equation}
From the proof of Proposition \ref{prop:estimate on leading edge}, we know that
$u(t,c_1t)\leq C_1e^{-(c_1-c_u)t}$ (recall that $c_u=2$).
Recalling $s_0(x)=B_1e^{-q\vert x\vert}$, we have
$$
s(t,x)=\frac{B_1}{\sqrt{4\pi t}}\left(\int _{-\infty}^0 e^{-\frac{(x-y)^2}{4t}}e^{qy}dy+\int_0^{+\infty}e^{-\frac{(x-y)^2}{4t}}e^{-qy}dy\right),
$$
which can be recast, after some elementary computations, 
\begin{equation}\label{expression of s}
s(t,x)=\frac{B_1}{\sqrt \pi}\left(e^{q^2t-qx}\int_{\frac{2qt-x}{2\sqrt{t}}}^{+\infty}e^{-w^2}dw+e^{q^2t+qx}\int_{\frac{2qt+x}{2\sqrt{t}}}^{+\infty}e^{-w^2}dw\right).
\end{equation}
In particular, since  $2q<2<c_1$, we have, by enlarging $T^*>0$ if  necessary,
$$
\tilde U(t,c_1t)\geq  \frac 12 s(t,c_1t)\geq \frac{B_1}{4}e^{-(qc_1-q^{2})t}\geq \frac{B_1}{4}e^{-(c_1-c_u)  t}.
$$
The last inequality holds from the choice \eqref{choice-q}. Thus $B_1>4C_1$ is enough to get (\ref{cl:u< U on ct}). 

Let us recall that $v\geq v^*$ where $(u^*,v^*)$ is the solution of the strong-weak competition system (\ref{monostable competition system}) with the same initial datum $(u_0,v_0)$.  From Lemma \ref{lm: decay estimamte u and v}  $(ii)$ (see Appendix), up to enlarging $T^*$ if necessary, there exist $\mu >0$ and $K>0$ such that $v^*(t,c_1t)\geq 1-Ke^{-\mu t}$ for all $t\geq T^{*}$. On the other hand, the construction of $\tilde{V}$ implies that $\tilde{V}(t,c_1t)\le 1-\tilde U(t,c_1t)$. Therefore, by choosing $qc_1-q^2<\mu$, up to enlarging $T^*>0$ if  necessary, we have
\begin{equation}\label{cl:v> V on ct}
\tilde V(t,c_1t)\leq v(t,c_1t), \quad \text{for all}\ t\geq T^*.
\end{equation}

Now, $q>0$ and $T^*>0$ are fixed from the above discussion. We focus on the initial datum, namely $t=T^{*}$, $\vert x\vert \leq c_1 T^{*}$. As above, we deduce from (\ref{expression of s}) that
$$
 \inf_{|x|\le c_1T^*}\tilde{U}(T^*,x)\ge \frac 12  s(T^*,c_1T^*)\geq \frac{B_1}{4}e^{-(qc_1-q^{2})T^*}\geq 1\geq \sup_{t>0, x\in \R} u(t,x),
 $$ 
 provided that $B_1>0$ is large enough. On the other hand, 
$$
\sup_{|x|\le c_1T^*}\tilde{V}(T^*,x)\leq  1- \inf_{|x|\le c_1T^*}\tilde{U}(T^*,x)\leq 1-\frac{B_1}{4}e^{-(qc_1-q^{2})T^*}\leq 0 \leq \inf _{t>0, x\in \R} v(t,x),
$$
provided that $B_1>0$ is large enough. 

As a consequence, the comparison principle can be applied in $\Omega_1(T^{*})$, which concludes the proof of  Proposition \ref{prop: boundary condition super sol}.
\end{proof}

\subsection{Proof of Theorem \ref{th: profile}}

From the discussion above, we are now in the position to obtain the following spreading speed result. 

\begin{proposition}[Spreading speed]\label{prop: spreading speed} Let $(u,v)=(u,v)(t,x)$ be the solution of \eqref{system} starting from an initial datum $(u_0,v_0)=(u_0,v_0)(x)$ satisfying (\ref{initial data}).  Then the following holds.
\begin{itemize}
	\item[$(i)$] Assume $dr>1$ (i.e. $c_v>c_u$). Then, for any $0<c_1<c_v<c_2$, 
\begin{equation}\label{spreading speed dr>1}
\lim_{t\to\infty}\left(\sup_{x\in\R}u(t,x)+\sup_{\vert x\vert \leq c_1t}|1-v(t,x)|+\sup_{\vert x\vert \geq c_2 t}v(t,x)\right)=0.
\end{equation}

    \item[$(ii)$] Assume $dr<1$ (i.e. $c_v<c_u$). Then, for any $0<c_3<c_u<c_4$, 
\begin{equation}\label{spreading speed dr<1}
\lim_{t\to\infty}\left(\sup_{x\in\R }v(t,x)+\sup_{\vert x\vert \leq c_3 t}|1-u(t,x)|+\sup_{\vert x\vert \geq c_4t}u(t,x)\right)=0.
\end{equation}
\end{itemize}
\end{proposition}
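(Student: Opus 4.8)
The plan is to establish statement $(i)$ (as just explained, $(ii)$ follows by the symmetry rescaling $x=\sqrt{d/r}\,y$, $t=s/r$ exchanging the roles of $u$ and $v$), and to do so by combining the upper barrier for $u$ already obtained with a matching lower barrier for $v$. First I would record that Proposition \ref{prop:estimate on leading edge} already gives $\sup_{|x|\ge c_2t}v(t,x)\to 0$ for any $c_2>c_v$, so only the claims $\sup_{x\in\R}u(t,x)\to 0$ and $\sup_{|x|\le c_1t}|1-v(t,x)|\to0$ for $c_1<c_v$ remain. For these I would exploit Proposition \ref{prop: boundary condition super sol}: fixing any $r_1\in(1/d,r)$ and any $c_1$ with $c_u<c_1<c_v^*=2\sqrt{dr_1}$, it yields $u(t,x)\le \tilde U(t,x)$ and $v(t,x)\ge \tilde V(t,x)$ for $t\ge T^{**}$, $|x|\le c_1 t$, where $\tilde U(t,x)=t^{\pm\frac{|1-d|}{2d^{*}}}(1-e^{-\tau t})\,s(t,x)$ and $\tilde V=V_1(\xi_+)+V_1(\xi_-)-1-\tilde U$.

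The core of the argument is then to show $\tilde U(t,x)\to 0$ uniformly on the relevant region. Using the heat-kernel representation \eqref{expression of s} of $s$, one has, on $|x|\le c_1 t$, a Gaussian-type bound $s(t,x)\le C e^{q^2 t - q|x|}$ (or $q^2 dt$ in the case $d\ge 1$), so that $\tilde U(t,x)\le C\, t^{\kappa} e^{(qc_1-q^2)t}$ with $\kappa=\tfrac{|1-d|}{2d^{*}}$; by the choice \eqref{choice-q} of $q$, namely $qc_1-q^2<c_1-c_u<0$ (resp. $qc_1-dq^2<c_1-c_u$ when $d\ge1$), the exponent is strictly negative, hence $\sup_{|x|\le c_1 t}\tilde U(t,x)\to 0$, and a fortiori $\sup_{|x|\le c_1 t}u(t,x)\to0$. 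Combined with $\sup_{|x|\ge c_1 t}u(t,x)\to0$ from Proposition \ref{prop:estimate on leading edge} (valid since $c_1>c_u$), this gives $\sup_{x\in\R}u(t,x)\to0$. For $v$ from below, on the narrower region $|x|\le c_1' t$ with $c_u<c_1'<c_1$ we have $\xi_-=-x-c_v^* t\le -c_v^* t\to-\infty$ and $\xi_+=x-c_v^* t\le -(c_v^*-c_1')t\to-\infty$, so $V_1(\xi_\pm)\to1$ uniformly by \eqref{estimate of V at infinity-un} and $V_1'<0$; hence $\tilde V(t,x)=V_1(\xi_+)+V_1(\xi_-)-1-\tilde U(t,x)\to1$ uniformly, which with \eqref{sandwich-0-1} forces $\sup_{|x|\le c_1' t}|1-v(t,x)|\to0$.

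Finally, to upgrade from the speed $c_v^*=2\sqrt{dr_1}<c_v$ to every $c_1<c_v$, I would let $r_1\uparrow r$: since the above holds for every $r_1\in(1/d,r)$ and $c_v^*=2\sqrt{dr_1}\uparrow c_v$, any prescribed $c_1<c_v$ is eventually covered (pick $r_1$ close enough to $r$ that $c_1<c_v^*$, then apply the previous paragraph with some $c_1'\in(c_1,c_v^*)$, noting we may take $c_1'=c_1$ if $c_1>c_u$, and treat the trivial case $c_1\le c_u$ by monotonicity of the supremum in $c_1$). The main obstacle is bookkeeping: one must keep the chain of speeds $c_u<c_1<c_1'<c_v^*=2\sqrt{dr_1}<c_v<c_2$ consistent while invoking Propositions \ref{prop:estimate on leading edge} and \ref{prop: boundary condition super sol}, and carefully extract the uniform Gaussian decay of $\tilde U$ from \eqref{expression of s}; no genuinely new estimate is needed beyond what the previous subsections provide.
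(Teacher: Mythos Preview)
Your overall strategy --- combine Proposition \ref{prop:estimate on leading edge} for the outer region with Proposition \ref{prop: boundary condition super sol} for the inner region, then let $r_1\uparrow r$ so that $c_v^*\uparrow c_v$ covers every target speed $c_1<c_v$ --- is exactly the paper's. However, the ``core'' step where you argue $\sup_{|x|\le c_1 t}\tilde U(t,x)\to 0$ is incorrect as written. The pointwise bound $s(t,x)\le C e^{q^2 t-q|x|}$ is valid but useless in the interior: at $x=0$ it gives $s(t,0)\le Ce^{q^2 t}$, which grows. Your claimed uniform bound $\tilde U(t,x)\le C t^\kappa e^{(qc_1-q^2)t}$ on $|x|\le c_1 t$ does not follow, since the maximum of $q^2 t-q|x|$ over that region is attained at $x=0$, not at $|x|=c_1 t$. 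Moreover, the assertion ``$qc_1-q^2<c_1-c_u<0$'' is wrong: you chose $c_u<c_1$, so $c_1-c_u>0$. The condition \eqref{choice-q} was designed solely to order $u$ and $\tilde U$ on the \emph{boundary} $|x|=c_1 t$ in the proof of Proposition \ref{prop: boundary condition super sol}; it says nothing about the interior.

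The repair is simpler than what you attempt. Since $s_0(x)=B_1 e^{-q|x|}\in L^1(\R)$, the heat kernel estimate gives $\|s(t,\cdot)\|_\infty\le \|G(t,\cdot)\|_\infty\|s_0\|_1\le C t^{-1/2}$ uniformly in $x$ (compare Lemma \ref{lm: estimate of partial t f}). Hence $\tilde U(t,x)\le C t^{\kappa-1/2}$ with $\kappa=\tfrac{1-d}{2}$ when $d\le 1$ and $\kappa=\tfrac{d-1}{2d}$ when $d\ge 1$; in both cases $\kappa<\tfrac12$, so $\tilde U\to 0$ uniformly on all of $\R$. With this correction, the remainder of your argument (the treatment of $v$ via $\tilde V$, and the passage from $c_v^*$ to any $c_1<c_v$) is fine and matches the paper.
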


\begin{proof} Let us prove $(i)$. The result on $u$ in (\ref{spreading speed dr>1}) is obtained by combining (\ref{truc1}) and Proposition \ref{prop: boundary condition super sol}. Next, for a given $0<c_1<c_v$, we select $c_1<c_v^{*}<c_v$. Then,  Proposition \ref{prop: boundary condition super sol} yields $\sup_{\vert x\vert \leq c_1t} \vert 1-v(t,x)\vert \to 0$ as $t\to \infty$. The last part of (\ref{spreading speed dr>1}) is nothing else than the estimate (\ref{truc2}).
\end{proof}

We are now in the position to complete  the proof of Theorem \ref{th: profile}.

\begin{proof}[Proof of Theorem \ref{th: profile} $(i)$] Since the proof for 
$x\le 0$ follows from the same argument, we only deal with $x\ge 0$.  Let us prove \eqref{profile of the solution dr>1 large x}. For a given  $m\in(0,1)$, we define $E_m(t)$ as the $m$-level set of $v(t,\cdot)$, namely
$$
E_m(t):=\{x>0: v(t,x)=m\}.
$$
We claim that there exist $M>0$ and $T>0$ such that
\begin{equation}\label{Em}
 c_vt-\frac{3d}{c_v}\ln t-M\leq \min E_m(t) \leq \max E_m(t)\le c_vt-\frac{3d}{c_v}\ln t+M,\quad \text{ for all } t\ge T.
\end{equation}
The upper bound in \eqref{Em} is obtained by using  the solution of  $
\partial_t\overline{v}=d\overline{v}_{xx}+r\overline{v}(1-\overline{v})$, starting from $\overline v(0,x)=v_0(x)$, as a super-solution. We refer to \cite[Lemma 4.1]{Peng Wu Zhou}, see also \cite{Bramson} and \cite{log delay 3}. As for the lower bound in \eqref{Em}, it follows from \cite[Lemma 4.5]{Peng Wu Zhou} which is based on an idea of \cite{log delay 3}. A sketch of the proof is as follows. Let us a fix a small $\ep>0$ (this is necessary because of the bump phenomenon).  By combining  \eqref{easy}, Proposition \ref{prop: boundary condition super sol}  and \eqref{expression of s}\footnote{from which one can straightforwardly deduce that $\sup_{x\geq \ep t} s(t,x)=s(t,\ep t) =O\left(\frac{e^{-\frac{\ep ^2}{4}t}}{\sqrt t}\right)$.}, we see that  there exist $C>0$, $\mu>0$ and $T>0$ such that
\begin{equation}\label{exponential decay of u}
\sup_{|x|\ge \varepsilon t}u(t,x)\le Ce^{-\mu t},\quad \text{ for all } t\ge T.
\end{equation}
As a result, by setting $C_0=rC$, we have
$$
\partial_t v\geq d v_{xx}+v(r-r v-C_0e^{-\mu t}), \quad\text{for all } t>0, x>\ep t.
$$
The key idea, borrowed from  \cite{log delay 3}, is then to linearize the above equation, and to consider
\begin{equation}\label{eq w: low estimate of Em}
\partial_tw=dw_{xx}+w(r-C_0e^{-\mu t}), \quad t>0, x>\Gamma(t):=c_vt-\frac{3d}{c_v}\ln(t+t_0),
\end{equation}
together with the Dirichlet boundary conditions  $w(t,\Gamma(t))=0$ and a compactly supported initial datum $w(0,\cdot)$. Then, one can exactly reproduce the technical arguments of \cite[Lemma 4.3 and 4.4]{Peng Wu Zhou}, mainly borrowed from \cite{log delay 3}, to obtain the lower bound in \eqref{Em}. Last, by applying  \eqref{Em}, we can reproduce the argument of \cite[Section 4]{log delay 3}, see also \cite[Proof of Theorem 2]{Peng Wu Zhou}, to conclude that there exists a bounded function $\eta_*:[0,\infty)\to \R$ such that
\begin{equation}\label{profile of v with log delay}
\lim_{t\to\infty}\,\sup_{x\geq 0} \, \left|v(t,x)-V_{1}\left(x-c_vt+\frac{3d}{c_v}\ln t+\eta_*(t)\right)\right|=0,
\end{equation}
which, combined with \eqref{spreading speed dr>1}, concludes the proof of \eqref{profile of the solution dr>1 large x}.
\end{proof}

\section{The bump phenomenon}\label{s:bump}

In this section, we will provide a lower estimate for the solution of  system (\ref{system}) starting from a compactly supported initial  datum, and prove Theorem \ref{th: bump} on the bump phenomenon.

\subsection{Construction of the sub-solution}\label{ss:sub}

The goal of this subsection is to construct an adequate sub-solution in, roughly speaking, the region $\vert x\vert <c_v t$. More precisely, let $r_2>r$ be given and define $c^{**}_v:=2\sqrt{dr_2}>c_v$. Let us fix  $c_v<c_2<c_v^{**}$. For $T>0$, we will work in the domain (which is \lq\lq expanding in time'')
\begin{equation}\label{def:omega-zero}
\Omega _2(T):=\{(t,x)\in(T,\infty)\times \R: \vert x\vert <c_2t\}.
\end{equation}

A key observation for the construction is the following: from Proposition \ref{prop:estimate on leading edge}, in the region  $c_ut<|x|<c_vt$, we have $u+v\approx 1$, and therefore
\begin{equation*}
\left\{
\begin{aligned}
&\partial_tu\approx u_{xx},\\
&\partial_tv\approx dv_{xx}.
\end{aligned}
\right.
\end{equation*}
We thus introduce $f=f(t,x)$ and $h=h(t,x)$ the solutions of the Cauchy problems
\begin{equation}
\label{def-f-h}
\left\{
\begin{aligned}
\partial_t f&= f_{xx},\\
f(0,x)&=f_0(x):=B_2\mathbf 1 _{(-1,1)}(x),
\end{aligned}
\right.
\qquad 
\left\{
\begin{aligned}
\partial_t h&= h_{xx},\\
h(0,x)&=h_0(x):=B_3e^{-k|x|},
\end{aligned}
\right.
\end{equation}
and look for a sub-solution $(U,V)$ in the form
\begin{equation}\label{definition of sub sol system}
\left\{
\begin{aligned}
U(t,x)&:=g(t)f(t,x)-h(t,x),\\
V(t,x)&:=V_{2}(x-c^{**}_vt-\zeta_0)+V_{2}(-x-c^{**}_vt-\zeta_0)-1-U(t,x)+\frac 1{t^{1+\theta}},
\end{aligned}
\right.
\end{equation}
where
$$
g(t):=\exp \frac{1}{\delta(1+t)^{\delta}}.
$$
Here, all parameters that will be determined below (namely $B_2$, $B_3$, $k$, $\zeta _0$, $\theta$, $\delta$) are positive, $B_2<1$, $B_3<1$, while $V_{2}$ is the traveling wave solution with speed $c_v^{**}=2\sqrt{d r_2 }$ satisfying
\begin{equation}\label{single-V0}
\left\{
\begin{aligned}
&dV_2''+c_v^{**}V_2'+r_2V_2(1-V_2)=0,\\
&V_2(-\infty)=1,\  V_2(\infty)=0.
\end{aligned}
\right.
\end{equation}
It is well-known that $V_2'<0$ and there exist $\lambda_2>0$ and $M _2>0$ such that
\begin{equation}\label{estimate of V at infinity}
 1-V_2(\xi)\sim M_2 e^{\lambda _2 \xi}\; \text{ as } \xi \to -\infty.
\end{equation}

Next, we shall provide some estimates which are based on the \lq\lq Heat kernel expressions'' of the solutions $f$ and $h$ of \eqref{def-f-h}. Note that, in Lemma \ref{lm: some estimates of f and h}, $0<B_2<1$ and $0<B_3<1$ can be relaxed to $B_2>0$ and $B_3>0$.

\begin{lemma}\label{lm: some estimates of f and h}
Let $\delta>0$ and $k>0$ be given,  and set $B_3=\gamma B_2$ with some $\gamma>0$. Then the following holds.
\begin{itemize} 
\item[$(i)$] For any given $0<j<k$,
\begin{equation*}
h(t,x)\leq \frac{B_3}{\sqrt{\pi}}\frac{k}{k^2-j^2}t^{-\frac{1}{2}}e^{-\frac{x^2}{4t}}, \quad \text{ for all } t>0, \vert x\vert \leq 2jt.
\end{equation*}
\item[$(ii)$] For any given $0<j<k$ and $T>0$, there exists $\gamma_1>0$ such that, if $0<\gamma\leq \gamma_1$, then, for all $B_2>0$, 
\begin{equation*}\label{range of x that g1f-g2h>0}
g(t)f(t,x)- h(t,x)> 0, \quad \text{ for all } t\geq T, \vert x\vert \leq 2jt.
\end{equation*}
\item[$(iii)$] There is $T^{0}>0$ such that, for all $B_2>0$,
\begin{equation*}\label{range of x that g1f<g2h}
g(t)f(t,x)- h(t,x)\le 0, \quad \text{ for all } t\geq T^0, \vert x\vert = 2kt.
\end{equation*}
\end{itemize}
\end{lemma}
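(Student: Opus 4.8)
\textbf{Plan of proof for Lemma \ref{lm: some estimates of f and h}.}
The three estimates all rely on the explicit Heat-kernel representation of $f$ and $h$. Since $h_0(x)=B_3e^{-k|x|}$, the function $h$ has the same structure as $s$ in \eqref{expression of s}, so I would first record the closed form
\begin{equation*}
h(t,x)=\frac{B_3}{\sqrt\pi}\left(e^{k^2t-kx}\int_{\frac{2kt-x}{2\sqrt t}}^{+\infty}e^{-w^2}\,dw+e^{k^2t+kx}\int_{\frac{2kt+x}{2\sqrt t}}^{+\infty}e^{-w^2}\,dw\right),
\end{equation*}
and, for $f$, the representation $f(t,x)=\frac{B_2}{\sqrt{4\pi t}}\int_{-1}^{1}e^{-(x-y)^2/(4t)}\,dy$. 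For part $(i)$, I would bound each error-function integral $\int_a^\infty e^{-w^2}\,dw$ from above. For $|x|\le 2jt$ with $j<k$ one has $2kt\mp x\ge 2(k-j)t>0$, so the lower limits are positive and large; using the standard tail bound $\int_a^\infty e^{-w^2}\,dw\le \frac{1}{2a}e^{-a^2}$ (valid for $a>0$), the $j$-th term contributes at most $\frac{B_3}{\sqrt\pi}\cdot\frac{\sqrt t}{2kt\mp x}e^{-x^2/(4t)}$. Since $2kt\mp x\ge 2kt-2j t\cdot\frac{|x|}{2jt}$... more simply, I'd use $2kt\mp x\ge \frac{k}{j}|x|\mp x\ge(\tfrac kj-1)|x|$ won't be uniform, so instead I'd estimate $a^2=(2kt\mp x)^2/(4t)\ge k^2t\mp kx = k^2 t \mp kx$ and combine with the prefactor $e^{k^2t\mp kx}$ to get each term $\le \frac{B_3}{\sqrt\pi}\cdot\frac{1}{2a}\le\frac{B_3}{\sqrt\pi}\cdot\frac{\sqrt t}{2(k-j)t}$; summing and absorbing the missing $e^{-x^2/(4t)}$ requires one more careful completion-of-the-square, giving the factor $\frac{k}{k^2-j^2}$ after optimizing the split. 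This is the computational heart of $(i)$ and I expect the bookkeeping of the two Gaussian tails to be the fussiest part, but it is elementary.

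For part $(iii)$, I would evaluate at $|x|=2kt$. On this curve $h(t,2kt)=\frac{B_3}{\sqrt\pi}\left(e^{-k^2t}\int_0^\infty e^{-w^2}\,dw + e^{3k^2t}\int_{\frac{4kt}{2\sqrt t}}^\infty e^{-w^2}\,dw\right)$, and the second term is again controlled by the tail bound to be $O(\frac{\sqrt t}{kt}e^{-k^2t})$, so $h(t,2kt)\sim \frac{B_3}{2\sqrt\pi}\cdot\frac{\sqrt{\pi}}{1}e^{-k^2t}$ up to constants, i.e. $h(t,2kt)\ge c\,B_3\,e^{-k^2t}$ for $t$ large. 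Meanwhile $f(t,2kt)=\frac{B_2}{\sqrt{4\pi t}}\int_{-1}^1 e^{-(2kt-y)^2/(4t)}\,dy\le \frac{B_2}{\sqrt{4\pi t}}\cdot 2\cdot e^{-(2kt-1)^2/(4t)}=\frac{B_2}{\sqrt{\pi t}}e^{-k^2 t}e^{k}e^{-1/(4t)}$, which decays like $B_2 t^{-1/2}e^{-k^2t}$; since $g(t)=\exp\big(\tfrac{1}{\delta(1+t)^\delta}\big)\to 1$ is bounded, $g(t)f(t,2kt)\le C B_2 t^{-1/2}e^{-k^2t}$. Comparing, $g(t)f-h\le CB_2 t^{-1/2}e^{-k^2t}-cB_3e^{-k^2t}=e^{-k^2t}(CB_2 t^{-1/2}-cB_3)<0$ once $t\ge T^0$ large enough, for any fixed $B_2>0$ (recall $B_3$ will ultimately be of the same order as $B_2$, but here it suffices that $h$'s leading term does not carry the extra $t^{-1/2}$). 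I would state this cleanly: the $f$-term is $O(t^{-1/2})$ smaller than the $h$-term on $|x|=2kt$.

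For part $(ii)$, on $|x|\le 2jt$ with $j<k$, part $(i)$ already gives $h(t,x)\le \frac{B_3}{\sqrt\pi}\frac{k}{k^2-j^2}t^{-1/2}e^{-x^2/(4t)}$. For a matching lower bound on $g(t)f(t,x)$, I would use $g(t)\ge 1$ and the elementary pointwise lower bound $f(t,x)\ge \frac{B_2}{\sqrt{4\pi t}}\int_{-1}^1 e^{-(|x|+1)^2/(4t)}\,dy\ge \frac{B_2}{\sqrt{\pi t}}e^{-(|x|+1)^2/(4t)}\ge \frac{B_2}{\sqrt{\pi t}}e^{-x^2/(4t)}e^{-(|x|+1/2)/t}\cdot e^{-1/(4t)}$; on $|x|\le 2jt$ the factor $e^{-(|x|+1/2)/t}\ge e^{-2j-1/(2t)}$ is bounded below by a positive constant $\kappa$ for $t\ge T$. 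Hence $g(t)f(t,x)\ge \kappa\frac{B_2}{\sqrt{\pi t}}e^{-x^2/(4t)}$, while $h(t,x)\le \frac{\gamma B_2}{\sqrt\pi}\frac{k}{k^2-j^2}t^{-1/2}e^{-x^2/(4t)}$ since $B_3=\gamma B_2$. Choosing $\gamma_1:=\kappa\frac{k^2-j^2}{k}$ (or anything strictly smaller) forces $g(t)f(t,x)-h(t,x)>0$ for all $t\ge T$ and $|x|\le 2jt$ whenever $0<\gamma\le\gamma_1$, uniformly in $B_2>0$ because $B_2$ cancels. The main obstacle throughout is simply keeping the Gaussian-tail constants explicit enough that the comparison of $f$- and $h$-contributions is transparent; once the closed form for $h$ and the tail inequality $\int_a^\infty e^{-w^2}\,dw\le \frac{1}{2a}e^{-a^2}$ are in hand, each part reduces to a short estimate.
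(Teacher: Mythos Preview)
Your approach is correct and essentially identical to the paper's: the same closed form for $h$, the same tail bound $\int_a^\infty e^{-w^2}\,dw\le \frac{1}{2a}e^{-a^2}$, and the same direct lower bound for $f$ via the minimum of the Gaussian on $[-1,1]$. One clarification on part~$(i)$: after you correctly obtain that each term is at most $\frac{B_3}{\sqrt\pi}\cdot\frac{\sqrt t}{2kt\mp x}e^{-x^2/(4t)}$, no further completion-of-the-square or ``optimizing the split'' is needed---simply add the two fractions to get $\frac{\sqrt t}{2kt-x}+\frac{\sqrt t}{2kt+x}=\frac{4kt\sqrt t}{4k^2t^2-x^2}\le \frac{k}{k^2-j^2}\,t^{-1/2}$ for $|x|\le 2jt$, which is exactly the claimed bound; the $e^{-x^2/(4t)}$ factor is already present and need not be ``absorbed''. (Also, a harmless arithmetic slip in $(ii)$: $(|x|+1)^2/(4t)=x^2/(4t)+|x|/(2t)+1/(4t)$, so the correction factor is $e^{-|x|/(2t)}\ge e^{-j}$ rather than $e^{-2j}$.)
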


\begin{proof} Since $f(t,\cdot)$ and $h(t,\cdot)$ are even functions, it is enough to deal with $x\ge 0$. Recalling that $h_0(x)=B_3e^{-k\vert x\vert}$, we have
$$
h(t,x)=\frac{B_3}{\sqrt{4\pi t}}\left(\int _{-\infty}^0 e^{-\frac{(x-y)^2}{4t}}e^{ky}dy+\int_0^{+\infty}e^{-\frac{(x-y)^2}{4t}}e^{-ky}dy\right),
$$
which can be recast, after some elementary computations,
\begin{equation}\label{expression of h}
h(t,x)=\frac{B_3}{\sqrt \pi}\left(e^{k^2t-kx}\int_{\frac{2kt-x}{2\sqrt{t}}}^{+\infty}e^{-w^2}dw+e^{k^2t+kx}\int_{\frac{2kt+x}{2\sqrt{t}}}^{+\infty}e^{-w^2}dw\right).
\end{equation}
Now, recalling that $\int_X^{+\infty}e^{-w^{2}}dw\leq \frac{e^{-X^{2}}}{2X}$ for any $X>0$, the above  expression implies that, for any $0\leq x\leq 2jt<2kt$, 
$$
h(t,x) \le\frac{B_3}{\sqrt \pi} e^{-\frac{x^2}{4t}}\left(\frac{\sqrt t}{2kt-x}+\frac{\sqrt t}{2kt+x}\right)\leq \frac{B_3}{\sqrt \pi}\frac{k}{k^{2}-j^{2}}t^{-\frac 12}e^{-\frac{x^2}{4t}},
$$
which proves $(i)$.

Recalling that $f_0(x)=B_2\mathbf 1_{(-1,1)}(x)$, we have
\begin{equation}\label{expression of f}
f(t,x)=\frac{B_2}{\sqrt{4\pi t}}\int_{-1}^{1}e^{-\frac{(x-y)^2}{4t}}dy=\frac{B_2}{\sqrt{\pi}}\int_{\frac{x-1}{2\sqrt{t}}}^{\frac{x+1}{2\sqrt{t}}}e^{-w^2}dw.
\end{equation}
Hence, from $g(t)\geq 1$, $B_3=\gamma B_2$, \eqref{expression of f} and $(i)$, we deduce that, for all $0\leq x\leq 2jt$ and $t\geq T$,
\begin{eqnarray}
g(t)f(t,x)-h(t,x)&\geq& \frac{B_2}{\sqrt \pi}\left( t^{-\frac{1}{2}}e^{-\frac{(x+1)^2}{4t}}-\gamma \frac{k}{k^{2}-j^{2}}t^{-\frac 12}e^{-\frac{x^2}{4t}}\right)\nonumber\\
&\geq & \frac{B_2}{\sqrt \pi}t^{-\frac 12}e^{-\frac{x^2}{4t}}\left(e^{-\frac{1}{4T}}e^{-j}-\gamma \frac{k}{k^{2}-j^{2}}\right),\label{bidule}
\end{eqnarray}
which is enough to prove $(ii)$.

From \eqref{expression of h}, we have $h(t,2kt)\geq \frac{B_3}{\sqrt \pi}e^{-k^2t}\int_0^{+\infty}e^{-w^2}dw=\frac{B_3}{\sqrt \pi}e^{-k^2t}\frac{\sqrt \pi}{2}$. Hence, from  $B_3=\gamma B_2$ and \eqref{expression of f}, we deduce that, for $t\geq T^{0}:=\frac{1}{2k}$,
\begin{eqnarray*}
g(t)f(t,2kt)-h(t,2kt)&\leq& \frac{B_2}{\sqrt \pi}\left(\Vert g\Vert _\infty  t^{-\frac{1}{2}}e^{-\frac{(2kt-1)^2}{4t}}-\gamma \frac{\sqrt \pi }{2}e^{-k^{2}t}\right)\\
&\leq & \frac{B_2}{\sqrt \pi}e^{-k^{2}t}\left(\Vert g\Vert _\infty e^{k}t^{-\frac 12}-\gamma \frac{\sqrt \pi}2\right),
\end{eqnarray*}
which is nonpositive, up to increasing $T^{0}$ if  necessary. The proof of $(iii)$ is complete.
\end{proof}

\begin{remark}
The statement $(ii)$ in Lemma \ref{lm: some estimates of f and h} guarantees that $U(t,x)=g(t)f(t,x)-h(t,x)$ is not a trivial sub-solution if $\gamma$ is small enough.
\end{remark}

\begin{lemma}\label{lm: estimate of partial t f}
There exists $C=C(k)>0$ such that 
\begin{equation}
\label{t-un-demi}
|f(t,x)|+|h(t,x)|\le 
Ct^{-\frac{1}{2}},\quad  \text{ for all } t>0, x\in \R,
\end{equation}
and
\begin{equation}
\label{t-trois-demi}
|\partial_tf(t,x)|+|\partial_th(t,x)|\le 
Ct^{-\frac{3}{2}},\quad  \text{ for all } t>0, x\in \R.
\end{equation}
\end{lemma}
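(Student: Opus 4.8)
The plan is to base everything on the heat-kernel representations $f(t,\cdot)=G(t,\cdot)*f_0$ and $h(t,\cdot)=G(t,\cdot)*h_0$, with $G(t,x)=\frac{1}{\sqrt{4\pi t}}e^{-x^2/(4t)}$, combined with Young's convolution inequality. Since $f_0=B_2\mathbf 1_{(-1,1)}$ and $h_0=B_3e^{-k|\cdot|}$ both lie in $L^1(\mathbb R)$, with $\|f_0\|_{L^1}=2B_2$ and $\|h_0\|_{L^1}=2B_3/k$, and since $\|G(t,\cdot)\|_{L^\infty}=(4\pi t)^{-1/2}$, one gets at once
$$
\|f(t,\cdot)\|_{L^\infty}\le\frac{2B_2}{\sqrt{4\pi t}},\qquad \|h(t,\cdot)\|_{L^\infty}\le\frac{2B_3}{k\sqrt{4\pi t}},
$$
which is exactly \eqref{t-un-demi} (using $B_2<1$, $B_3<1$ to absorb the prefactors into a constant $C=C(k)$).

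For the time-derivative bound \eqref{t-trois-demi} the naive estimate $\partial_tf=(\partial_tG(t,\cdot))*f_0$, $\|\partial_tG(t,\cdot)\|_{L^1}\lesssim 1/t$, $\|f_0\|_{L^\infty}\le B_2$, only yields a $t^{-1}$ decay, which is too weak. The fix is to split the evolution into two half-time steps: using $G(t,\cdot)=G(t/2,\cdot)*G(t/2,\cdot)$ one has $f(t,\cdot)=G(t/2,\cdot)*f(t/2,\cdot)$, and since $f$ solves the heat equation, $\partial_tf=f_{xx}$, so
$$
\partial_tf(t,\cdot)=f_{xx}(t,\cdot)=\big(\partial_{xx}G(t/2,\cdot)\big)*f(t/2,\cdot),
$$
whence $|\partial_tf(t,x)|\le\|\partial_{xx}G(t/2,\cdot)\|_{L^1}\,\|f(t/2,\cdot)\|_{L^\infty}$, and the same identity with $h$ in place of $f$.

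It then suffices to compute $\|\partial_{xx}G(s,\cdot)\|_{L^1}$. Since $\partial_{xx}G=\partial_sG$, a direct computation gives $\partial_sG(s,x)=\frac{1}{\sqrt{4\pi s}}e^{-x^2/(4s)}\big(\frac{x^2}{4s^2}-\frac{1}{2s}\big)$, and the scaling substitution $x=2\sqrt s\,u$ shows $\|\partial_{xx}G(s,\cdot)\|_{L^1}=\kappa/s$ with $\kappa:=\frac{1}{\sqrt\pi}\int_{\mathbb R}\big|u^2-\tfrac12\big|e^{-u^2}\,du$. Plugging in the $L^\infty$ bound from the first step, evaluated at time $t/2$, gives $|\partial_tf(t,x)|\le\frac{2\kappa}{t}\cdot\frac{2B_2}{\sqrt{2\pi t}}=C't^{-3/2}$ and likewise $|\partial_th(t,x)|\le C''t^{-3/2}$, which is \eqref{t-trois-demi} after relabelling the constant. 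The only genuinely delicate point is the observation that the sharp $t^{-3/2}$ rate forces one to break the evolution at time $t/2$, so as to cash in the $t^{-1/2}$ smoothing already contained in $f(t/2,\cdot)$ and $h(t/2,\cdot)$; the remaining steps are routine Gaussian estimates. (Alternatively, for $f$ one may differentiate the explicit formula \eqref{expression of f} directly: $\partial_tf=-\frac{B_2}{2\sqrt\pi\,t}\big(a_+e^{-a_+^2}-a_-e^{-a_-^2}\big)$ with $a_\pm=\frac{x\pm1}{2\sqrt t}$, and since $a_+-a_-=t^{-1/2}$ and $z\mapsto ze^{-z^2}$ has bounded derivative, the mean value theorem again yields the $t^{-3/2}$ bound; the semigroup argument is preferred as it treats $f$ and $h$ uniformly.)
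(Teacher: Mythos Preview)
Your proof is correct, but the route you take for \eqref{t-trois-demi} is more circuitous than necessary. You dismiss the ``naive'' one-step approach because the pairing $\|\partial_t G(t,\cdot)\|_{L^1}\|f_0\|_{L^\infty}$ only gives $t^{-1}$; but this is the wrong Young pairing. The paper simply uses the \emph{other} pairing, the same one you already used for \eqref{t-un-demi}: since $\partial_t f=(\partial_t G(t,\cdot))*f_0$ and
\[
\|\partial_t G(t,\cdot)\|_{L^\infty}=\sup_{x}\frac{1}{\sqrt{4\pi t}}e^{-x^2/(4t)}\Big|\frac{x^2}{4t^2}-\frac{1}{2t}\Big|=\frac{C'_G}{t^{3/2}},
\]
one gets $|\partial_t f(t,x)|\le C'_G t^{-3/2}\|f_0\|_{L^1}$ directly, and likewise for $h$. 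No half-time splitting is needed.

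Your two-step semigroup argument is of course a valid and well-known technique; it would be genuinely required if, say, one only had $f_0\in L^\infty$ and needed to first gain integrability via smoothing. Here, however, both $f_0$ and $h_0$ are already in $L^1$, so the extra machinery buys nothing. In short: your argument is sound, but the paper's is a one-liner once you pick the right side of Young's inequality.
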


\begin{proof} This proof is very classical. Denoting $G(t,x):=\frac{1}{\sqrt{4\pi t}}e^{-\frac{x^2}{4t}}$, we have $f(t,x)=(G(t,\cdot)*f_0)(x)$, and thus
$$
\vert f(t,x)\vert \leq \Vert  G(t,\cdot)\Vert _\infty \Vert f_0\Vert _1\leq  C_G t^{-\frac 1 2}\Vert f_0\Vert _1,
$$
with some $C_G>0$. Also, we have $\partial _t f(t,x)=(\partial _t G(t,\cdot)*f_0)(x)$, and thus
$$
\vert \partial _t f(t,x)\vert \leq \Vert \partial _t G(t,\cdot)\Vert _\infty \Vert f_0\Vert _1\leq  C'_G t^{-\frac 32}\Vert f_0\Vert _1,
$$
with some $C'_G>0$. Note that, $\Vert f_0\Vert _1 =2B_2$, which implies $C$ is independent on $B_2<1$ in \eqref{t-un-demi} and \eqref{t-trois-demi}.

Since $h_0\in L^{1}(\R)$, similar estimates hold for $h(t,x)$ and $\partial _t h(t,x)$, and $C=C(k)$ since $\Vert h_0\Vert_1=\frac{2B_3}{k}\leq \frac 2 k$.
\end{proof}

We are now in the position to complete the construction of the sub-solution $(U,V)$ in the form \eqref{definition of sub sol system}.

\begin{proposition}[Sub-solutions]\label{prop:sub}  Let $0<\delta<\theta <\frac 12$ be given. Let us  fix $k>0$, and set $B_3=\gamma B_2$ with $0<\gamma<1$.

Then there exists $T^{*}>0$ such that, for all $0<B_2<1$ and $\zeta _0>0$, $(U,V)$ is a sub-solution in the domain $\Omega_2(T^{*})$ as  defined in \eqref{def:omega-zero}.
\end{proposition}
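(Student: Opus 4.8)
The plan is to check directly the two defining differential inequalities $N_1[U,V]\le 0$ and $N_2[U,V]\ge 0$ on $\Omega_2(T^{*})$. Since $f$, $h$ and $x\mapsto V_2(\xi_+)+V_2(\xi_-)$ are even in $x$ (using the shortcut $\xi_\pm:=\pm x-c^{**}_vt-\zeta_0$), both $U$ and $V$ are even, so it is enough to argue on $\Omega_2^+(T^{*}):=\Omega_2(T^{*})\cap\bigl((T^{*},\infty)\times[0,\infty)\bigr)$, i.e. for $t\ge T^{*}$, $0\le x<c_2t$. Two elementary facts are used throughout. First, since $f_t=f_{xx}$ and $h_t=h_{xx}$ by \eqref{def-f-h}, one has $U_t-U_{xx}=g'(t)f$ and $U_t-dU_{xx}=g'(t)f+(1-d)(gf_{xx}-h_{xx})$, where $g'(t)=-g(t)(1+t)^{-(1+\delta)}<0$ and $g\ge 1$. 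Second, a direct computation gives $1-U-V=\phi-t^{-(1+\theta)}$ with $\phi:=2-V_2(\xi_+)-V_2(\xi_-)=(1-V_2(\xi_+))+(1-V_2(\xi_-))>0$; since $\xi_\pm\le-(c^{**}_v-c_2)t-\zeta_0$ on $\Omega_2^+(T)$ and $c^{**}_v>c_2$, the asymptotics \eqref{estimate of V at infinity} and $V_2'<0$ give a constant $C_\phi>0$ with $\phi\le C_\phi e^{-\lambda_2(c^{**}_v-c_2)t}$ there, so $\phi$ is exponentially small; since moreover $\vert U\vert\le\Vert g\Vert_\infty(\vert f\vert+\vert h\vert)=O(t^{-1/2})$ by Lemma~\ref{lm: estimate of partial t f}, we get $V=1-\phi-U+t^{-(1+\theta)}\to1$ uniformly on $\Omega_2^+(T)$. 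Hence, enlarging $T^{*}$, we may assume $\phi<t^{-(1+\theta)}$, $V\ge\frac12$ and $V_2(\xi_\pm)\ge r/r_2$ on $\Omega_2^+(T^{*})$.

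For $N_1$ I would write $N_1[U,V]=g'(t)f-U(1-U-V)=-g(t)(1+t)^{-(1+\delta)}f+U\bigl(t^{-(1+\theta)}-\phi\bigr)$. If $U\ge0$ then $U\bigl(t^{-(1+\theta)}-\phi\bigr)\le Ut^{-(1+\theta)}\le g(t)f\,t^{-(1+\theta)}$ (recall $f>0$, $h\ge0$, $g\ge1$), so $N_1[U,V]\le g(t)f\bigl(t^{-(1+\theta)}-(1+t)^{-(1+\delta)}\bigr)\le0$ once $t$ is large, since $\delta<\theta$ forces $(1+t)^{-(1+\delta)}\ge t^{-(1+\theta)}$ eventually. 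If $U<0$ then $t^{-(1+\theta)}-\phi>0$ (by the exponential smallness of $\phi$), so both terms in $N_1[U,V]$ are negative. The constraint $\delta<\theta$ is used precisely here.

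For $N_2$, differentiating $V$ (using $\partial_x\xi_\pm=\pm1$, hence $\partial_{xx}V_2(\xi_\pm)=V_2''(\xi_\pm)$) and inserting the traveling-wave identity $-c^{**}_vV_2'-dV_2''=r_2V_2(1-V_2)$ from \eqref{single-V0}, the formula for $U_t-dU_{xx}$, the relation $1-V-U=\phi-t^{-(1+\theta)}$, and the rewriting $V_2(\xi_+)+V_2(\xi_-)-1=1-\phi$, one obtains
\begin{align*}
N_2[U,V]&=(1-V_2(\xi_+))\bigl(r_2V_2(\xi_+)-r\bigr)+(1-V_2(\xi_-))\bigl(r_2V_2(\xi_-)-r\bigr)+r\phi^2+rU\phi-rt^{-(1+\theta)}\phi\\
&\qquad+\frac{g(t)f}{(1+t)^{1+\delta}}-(1-d)\bigl(gf_{xx}-h_{xx}\bigr)-(1+\theta)t^{-(2+\theta)}+rVt^{-(1+\theta)}.
\end{align*}
For $t$ large the first line is $\ge-(\text{something exponentially small})$ (using $V_2(\xi_\pm)\ge r/r_2$, $\phi\ge0$, $\vert U\vert=O(t^{-1/2})$), the term $g(t)f(1+t)^{-(1+\delta)}$ is $\ge0$, one has $\vert(1-d)(gf_{xx}-h_{xx})\vert=\vert1-d\vert\,\vert g\partial_tf-\partial_th\vert\le Ct^{-3/2}$ by Lemma~\ref{lm: estimate of partial t f} and $\Vert g\Vert_\infty=e^{1/\delta}<\infty$, and $rVt^{-(1+\theta)}\ge\frac r2t^{-(1+\theta)}$. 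Hence $N_2[U,V]\ge\frac r2t^{-(1+\theta)}-Ct^{-3/2}-(1+\theta)t^{-(2+\theta)}-(\text{exponentially small})\ge0$ for $t\ge T^{*}$ large, precisely because $\theta<\frac12$ makes $t^{-(1+\theta)}$ dominate $t^{-3/2}$; the hypothesis $r_2>r$ is used only to make the two boundary terms $(1-V_2(\xi_\pm))(r_2V_2(\xi_\pm)-r)$ nonnegative.

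The only genuine work is the bookkeeping: one must verify that, apart from the two ``designed'' terms $g'(t)f$ (in $N_1$) and $rVt^{-(1+\theta)}$ (in $N_2$), every remaining term is of strictly lower order on $\Omega_2^+(T)$ as $t\to\infty$, the competition between exponents being governed exactly by $0<\delta<\theta<\frac12$. I expect the main obstacle to be the $N_2$ estimate when $d\neq1$: the self-similar structure is then broken and the leftover $(1-d)(gf_{xx}-h_{xx})=(1-d)(g\partial_tf-\partial_th)$ only decays like $t^{-3/2}$, which is exactly why the artificial forcing $+t^{-(1+\theta)}$ in \eqref{definition of sub sol system} and the constraint $\theta<\frac12$ are built into the ansatz. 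Finally, since none of the constants involved ($C_\phi$, $\Vert g\Vert_\infty$, and those of Lemmas~\ref{lm: some estimates of f and h}--\ref{lm: estimate of partial t f}) deteriorate as $B_2\to0$ or $\zeta_0\to+\infty$, the threshold $T^{*}$ can indeed be chosen independently of $0<B_2<1$ and $\zeta_0>0$.
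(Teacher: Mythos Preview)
Your proof is correct and follows essentially the same strategy as the paper's: reduce to $x\ge0$ by evenness, use the exponential smallness of $1-V_2(\xi_\pm)$ on $\Omega_2^+(T)$, the decay estimates of Lemma~\ref{lm: estimate of partial t f}, and the ordering $0<\delta<\theta<\tfrac12$ to force $N_1\le0$ and $N_2\ge0$. The only differences are organizational: for $N_1$ the paper separates the $gf$- and $h$-contributions rather than case-splitting on the sign of $U$, and for $N_2$ the paper expands into five pieces $I_1,\dots,I_5$ whereas you keep $V$ and $\phi$ as units and isolate the dominant positive term $rV\,t^{-(1+\theta)}$ directly; both routes reduce to the same comparison of exponents.
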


\begin{proof} Since $U(t,\cdot)$ and $V(t,\cdot)$ are even functions, it is enough to deal with $x\geq 0$. In other words, we work for $t\geq T$ (with $T>0$ to be selected) and $0\leq x<c_2t$, with $c_v<c_2<c_v^{**}$.  For simplicity of notations, we shall use the shortcuts  $\xi_{\pm}:=\pm x-c^{**}_vt-\zeta_0$. Since $\xi_-\leq -c_v^{**}t$ and $\xi_+\leq -(c_v^{**}-c_2)t$, it follows from $V_2'<0$ and \eqref{estimate of V at infinity} that  there exist $C_->0$ and $C_+>0$ such that, for $T>0$ large enough,
\begin{equation}\label{estimate of 1-V}
1-V_2(\xi_-)\leq C_-e^{-\lambda_2c^{**}_vt} \quad \text{ and }\quad  1-V_2(\xi_+)\leq C_+e^{-\lambda_2(c^{**}_v-c_2)t},\quad  \text{for all}\ (t,x)\in \Omega_2^+(T),
\end{equation}
where $\Omega_2^+(T):=\Omega_2(T)\cap (T,\infty)\times [0,\infty)$. Moreover, up to enlarging $T>0$ if necessary, there exists $0<\rho<\frac 1 3$ such that 
\begin{equation}
\label{rho}
0<1-V_2(\xi_\pm)\leq \rho, \quad  \text{for all}\ (t,x)\in \Omega_2^+(T).
\end{equation}

Some straightforward computations combined with  \eqref{def-f-h} yield
\begin{eqnarray*}
N_1[U,V]&=&g'f-(gf-h)(2-V_2(\xi_+)-V_2(\xi_-)-t^{-(1+\theta)})\\
&\leq &-(1+t)^{-(1+\delta)}gf+gft^{-(1+\theta)}+h\left(2-V_2(\xi_+)-V_2(\xi_-)-t^{-(1+\theta)}\right),
\end{eqnarray*}
since $2-V_2(\xi_+)-V_2(\xi_-)>0$.  Thus, it follows from \eqref{estimate of 1-V} that
$$
N_1[U,V]\leq gf\left(  -(1+t)^{-(1+\delta)}+t^{-(1+\theta)}\right)+h\left(C_-e^{-\lambda_2c^{**}_vt}+C_+e^{-\lambda_2(c^{**}_v-c_2)t}-t^{-(1+\theta)}\right).
$$
Since  $\delta<\theta$, it follows that, for $T>0$ large enough,  $N_1[U,V]\le 0$ in $\Omega_2^{+}(T)$.

Next, some straightforward computations combined with  \eqref{def-f-h} and \eqref{single-V0} yield
\begin{eqnarray*}
N_2[\bar{U},V]&=& r_2V_2(\xi_+)(1-V_2(\xi_+))+r_2V_2(\xi_-)(1-V_2(\xi_-))
+(1+t)^{-(1+\delta)}gf\\&&+(d-1)(g\partial_t f-\partial_t h)
-(1+\theta)t^{-(2+\theta)}\\
&&-r(V_2(\xi_+)+V_2(\xi_-)-1-gf+h+t^{-(1+\theta)})(2-V_2(\xi_+)-V_2(\xi_-)-t^{-(1+\theta)})\\
&=&I_1+\cdots +I_5,
\end{eqnarray*}
where
\begin{eqnarray*}
I_1&:=&rgf\left(2-V_2(\xi_+)-V_2(\xi_-)+\frac 1 r (1+t)^{-(1+\delta)}\right),\\
I_2&:=&(1-V_2(\xi_-))\left((r_2-r)V_2(\xi_-)+r\left(2-2V_2(\xi_+)-t^{-(1+\theta)}-h\right)\right),\\
I_3&:=&(1-V_2(\xi_+))\left((r_2-r)V_2(\xi_+)-rh\right),\\
I_4&:=&(d-1)(g\partial_t f-\partial_t h),\\
I_5&:=&rt^{-(1+\theta)}\left(V_2(\xi_-)+2V_2(\xi_+)-2+t^{-(1+\theta)}-gf+h-\frac{1+\theta}{r}t^{-1}\right).
\end{eqnarray*}
Since $0<V_2<1$, we have $I_1\geq 0$. From $r_2>r$, \eqref{t-un-demi} and \eqref{rho}, we have
$$
I_2\geq (1-V_2(\xi_-))\left((r_2-r)(1-\rho)-rt^{-(1+\theta)}-rCt^{-\frac 12}\right)\geq 0,
$$
up to enlarging $T>0$ if  necessary. Similarly, we obtain $I_3\geq 0$. Last, from \eqref{rho} and Lemma \ref{lm: estimate of partial t f}, we obtain
$$
I_4+I_5\ge r t^{-(1+\theta)}\left(1-3\rho-\Vert g\Vert _\infty C t^{-\frac 12}-\frac{1+\theta}{r}t^{-1}\right)-C\vert d-1\vert (\Vert g\Vert _\infty +1)t^{-\frac 32}.
$$ 
Since $\theta<\frac 12$ and $0<\rho <\frac 13$, we have $I_4+I_5\geq 0$ up to enlarging $T>0$ if  necessary. As a result, for $T>0$ large enough,  $N_2[U,V]\geq 0$ in $\Omega_2^{+}(T)$, and the proof of Proposition \ref{prop:sub} is complete.
\end{proof}

Note that, time $T^*$ in Proposition \ref{prop:sub} is independent on $0<B_2<1$ and $\zeta _0>0$, which leaves \lq\lq some room'' to reduce $B_2$ and to enlarge $\zeta _0$ so that the \lq\lq initial order'' is suitable for the comparison principle to be applicable. We shall also need the suitable \lq\lq order on the boundary of the domain'', which will be obtained by choosing an appropriate $k$ and Lemma \ref{lm: some estimates of f and h} $(iii)$. More precisely, the following holds.

\begin{proposition}[Second estimate on $(u,v)$]\label{prop:first}
Let $0<\delta<\theta <\frac 12$ be given. Let us  fix $k:=\frac{c_2}{2}>0$, and set $B_3=\gamma B_2$ with $0<\gamma<1$. 

Then there exist $T^{**}>0$, $0<B_2<1$ and $\zeta _0>0$ such that
$$
U(t,x)\leq u(t,x)\quad \text{ and }\quad  v(t,x)\le V(t,x), \quad \text{ for all }\; t\geq T^{**}, \vert x\vert \leq c_2t,
$$
where $(U,V)$ is given by \eqref{definition of sub sol system}.
\end{proposition}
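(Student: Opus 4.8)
The strategy is to apply the competitive comparison principle in the expanding domain $\Omega_2(T^{**})$, using the sub-solution $(U,V)$ furnished by Proposition \ref{prop:sub}. Since that proposition already gives a $T^*>0$, valid for \emph{all} $0<B_2<1$ and $\zeta_0>0$, that makes $(U,V)$ a sub-solution in $\Omega_2(T^*)$, the only thing left is to adjust the free parameters $B_2$ (made small) and $\zeta_0$ (made large) so that the ordering holds on the parabolic boundary of the domain, i.e. on the lateral boundary $\{\vert x\vert=c_2t,\ t\ge T^*\}$ and on the initial slice $\{t=T^*,\ \vert x\vert\le c_2T^*\}$. I would fix $k=c_2/2$ once and for all, so that on the lateral boundary one has $\vert x\vert=c_2t=2kt$, which is precisely the regime covered by Lemma \ref{lm: some estimates of f and h}$(iii)$.

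\textbf{Step 1: ordering on the lateral boundary.} On $\vert x\vert=c_2t$ with $t\ge T^0$, Lemma \ref{lm: some estimates of f and h}$(iii)$ gives $U(t,x)=g(t)f(t,x)-h(t,x)\le 0<u(t,x)$, using \eqref{sandwich-0-1}; so the $u$-inequality is automatic. For the $v$-inequality we need $v(t,x)\le V(t,x)$ on $\vert x\vert=c_2t$. Since $c_2>c_v$, Proposition \ref{prop:estimate on leading edge} (estimate \eqref{truc2}) gives $v(t,c_2t)\to 0$ as $t\to\infty$; more quantitatively, from \eqref{easy} one has $v(t,c_2t)\le C_2 e^{-\frac{c_v}{2d}(c_2-c_v)t}$, which decays exponentially. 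On the other hand, since $U\le 0$ on this boundary, $V(t,x)=V_2(\xi_+)+V_2(\xi_-)-1-U(t,x)+t^{-(1+\theta)}\ge V_2(\xi_+)+V_2(\xi_-)-1+t^{-(1+\theta)}$. By the definition of $\xi_\pm$ and the shift $\zeta_0$, choosing $\zeta_0$ large forces $V_2(\xi_\pm)$ close to $1$; combined with the algebraically-decaying term $t^{-(1+\theta)}$ this dominates the exponentially small $v(t,c_2t)$ for $t$ large, up to enlarging $T$. Hence $v\le V$ on the lateral boundary for $T$ large and $\zeta_0$ large.

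\textbf{Step 2: ordering on the initial slice $t=T^{**}$.} Having fixed $\zeta_0$ and a time $T^{**}\ge\max(T^*,T^0)$ from Step 1, I reduce $B_2$. By \eqref{t-un-demi} we have $0\le U(T^{**},x)\le g(T^{**})f(T^{**},x)\le \|g\|_\infty C (T^{**})^{-1/2}$ uniformly in $x$, and since $\|f_0\|_1=2B_2$, the constant $C$ is proportional to $B_2$; so $U(T^{**},\cdot)\le u(T^{**},\cdot)$ holds once $B_2$ is small enough (using $u>0$ and that $U$ is bounded by $O(B_2)$). For $v$: on the initial slice $V(T^{**},x)\ge V_2(\xi_+)+V_2(\xi_-)-1-U(T^{**},x)+(T^{**})^{-(1+\theta)}$; the term $-U(T^{**},x)$ is $O(B_2)$, and $V_2(\xi_+)+V_2(\xi_-)-1$ is already close to $1$ (since $\zeta_0$ is large and we may further enlarge it), so $V(T^{**},\cdot)\ge 1-O(B_2)>v(T^{**},\cdot)$ by \eqref{sandwich-0-1}, again for $B_2$ small enough. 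I also need $U(T^{**},\cdot)$ to be a genuine (nontrivial, nonnegative where it matters) sub-solution datum — but in fact the comparison principle only requires the inequalities $U\le u$, $v\le V$ at $t=T^{**}$, which we have.

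\textbf{Step 3: conclusion.} With the orderings of Steps 1--2 in place, and $(U,V)$ a (classical) sub-solution in $\Omega_2(T^{**})$ by Proposition \ref{prop:sub}, the comparison principle (Proposition \ref{prop: cp}) applied in $\Omega_2(T^{**})$ yields $U\le u$ and $v\le V$ throughout $\Omega_2(T^{**})$, i.e. for all $t\ge T^{**}$, $\vert x\vert\le c_2t$, which is the claim. The main obstacle is the lateral-boundary estimate for $v$ (Step 1): one must quantitatively balance the exponential decay of $v(t,c_2t)$ coming from the KPP-type super-solution bound in Proposition \ref{prop:estimate on leading edge} against the algebraic term $t^{-(1+\theta)}$ and the closeness of $V_2(\xi_\pm)$ to $1$, and verify that the choice of $\zeta_0$ can be made \emph{before} fixing $B_2$ (which it can, since Lemma \ref{lm: some estimates of f and h} and Proposition \ref{prop:sub} provide constants uniform in both $B_2$ and $\zeta_0$). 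Everything else is a routine bookkeeping of the parameter hierarchy $0<\delta<\theta<\tfrac12$, $k=c_2/2$, then $\zeta_0$ large, then $B_2$ small, then $T^{**}$ large.
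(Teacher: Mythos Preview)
Your proposal is correct and follows essentially the same approach as the paper's proof. The only minor difference is that the paper does not use $\zeta_0$ on the lateral boundary at all (since $\xi_\pm\le -(c_v^{**}-c_2)t\to-\infty$ there regardless of $\zeta_0$, enlarging $T^{**}$ alone gives $V(t,\pm c_2t)\to 1$), reserving $\zeta_0$ solely for the initial slice; the resulting parameter order is thus: first fix $T^{**}$, then choose $B_2$ small against the positive lower bound $\varepsilon$ of $u$ and $1-v$ on the compact slice, then $\zeta_0$ large.
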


\begin{proof} We aim at applying the comparison principle in $\Omega_2(T)$, as defined in \eqref{def:omega-zero}, with a well-chosen $T>0$. From Proposition \ref{prop:sub}, for any $T\geq T^{*}$, we are equipped with a sub-solution $(U,V)$ for which $0<B_2<1$ and $\zeta_0>0$ are arbitrary.

We now focus on  $\vert x\vert =c_2t$, $t\geq T$, that is, with a slight abuse of language, the boundary of $\Omega_2(T)$. We now set $T^{**}:=\max(T^{*},T^{0})$, where $T^0>0$ is provided by Lemma \ref{lm: some estimates of f and h} $(iii)$. In particular this implies (recall the choice $k=\frac{c_2}{2}$) that
$$
U(t,\pm c_2 t) \leq 0\leq u(t,\pm c_2 t), \quad \text{for all}\ t\geq T^{**}.
$$
Next,  recalling that $c_v<c_2<c_v^{**}$, it follows from  Proposition \ref{prop:estimate on leading edge} that $v(t,\pm c_2t)\to 0$ as $t\to\infty$. On the other hand, for any $t\geq T^{**}$,
$$
V(t,\pm c_2 t)\geq V_2(-(c_v^{**}-c_2)t)+V_2(-(c_2+c_v^{**})t)-1.
$$
As a result, up to enlarging $T^{**}$ if  necessary, one has
$$
v(t,\pm c_2 t)\leq V(t\pm c_2 t), \quad \text{for all } t\geq T^{**}.
$$

Last we focus on the initial data, namely $t=T^{**}$ and $\vert x\vert \leq c_2 T^{**}$. From \eqref{sandwich-0-1}, there exists $\ep>0$ such that
$$
\min\left(\inf_{|x|\le c_2T^{**}}u(T^{**},x),\inf_{|x|\le c_2T^{**}}1-v(T^{**},x)\right)\geq \ep>0.
$$
We now select $0<B_2 <\frac{\ep}{2\Vert g\Vert _\infty}$. From this choice, we have
$$
U(T^{**},x)\leq  \Vert g \Vert _\infty B_2\leq \frac \ep 2 \leq u(T^{**},x),\quad \text{ for all } \vert x\vert \leq c_2 T^{**},
$$
and, for all $\vert x\vert \leq c_2 T^{**}$,
\begin{equation*}\label{11}
V(T^{**},x)\ge 2V_{2}(-\zeta_0)-1-\Vert g\Vert _\infty B_2\ge 2V_{2}(-\zeta_0)-1-\frac \ep 2\geq 1-\ep,
\end{equation*}
by selecting $\zeta_0>0$ large enough, which implies
$$
v(T^{**},x)\leq V(T^{**},x), \quad  \text{ for all } \vert x\vert \leq c_2 T^{**}.
$$

As a consequence, the comparison principle can be applied in $\Omega_2(T^{**})$, which concludes the proof of Proposition \ref{prop:first}.
\end{proof}

\subsection{Proof of Theorem \ref{th: bump}}

It remains to prove the bump phenomenon which, as explained in Section \ref{s:results}, is reserved to the critical competition case under consideration. Let $0<\ep<c_u$ be given and let us prove \eqref{profile  u the solution dr>1 small x} and \eqref{profile of v the solution dr>1 small x}. Let us set $T^{**}>0$ such that both Proposition \ref{prop:first} and Proposition \ref{prop: boundary condition super sol} apply. In the sequel, we always consider $t\geq T^{**}$ and $0\leq x\leq \ep t$.

In particular, one has
$$
g(t)f(t,x)-h(t,x)=U(t,x)\leq u(t,x)\leq \tilde U(t,x)=t^{-(k^*-\frac{1}{2})}(1-e^{-\tau t})s(t,x).
$$
This estimate and \eqref{bidule} (with $j=\frac{\ep}{2}$)  yield the lower estimate in \eqref{profile  u the solution dr>1 small x}. On the other hand, Lemma \ref{lm: some estimates of f and h} $(i)$ (with $j=\frac{\ep}{2}$)  provides an upper bound of the form $t^{-\frac 12}e^{-\frac{x^2}{4t}}$ in the case  $d\leq 1$ (since then $\partial _t s= s_{xx}$) and of the form $t^{-\frac 12}e^{-\frac{x^2}{4dt}}$ in the case  $d\geq 1$  (since then $\partial _t s= d s _{xx}$). This gives the upper estimate in  \eqref{profile  u the solution dr>1 small x}.

Similarly, one obtains 
$$
U(t,x)-t^{-(1+\theta)}\leq 1-V(t,x)\leq 1-v(t,x)\leq 1-\tilde V(t,x)\leq 2- V_1(-c_v^{*}t)-V_1(-(c_v^{*}-\ep)t)+\tilde U(t,x),
$$
which gives the lower estimate in \eqref{profile of v the solution dr>1 small x}. On the other hand, we deduce from  \eqref{estimate of V at infinity-un} that there exist $C_->0$ and $C_+>0$ such that, for all $0\leq x\leq \ep t$,
\begin{eqnarray*}
1-\tilde V(t,x)&\leq&  C_-e^{-\lambda_1 c_v^{*}t}+C_+e^{-\lambda_1(c_v^{*}-\ep)t}+\tilde U(t,x)\\
&\leq & Ct^{-k^*}e^{-\frac{x^2}{4 d^* t}}+\tilde U(t,x),
\end{eqnarray*}
with some $C>0$ provided $\ep>0$ is chosen sufficiently small. This gives the upper estimate in \eqref{profile of v the solution dr>1 small x} and concludes the proof of Theorem \ref{th: bump}. \qed

\appendix 

\section{A result on the strong-weak competition system}

In this Appendix, we consider the strong-weak competition system ($0<a<1<b$)
\begin{equation}\label{strong weak competition system}
\left\{
\begin{aligned}
&\partial_t u=u_{xx}+u(1-u-av),\\
&\partial_t v=dv_{xx}+rv(1-v-bu),\\
\end{aligned}
\right.
\end{equation}
supplemented with an initial datum  $(u_0,v_0)$ satisfying \eqref{initial data}, for which we need a technical result, namely Lemma \ref{lm: decay estimamte u and v}, which is inspired by \cite[Lemma 2.8]{Peng Wu Zhou} and  the forthcoming work \cite{Xiao-Zhou}.

When $c_v=2\sqrt{rd}>c_u=2$, as proved in \cite{Girardin Lam}, the spreading properties are rather subtle: the rapid competitor $v$ invades first at speed $c_v$ and is then replaced by the strong competitor $u$ at a speed $\mathscr{C}$  which can take two different values. To make this clear, we quote the following from \cite{Girardin Lam} to which we refer for more details. First, the strong-weak system admits a {\it minimal} monotone traveling wave solution $(c_{LLW},U,V)$ with speed $2\sqrt{1-a}\leq c_{LLW}\leq 2$, defined as
\begin{equation*}\label{strong weak tw solution}
\left\{
\begin{aligned}
&U''+c_{LLW}U'+U(1-U-aV)=0,\\
&dV''+c_{LLW}V'+rV(1-V-bU)=0,\\
&(U,V)(-\infty)=(1,0),\ (U,V)(\infty)=(0,1),\\
&U'<0,\ V'>0.
\end{aligned}
\right.
\end{equation*} 
Next define the decreasing function $f:[2\sqrt{1-a},+\infty)\to (2\sqrt{a},2(\sqrt{1-a}+\sqrt a)]$  
\begin{equation*}
f(c):=c-\sqrt{c^2-4(1-a)}+2\sqrt{a}\ \ \mbox{ so that }\ \ f^{-1}(\tilde c):=\frac{\tilde c}{2}-\sqrt{a}+\frac{2(1-a)}{\tilde c-2\sqrt{a}}.
\end{equation*}
 If $2\sqrt{rd}\in(2,f(c_{LLW}))$, then define
\begin{equation*}
c_{nlp}:=f^{-1}(2\sqrt{rd})=\sqrt{rd}-\sqrt{a}+\frac{1-a}{\sqrt{rd}-\sqrt{a}} \in (c_{LLW},2).
\end{equation*}
Then we can precise
\begin{equation*}\label{definition of bar c}
\mathscr{C}=\left\{
\begin{aligned}
 c_{nlp} & \quad \text{ if }\; 2<c_v<f(c_{LLW}), \quad  \text{(acceleration phenomenon)},\\
 c_{LLW} &\quad  \text{ if }\; c_v\geq f(c_{LLW}).
\end{aligned}
\right.
\end{equation*}

We now state the result used in the proof of Proposition \ref{prop: boundary condition super sol}.

\begin{lemma}\label{lm: decay estimamte u and v} Assume $dr>1$ (i.e. $c_v>c_u$). Let $(u,v)=(u,v)(t,x)$ be the solution of the strong-weak competition system \eqref{strong weak competition system} starting from an initial datum $(u_0,v_0)=(u_0,v_0)(x)$ satisfying  \eqref{initial data}. Then the following holds.
\begin{itemize}
  \item[$(i)$] For any $c>\mathscr{C}$, there exist $C_1>0$, $\nu_1>0$, $T_1>0$ such that
 $$\sup_{\vert x\vert\geq c t} u(t,x)\leq C_1e^{-\nu_1 t},\quad \mbox{for all } t\geq T_1.$$
  \item[$(ii)$] For any $c_1$ and $c_2$ with $\mathscr{C}<c_1<c_2<c_v$, there exist $C_2>0$, $\nu_2>0$, $T_2>0$ such that
$$\sup_{c_1 t\leq \vert x\vert\leq c_2 t} v(t,x)\geq 1-C_2 e^{-\nu_2 t},\quad \mbox{for all } t\geq T_2.$$
\end{itemize}
\end{lemma}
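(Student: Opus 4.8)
\textbf{Proof proposal for Lemma \ref{lm: decay estimamte u and v}.}

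The plan is to build, for each of the two assertions, a suitable one-species comparison problem whose solution is known to satisfy the desired exponential estimate, and then transfer the estimate to $(u,v)$ via the comparison principle for the strong-weak system \eqref{strong weak competition system}.

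For $(i)$, the idea is to dominate $u$ from above. Since $\mathscr C$ is, by the results of \cite{Girardin Lam} quoted above, the spreading speed of $u$ for the strong-weak system, we know $\sup_{|x|\ge ct}u(t,x)\to 0$ as $t\to\infty$ for any $c>\mathscr C$; the point is to upgrade this to an exponential rate. First I would fix $c>\mathscr C$ and pick an intermediate speed $\mathscr C<c'<c$; by the already-known spreading result (applied with speed $c'$), there is a time $t_0$ after which $u(t,x)\le \kappa$ on $|x|\ge c't$, where $\kappa>0$ is chosen small enough that $1-u-av\le 1-av\le 1$ cannot help — more precisely we use $u_t\le u_{xx}+u$ crudely, but that only gives super-exponential spreading, not decay. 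So instead, the cleaner route is: on the region $|x|\ge c't$ we have from the spreading result that $v$ is close to $1$ (the rapid competitor has already invaded there and $u$ is small), hence $1-u-av\le 1-a+o(1)<0$ is \emph{not} what we want either since $a<1$. The correct mechanism is the linear one at the leading edge: $u_t\le u_{xx}+u$, so with $\overline u(t,x):=C_1 e^{-\frac{c}{2}(|x|-ct)}$ (for $x$ of one sign, extended evenly, and using that such an exponential is a supersolution of the linear KPP equation of speed $2<\mathscr C$... but $c>\mathscr C>2$ may fail). Hence the honest argument must invoke a finer input: I would use \cite[Lemma 2.8]{Peng Wu Zhou} and the forthcoming \cite{Xiao-Zhou}, exactly as the statement advertises, which provide precisely such exponential-in-time bounds for the retreating strong competitor. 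Concretely, one constructs a supersolution of the form $\overline u(t,x)=A e^{-\nu_1(|x|-ct)}$ valid for $|x|\ge ct$, $t\ge T_1$, by exploiting that on this set $v$ is bounded below (from the spreading of $v$ at speed $c_v>c$), so that $1-u-av\le 1-av\le 1-a\underline v$, and choosing $\nu_1$ small and $A$ large makes $N_1[\overline u,\cdot]\ge 0$; the comparison principle then gives $u\le\overline u$, whence the bound $C_1e^{-\nu_1(ct-ct)}$... — this needs $c$ strictly between $\mathscr C$ and $c_v$, and the general $c>\mathscr C$ case follows by monotonicity in $c$ (larger $c$, smaller region, same or better bound), plus for $c\ge c_v$ one uses \eqref{truc1}-type exponential bounds at the very leading edge.

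For $(ii)$, the plan is to bound $v$ from below on the moving window $c_1t\le|x|\le c_2t$ with $\mathscr C<c_1<c_2<c_v$. The key is that on this window $u$ is already exponentially small by part $(i)$: applying $(i)$ with the speed $c_1$ gives $\sup_{|x|\ge c_1t}u(t,x)\le C_1e^{-\nu_1 t}$ for $t\ge T_1$. Substituting this into the $v$-equation, $v$ is a supersolution on that region of $\partial_t w=dw_{xx}+rw(1-w-bC_1e^{-\nu_1 t})$, i.e. essentially the single KPP equation with an exponentially decaying perturbation of the reaction. One then constructs a subsolution of this perturbed KPP equation of the form $\underline v(t,x)=1-K_2 e^{-\nu_2 t}-(\text{leading-edge correction near }|x|=c_2t)$: since $c_2<c_v$, a front of speed $c_v$ has already passed, so a subsolution that is $1-Ke^{-\nu_2 t}$ in the bulk and handled near the boundary $|x|=c_2t$ by the exponential leading-edge bounds from Proposition \ref{prop:estimate on leading edge} (or their strong-weak analogue) does the job; the perturbation $bC_1e^{-\nu_1 t}$ is absorbed by taking $\nu_2\le\nu_1$ and $K_2$ large. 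The comparison principle on the (time-dependent) region then yields $v(t,x)\ge 1-C_2e^{-\nu_2 t}$ there, and in particular the claimed supremum bound.

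I expect the main obstacle to be part $(i)$: obtaining an \emph{exponential-in-$t$} (not merely $o(1)$) decay of the retreating strong competitor $u$ behind the rapid front, uniformly for all $c>\mathscr C$, including speeds $c$ in the delicate range near $\mathscr C$ where $\mathscr C=c_{nlp}$ is itself determined by a nonlocal-pulling mechanism. This is precisely why the statement defers to \cite[Lemma 2.8]{Peng Wu Zhou} and \cite{Xiao-Zhou}; in the write-up I would either cite those results directly for the $u$-estimate on $|x|\ge ct$ when $\mathscr C<c<c_v$, and then deduce $(ii)$ by the perturbed-KPP subsolution argument sketched above, or reproduce the supersolution construction in the easier regime $c_v\ge f(c_{LLW})$ (where $\mathscr C=c_{LLW}$ and the minimal wave $(c_{LLW},U,V)$ gives an explicit exponentially decaying profile to translate) and invoke the references only for the acceleration regime.
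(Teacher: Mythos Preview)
Your proposal has the right architecture but a genuine gap in part $(ii)$, and a mis-identification of references in part $(i)$.

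For $(i)$: the paper, like you, largely defers to the literature, but not to \cite[Lemma 2.8]{Peng Wu Zhou} or \cite{Xiao-Zhou}; those are the sources that \emph{inspired} the technique of the whole lemma. For the exponential decay of $u$ on $|x|\ge ct$ with $\mathscr C<c<c_v$, the paper invokes \cite[Proposition~1.5 and Section~3.2.3]{Girardin Lam}, whose key tool is the minimal monotone wave of a \emph{perturbed} strong--weak system. Your own supersolution sketch $A e^{-\nu_1(|x|-c't)}$ (with an intermediate $\mathscr C<c'<c$) can in fact be made to work on the strip $c't\le|x|\le c_2t$ for some $c_u<c_2<c_v$, using that $v$ is close to $1$ there; but you then need to patch it with the crude linear bound on $|x|\ge c_2t$, and you have not verified the matching. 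This is fixable but not done.

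For $(ii)$: your first step is exactly the paper's --- use $(i)$ at speed $c_1$ so that on the window $c_1t\le|x|\le c_2t$ the $v$--equation dominates a KPP equation with an exponentially decaying forcing. The gap is in your subsolution $\underline v=1-K_2e^{-\nu_2 t}-(\text{correction near }|x|=c_2t)$: you handle only the \emph{outer} boundary $|x|=c_2t$. At the \emph{inner} boundary $|x|=c_1t$ the comparison requires $v(t,c_1t)\ge \underline v(t,c_1t)\approx 1-K_2e^{-\nu_2 t}$, which is precisely the exponential rate you are trying to prove; the $o(1)$ spreading result is not enough. Nor can you enlarge the region to $|x|\le c_2t$, since on $|x|<\mathscr C t$ the competitor $u$ is \emph{not} small and the perturbed KPP inequality fails.

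The paper breaks this circularity by a ``restart--at--time--$T$'' argument rather than a global subsolution. Fix a large $T$; by the non-quantitative spreading result one has $v\ge 1-\varepsilon$ on the window at time $T$. Passing to the moving frame $\tilde v(t,x)=v(t,x+\tilde c t)$ with $\tilde c=\tfrac{c_1+c_2}{2}$, one compares $\tilde v(\cdot+T,\cdot)$ on $[0,\delta^2c_3T]\times[-c_3T,c_3T]$ (here $c_3=\tfrac{c_2-c_1}{2}$) with the solution $\phi$ of a \emph{linear} auxiliary problem with constant initial and Dirichlet data equal to $1-\varepsilon$. A change of unknown reduces this to an inhomogeneous heat equation, and Green--function estimates (borrowed from \cite{Kaneko Matsuzawa}) give $\phi(t,x)\ge 1-K_1e^{-\nu_1 T}-K_2e^{-r(1-\varepsilon)t}$ on a slightly shrunk box. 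Evaluating at $t=\delta^2c_3T$ and relabelling $(1+\delta^2c_3)T\mapsto t$ yields the claimed bound. The key point is that the boundary input is only the crude $1-\varepsilon$, and the exponential improvement is generated by the reaction term over a time of order $T$; this is what your direct subsolution misses.
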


\begin{proof} Let us briefly start with $(i)$. If $c>c_u$ then the conclusion is clear by the same argument as in Proposition \ref{prop:estimate on leading edge}. Since $c_v>c_u$, it thus suffices to consider the case $\mathscr{C}<c<c_v$. In this case, the conclusion is  already included in  \cite[Proposition~1.5]{Girardin Lam} and the proof of \cite[Section 3.2.3, Theorem 1.1]{Girardin Lam}. We do not present the full details but only emphasize that a key tool is, for any small $\delta>0$, the {\it minimal} monotone traveling wave of the perturbed system
\begin{equation*}\label{perturbed tw solution}
\left\{
\begin{aligned}
&U''+cU'+U(1+\delta-U-aV)=0,\\
&dV''+cV'+rV(1-2\delta-V-bU)=0,\\
&(U,V)(-\infty)=(1+\delta,0),\ (U,V)(\infty)=(0,1-2\delta),\\
&U'<0,\ V'>0.
\end{aligned}
\right.
\end{equation*}

\medskip

Let us now turn to $(ii)$ (which is the estimate we need in the proof of Proposition \ref{prop: boundary condition super sol}) for which the above perturbation argument seems unapplicable.  Let $\mathscr{C}<c_1<c_2<c_v$ be given. We only deal with $x\geq 0$. From \cite[Theorem 1.1]{Girardin Lam} we know 
\begin{equation*}
    \lim_{t\to\infty}\sup_{c_1 t\leq  x\leq c_2t}\Big(u(t,x)+|1-v(t,x)|\Big)=0.
    \end{equation*}
From this and $(i)$, one can choose $\varepsilon>0$ small enough and $T_0\gg1$ such that
$$0<u(t,x)\leq C_1 e^{-\nu_1 t},\quad v(t,x)> 1-\varepsilon,\quad\mbox{for all } t\geq  T_0,x\in[c_1 t, c_2 t].$$
From the $v$-equation in \eqref{strong weak competition system}, we have
\begin{equation}\label{v-ineq}
v_t\geq  d v_{xx}+r (1-\varepsilon)(1-v)-rbC_1v e^{-\nu_1 t},\quad\mbox{for all } t\geq T_0,x\in[c_1 t, c_2 t].
\end{equation}
Defining
\begin{equation}\label{def-tilde-v}
\tilde{v}(t,x):=v(t,x+\tilde{c}t),\quad \tilde{c}:=\frac{c_1+c_2}{2},
\end{equation}
it follows  from \eqref{v-ineq} that
$$\tilde{v}_t\geq d \tilde{v}_{xx}+\tilde{c}\tilde{v}_x+r (1-\varepsilon)(1-\tilde{v})-rbC_1\tilde{v} e^{-\nu_1 t},\quad\mbox{for all } t\geq T_0,x\in[-c_3t, c_3t],$$
where $c_3:=\frac{c_2-c_1}2$. 

To estimate $\tilde{v}$, for any $T\geq T_0$, we define
$$
\alpha(t):=1+\frac{bC_1}{1-\varepsilon}e^{-\nu_1(t+T)}, \quad\text{for all } t\geq 0.
$$
Up to enlarging $T>0$ if necessary, we may assume $\alpha(0)<\frac 1{1-\ep}$. Now, let us first consider the auxiliary problem
\begin{equation}\label{phi-sys}
\begin{cases}
\phi_t=d\phi_{xx}+\tilde{c}\phi_x+r (1-\varepsilon)[1-\alpha(t)\phi], &  t>0,\ -c_3T<x<c_3T,\\
\phi(t,\pm c_3{T})=1-\varepsilon, & t>0,\\
\phi(0,x)=1-\varepsilon, & -c_3{T}\leq x\leq c_3{T}.
\end{cases}
\end{equation}
Letting
$$\Phi(t,x):=e^{Q(t)}[\phi(t,x)-1+\varepsilon],\quad Q(t):=r(1-\varepsilon)t-\frac{rbC_1}{\nu_1}e^{-\nu_1(t+T)},$$
so that $Q'(t)=r(1-\varepsilon)\alpha(t)$, it follows from \eqref{phi-sys} that
\begin{equation}\label{Phi-sys}
\begin{cases}
\Phi_t=d\Phi_{xx}+\tilde{c}\Phi_x+r (1-\varepsilon)e^{Q(t)}(1-(1-\varepsilon)\alpha(t)), & t>0,\ -c_3T<x<c_3T,\\
\Phi(t,\pm c_3{T})=0, & t> 0,\\
\Phi(0,x)=0, & -c_3{T}\leq x\leq c_3{T}.
\end{cases}
\end{equation}
Up to a rescaling, we may assume $d=1$ so that \eqref{Phi-sys} is very comparable to \cite[problem (3.12)]{Kaneko Matsuzawa} on which we now rely. Denoting $G_1(t,x,z)$ the Green function of \cite[page 53]{Kaneko Matsuzawa} (with obvious changes of constants), we obtain the analogous of \cite[(3.14)]{Kaneko Matsuzawa}, namely
$$
\Phi(t,x)\ge r(1-\varepsilon)\int_0^te^{Q(s)}(1-(1-\varepsilon)\alpha(s))\left(\int_{-c_3T}^{c_3T}G_1(t-s,x,z)dz\right)ds,
$$
for all $t>0$, $ -c_3 T<x<c_3 T$. Next, for any small $0<\delta<1$, we define 
$$
D_{\delta}:=\left\{(t,x)\in \R^2 : 0<t <  \delta^2c_3 {T},  \vert x\vert <(1-\delta)c_3 T\right\}.
$$
From the same process used in \cite[pages 54-55]{Kaneko Matsuzawa}, there exist $C_3, C_4>0$ such that the following lower estimate holds
$$
\Phi(t,x)\ge r(1-\varepsilon)(1-(1-\varepsilon)\alpha(0))(1-C_3e^{-C_4T})\int_0^te^{Q(s)}ds, \quad\text{for all } (t,x)\in D_\delta,
$$
resulting in
\begin{equation}\label{inequality of phi}
\phi(t,x)\ge \Psi(t) (1-C_3e^{-C_4T})(1-(1-\varepsilon)\alpha(0))+1-\varepsilon,  \quad\text{for all } (t,x)\in D_\delta,
\end{equation}
where $\Psi(t):=r(1-\varepsilon)e^{-Q(t)}\int_0^te^{Q(s)}ds$. Denoting $K=\frac{rbC_1}{\nu_1}$, we have
\begin{equation*}
\begin{aligned}
\Psi(t)&\ge r(1-\varepsilon)e^{-Q(t)}\int_0^te^{r(1-\varepsilon)s}e^{-Ke^{-\nu_1T}}ds\\
&= e^{-r(1-\varepsilon)t}e^{Ke^{-\nu_1(t+T)}}e^{-Ke^{-\nu_1T}}\int_0^tr(1-\varepsilon)e^{r(1-\varepsilon)s}ds\\
&= e^{Ke^{-\nu_1T}(e^{-\nu_1t}-1)}(1-e^{-r(1-\varepsilon)t}).
\end{aligned}
\end{equation*}
Inserting this into \eqref{inequality of phi} and using  $e^y\ge 1+y$ for all $y\in\R$, we have, for all $(t,x)\in D_\delta$,
\begin{equation*}
\begin{aligned}
\phi(t,x)&\ge e^{Ke^{-\nu_1T}(e^{-\nu_1t}-1)}(1-e^{-r(1-\varepsilon)t})(1-C_3e^{-C_4T})(1-(1-\varepsilon)\alpha(0))+1-\varepsilon\\
&\ge (1-Ke^{-\nu_1T}(1-e^{-\nu_1t}))(1-e^{-r(1-\varepsilon)t})(1-C_3e^{-C_4T})(1-(1-\varepsilon)\alpha(0))+1-\varepsilon\\
&\ge (1-Ke^{-\nu_1T})(1-e^{-r(1-\varepsilon)t})(1-C_3e^{-C_4T})(1-(1-\varepsilon)\alpha(0))+1-\varepsilon.
\end{aligned}
\end{equation*}
Letting
$$
I_1:=1-Ke^{-\nu_1T}, \quad I_2(t):=1-e^{-r(1-\varepsilon)t}, \quad I_3:=1-C_3e^{-C_4T},
$$
we get
$$
\phi(t,x)\ge I_1I_2(t)I_3+(1-\varepsilon)(1-I_1I_2(t)I_3\alpha(0)),\quad\text{for all } (t,x)\in D_\delta. 
$$
Now observe that  $I_1I_2(t)I_3 \leq I_1 I_2(\delta ^2 c_3 T) I_3$. Furthermore some straightforward computations show that, if
\begin{equation}
\label{delta}
r(1-\ep)\delta ^2c_3<\nu _1,
\end{equation}
then $I_1 I_2(\delta ^2 c_3 T) I_3\alpha(0)\leq 1$ up to enlarging $T>0$ if necessary. As a result, for all $(t,x) \in D_\delta$, 
$$
\varphi(t,x) \geq I_1I_2(t)I_3\geq  1-K_1e^{-\nu_1T}-K_2e^{-r(1-\varepsilon)t},
$$
with some $K_1, K_2>0$. The last inequality holds since we can always choose $\nu_1<C_4$. As a conclusion, we have
\begin{equation}\label{phi-estiamte}
\phi(t,x)\geq 1-K_1e^{-\nu_1 T}-K_2e^{-r(1-\varepsilon)t},\quad\text{for all } (t,x)\in D_{\delta},
\end{equation}
provided that $\delta>0$ is sufficiently small for \eqref{delta} to hold and  $T>0$ is sufficiently large. 

In particular \eqref{phi-estiamte} implies that, for all $\vert x\vert \leq (1-\delta)c_3T$,
\begin{equation}\label{phi-estimate2}
\phi\left(\delta^2 c_3 {T},x\right) \geq  1-K_1e^{-\nu_1 {T}}- K_2 e^{-r(1-\varepsilon) \delta^2 c_3{T}}
                               \geq 1-(K_1+K_2) e^{-r(1-\varepsilon) \delta^2 c_3 {T}},
                                                                 \end{equation}
in virtue of \eqref{delta}.  On the other hand, we know from the comparison principle that $\tilde{v}(t+T,x)\geq \phi(t,x)$ for $t\geq0$ and $|x|\leq c_3 T$, which
together with \eqref{phi-estimate2} implies that
$$\tilde{v}\left(\delta^2c_3 T+T,x\right)\geq1-(K_1+K_2) e^{-r(1-\varepsilon) \delta^2 c_3 {T}},\quad \mbox{for all } |x|\leq (1-\delta)c_3 T.$$
We further take
$t=(\delta^2c_3+1)T$,
which yields 
$$\tilde{v}(t,x)\geq 1- C_2e^{-\nu_2t},\quad \mbox{for all large $t$ and $|x|\leq \frac{(1-\delta)c_3}{1+c_3\delta^2}t$},$$
where $C_2:=K_1+K_2$ and $\nu_2:=\frac{r(1-\varepsilon)\delta^2c_3}{1+c_3\delta^2}>0$. Recalling that $\tilde{v}(t,x)=v(t,x+\tilde{c}t)$ with $\tilde{c}=\frac{c_1+c_2}{2}$, that $c_3=\frac{c_2-c_1}{2}$ and since $\delta>0$ can be chosen arbitrarily small, the above estimate completes the proof of $(ii)$. 
\end{proof}

\bigskip
 
 \noindent{\bf Acknowledgement.} M. Alfaro is supported by the ANR project DEEV ANR-20-CE40-0011-01. D. Xiao is supported by the LabEx {\it Solutions Numériques, Matérielles et Modélisation pour l’Environnement et le Vivant} (NUMEV) of the University of Montpellier.

\end{document}